\def\Z{\mathbb{Z}}
\def\R{\mathbb{R}}
\def\N{\mathbb{N}}
\def\epsilon{\varepsilon}
\def\hat{\widehat}
\def\tilde{\widetilde}
\newcommand{\me}{\mathrm{e}}
\newcommand{\SE}{\setcounter{equation}{0} \section}
\newcommand{\be}{\begin{equation}}
	\newcommand{\ee}{\end{equation}}
\newcommand{\baa}{\begin{array}}
	\newcommand{\eaa}{\end{array}}
\newcommand{\ba}{\begin{eqnarray}}
	\newcommand{\ea}{\end{eqnarray}}
\newtheorem{theo}{\bf Theorem}[section]
\newtheorem{lem}[theo]{\bf Lemma}
\newtheorem{pro}[theo]{\bf Proposition}
\newtheorem{cor}[theo]{\bf Corollary}
\newtheorem{defi}[theo]{\bf Definition}
\newtheorem{rem}[theo]{\bf Remark}
\numberwithin{equation}{section}
\begin{document}
\date{}
\title{\bf{ Average speeds of time almost periodic traveling waves for rapidly/slowly oscillating reaction-diffusion equations}}
\author{Weiwei Ding\footnote{School of Mathematical Sciences, South China Normal University, Guangzhou 510631, China. dingweiwei@m.scnu.edu.cn}}
\maketitle
\begin{abstract}
This paper is concerned with the propagation dynamics of time almost periodic reaction-diffusion equations. Assuming the existence of a time almost periodic traveling wave connecting two stable steady states, we focus especially on the asymptotic behavior of average wave speeds in both rapidly oscillating and slowly oscillating environments. We prove that, in the rapidly oscillating case, the average speed converges to the constant wave speed of the homogenized equation; while in the slowly oscillating case, it approximates the arithmetic mean of the constant wave speeds for a family of equations with frozen coefficients. In both cases, we provide estimates on the convergence rates showing that, in comparison to the limiting speeds, the deviations of average speeds for almost periodic traveling waves are at most linear in certain sense.  Furthermore, our explicit formulas for the limiting speeds indicate that temporal variations have significant influences on wave propagation. Even in periodic environments, it can alter the propagation direction of bistable equations.
\vskip 2mm
\noindent{\small{\it Keywords}: reaction-diffusion equations; almost periodic traveling wave; average speed, slowly/rapidly oscillating media.}
\end{abstract}

%%%%%%%%%%%%%%%%%%%%%%%%%%%%%%%%%%%%%%%%
%%%%%%%%%%%%%%%%%%%%%%%%%%%%%%%%%%%%%%%%

\SE{Introduction and main results}
\subsection{Introduction}
In this paper, we study the one-dimensional reaction-diffusion equation of type
\begin{equation}\label{eq-main}
u_t=u_{xx}+f\left(t/T,u\right),\ \ t\in\R,\ x\in\R,
\end{equation}
with parameter $T>0$. The function $f:\R\times \R \to\R,\ (t,u)\mapsto f(t,u)$ is of class $C^2$, and $f$, $\partial_u f$ and $\partial_{uu} f$ are bounded in $\R\times E$ for any bounded set $E\subset \R$. These assumptions of $f$ as well as the following two will be understood throughout the paper without further mention.

\begin{itemize}
\item[(A1)] $f(t,u)$ is almost periodic in $t$ uniformly with respect to $u$ in bounded sets (we will recall the definition of almost periodic function and some basic properties in Section 2.1).
\end{itemize}

\begin{itemize}
\item[(A2)] $f(t,0)=f(t,1)=0$ for all $t\in\R$, and $0$ and $1$ are uniformly (in $t$) stable zeroes of $f(t,\cdot)$, in the sense that there exist $\gamma_0>0$ and $\delta_0\in(0,1/2)$ such that
\begin{equation}\label{asspars}\left\{\baa{ll}
f(t,u)\le-\gamma_0 u & \hbox{for all }\,(t,u)\in\R\times[-\delta_0,\delta_0],\vspace{5pt}\\
f(t,u)\ge\gamma_0(1-u) & \hbox{for all }\,(t,u)\in\R\times[1-\delta_0,1+\delta_0].\eaa\right.
\end{equation}
\end{itemize}

The assumption (A2) implies in particular that $\max\{\partial_u f(t,0), \partial_u f(t,0) \}\leq -\gamma_0$ for all $t\in\R$, and that $u^+=1$ and $u^-=0$ are two stable solutions of the corresponding ODE
\begin{equation}\label{ode-f}
	\frac{du}{dt}=f\left(t/T,u\right)
\end{equation}
in the sense that $u(t;u_0)- u^{\pm}\to 0$ as $t\to+\infty$ uniformly for $u_0\in \R$ with $|u_0-u^{\pm}|\leq \delta_0$, where $u(t;u_0)$ is the solution of \eqref{ode-f} with initial value $u_0$.

Equation \eqref{eq-main} arises in modeling a variety of physical and biological phenomena, such as population dynamics, gene developments, phase transitions, see e.g., \cite{abc,aw,fm,shen1,sk,xin00}. The time almost periodic dependence of the forcing term $f(t/T,u)$ with respect to $t$ allows one to take into account general temporal oscillations in natural environments, where $1/T$ characterizes the frequency of these oscillations. The main objective of this paper is to establish the asymptotic limits as $T\to 0^+$ and as $T\to +\infty$ of some fundamental spreading quantities associated with almost periodic traveling wave solutions of \eqref{eq-main}.

Traveling wave is of great importance to understand the evolution process in reaction-diffusion equations, as it is the key notion for the description of the invasion of a state by another one (here, the states are the constants $0$ and $1$).
\begin{defi}\label{de-al-tw}
By {\it an almost periodic traveling wave solution} connecting $0$ and $1$ for \eqref{eq-main}, we mean an entire solution $u_T:\R\times\R\to [0,1]$ such that there are $U_T\in C^{1,2}_{t,x}(\R\times \R)$ and $c_T\in C^1(\R)$ such that
\begin{equation*}\left\{\baa{ll}
	u_T(t,x)=U_T(t,x-c_T(t))& \hbox{for }\, t\in\R,\, x\in\R, \vspace{5pt}\\
	U_T(t,-\infty)=1,\,\, U_T(t,+\infty)=0 & \hbox{uniformly for }\, t\in\R,\eaa\right.
\end{equation*}
and that $U_T(t,x)$ (called the {\it wave profile}) is almost periodic in $t$ locally uniformly with respect to $x$, and $c_T'(t)$ (called the {\it wave speed}) is almost periodic in $t$. 
% If $c_T(t)$ is a bounded function, then $u_T(t,x)$ is called an almost periodic standing wave solution.
\end{defi}

The notion of almost periodic traveling wave was introduced by Shen \cite{shen1,shen2}. It is a natural extension of a more classical notion of time periodic traveling wave in the case where $f(t,u)$ depends periodically on $t$. More precisely, {\it a time periodic traveling wave solution} of \eqref{eq-main} is defined to be a solution of form $u_T(t,x)=V_T(t,x-d_Tt)$, where the wave profile $V_T(t,x)$ changes periodically in $t$ with period $lT$ and the speed $d_T$ is a constant (see \cite{abc}), provided that $f(t,u)$ is $l$-periodic in $t$. It is well known that (see e.g., \cite{shen1,shen4}), in the periodic case, if $u_T(t,x)=U_T(t,x-c_T(t))$ is an almost periodic traveling wave of \eqref{eq-main}, then $c'_T(\cdot)$ and $U_T(t,x)$ are necessarily periodic in $t$ with the same period $lT$, and hence, $u_T(t,x)=V_T(t,x-d_Tt)$ is a periodic traveling wave, where $d_T=1/(lT)\int_0^{lT}c'_T(t)dt$ and $V_T(t,x)=U_T(t,x-c_T(t)+d_Tt)$.
Clearly, when $f$ is independent of $t$, both almost periodic traveling wave and periodic traveling wave simplify to the classical notion of traveling wave, that is a solution takes the form $u(t,x)=\phi(x-ct)$ for some constant $c$ and some one variable function $\phi(x)$. It worth mentioning that the constant speed $c$ has the same sign as that of $\int_0^1 f(u)du$, while in the periodic case, determining the sign of wave speed $d_T$ remains an open problem.

An almost periodic traveling wave solution of \eqref{eq-main} was proved to exist in the case where $f$ is of bistable type in the sense that, in addition to the conditions (A1)-(A2), \eqref{ode-f} is assumed to have a unique almost periodic solution lying strictly between $0$ and $1$, and it is unstable (see \cite{shen2,shen4}). Note that almost periodic traveling wave solutions may exist when $f$ is of {\it multistable type} in the sense that \eqref{ode-f} may have other stable almost periodic solutions lying between $0$ an $1$. For such existence results, we refer to \cite{fm,dll} in the autonomous case (see also Proposition \ref{multi-existence} below) and \cite{dm} in the periodic case. In this paper, we assume the existence in almost periodic case, as stated below.

\begin{itemize}
\item[(A3)]
For each $T>0$, problem \eqref{eq-main} admits an almost periodic traveling wave solution $u_T(t,x)=U_T(t,x-c_T(t))$ connecting $0$ and $1$.
\end{itemize}

Under the above assumptions, it has been proved in \cite{shen1,shen3} that the wave profile $U_T(t,x)$ is unique up to spatial shifts by $C^1$ time almost periodic functions, decreasing in $x$ and the traveling wave solution $u_T(t,x)$ is globally stable. As these results will be frequently used in the present work, for clarity, we summarize them in the following theorem.

\begin{theo}\label{th-known} {\rm (\cite{shen1,shen3})}
Let {\rm (A1)-(A3)} hold, and for each $T>0$, let $u_T(t,x)=U_T(t,x-c_T(t))$ be an almost periodic traveling wave solution of \eqref{eq-main} connecting $0$ and $1$. The following statements hold true:
\begin{itemize}
	
\item [{\rm (i)}] (Monotonicity) $\partial_x U_T(t,x)<0$ for all $t\in\R$, $x\in\R$;

\item [{\rm (ii)}] (Stability) There exists $\delta'\in (0,\delta_0)$ such that for any solution $u(t,x)$ of
\eqref{eq-main}  satisfying
\begin{equation}\label{initial-s}
	U_T(\tau,x-c_T(\tau)-\zeta^1)-\delta' \leq  u(\tau,x) \leq U_T(\tau,x-c_T(\tau)-\zeta^2)+\delta' \, \hbox{ for all }\, x\in\R,
\end{equation}
for some $\tau \in\R$ and $\zeta^1\leq \zeta^2$ in $\R$, there holds
\begin{equation}\label{stability}
	\sup_{x\in\R} |u(t,x)-U_T(t,x-c_T(t)-\xi^*_T)|\to 0 \,\, \hbox{ as }\,\,t\to+\infty,
\end{equation}
for some constant $\xi^*_T\in\R$ depending on $T$;

\item [{\rm (iii)}] (Uniqueness) If \eqref{eq-main} admits another almost periodic traveling wave solution $\tilde{u}_T(t,x)=\tilde{U}_T(t,x-\tilde{c}_T(t))$ connecting $0$ and $1$, then there exists a $C^1$ almost periodic function $t\to\zeta_T(t)$ such that
$$U_T(t,x)=\tilde{U}_T(t,x-\zeta_T(t)) \quad \hbox{and}\quad  c_T'(t)=\tilde{c}_T'(t)-\zeta_T'(t) $$
for $t\in\R$, $x\in\R$.
\end{itemize}
\end{theo}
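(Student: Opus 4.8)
These are by now classical properties of bistable fronts, going back to \cite{shen1,shen3}; the plan is to run the standard maximum‑principle machinery, handling almost periodicity through hull and normal‑family arguments in place of the period map available in the periodic case. I would first record three preliminaries. \emph{Parabolic regularity:} $U_T,\partial_tU_T,\partial_xU_T,\partial_{xx}U_T$ are bounded and uniformly continuous on $\R^2$. \emph{Exponential ends:} picking $x_0$ so large that $u_T(t,x)\le\delta_0$ for all $t$ and $x\ge x_0$, and comparing $u_T$ with $\delta_0\me^{-\sqrt{\gamma_0}(x-x_0)}$ on $\{x\ge x_0\}$ via (A2) (symmetrically near $-\infty$), yields $0<u_T(t,x)\le C\me^{-\mu|x|}$ for $x\ge0$ and $0<1-u_T(t,x)\le C\me^{-\mu|x|}$ for $x\le0$, with the same bound for $|\partial_xu_T|$ by interior estimates. \emph{Compactness:} along any $s_n$, the shifts $\bigl(f(\cdot+s_n,\cdot),\,U_T(\cdot+s_n,\cdot),\,c_T'(\cdot+s_n)\bigr)$ have $C_{loc}$‑subsequential limits forming an almost periodic traveling wave of an equation in the hull of \eqref{eq-main}, and by the recurrence of almost periodic functions one may choose $s_n\to+\infty$ along which all these limits return to the original data (up to a spatial shift of the wave).

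\noindent\textbf{(i) Monotonicity.} For this I would use the sliding method. Fix $\xi>0$ and set $v(t,x)=u_T(t,x-\xi)-u_T(t,x)$, which solves $v_t=v_{xx}+a(t,x)v$ with $a$ bounded and, by (A2), $a\le-\gamma_0$ for all $t$ once $|x|$ is large; using the exponential ends, $v\to0$ as $|x|\to\infty$ uniformly in $t$. Let $\kappa^*=\inf\{\kappa\ge0:v\ge-\kappa\text{ on }\R^2\}\in[0,1]$. If $\kappa^*>0$, then $w=v+\kappa^*\ge0$ with $\inf w=0$ approached on a bounded $x$‑range, so the compactness above produces a nonnegative solution $\bar w$ of a limiting linear parabolic equation vanishing at an interior point; by the strong maximum principle $\bar w\equiv0$ for $t\le0$, hence $\bar u_T(t,x-n\xi)-\bar u_T(t,x)\equiv-n\kappa^*$ for every $n\ge1$, impossible since both sides are bounded. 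Thus $\kappa^*=0$ for every $\xi>0$, i.e.\ $\partial_xu_T\le0$; applying the strong maximum principle to $\partial_xu_T$ (a nontrivial solution of the linearized equation) upgrades this to $\partial_xU_T<0$.

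\noindent\textbf{(ii) Stability.} I would first establish a uniform non‑degeneracy: for each $\eta>0$ there is $\beta_\eta>0$ with $-\partial_xU_T(t,x)\ge\beta_\eta$ whenever $U_T(t,x)\in[\eta,1-\eta]$ — such a level set forces $|x|$ to stay bounded uniformly in $t$, so a failure would produce (via the compactness) a monotone limit profile connecting $0$ and $1$ with an interior zero of its $x$‑derivative, contradicting (i). Then, in the frame $y=x-c_T(t)$, I would build the barriers $w^\pm(t,x)=U_T\bigl(t,\,y\mp\xi(t)\bigr)\pm q(t)$ with $q(t)=q_0\me^{-\beta t}$ and $\dot\xi(t)=Kq(t)$: for $\beta\le\gamma_0/2$ and $K$ large (depending on $\beta_{\delta_0/2}$ and the Lipschitz bound of $f$), a direct computation using the profile equation, the sign conditions (A2) near the ends, and the non‑degeneracy on the intermediate range shows $w^+$ is a supersolution and $w^-$ a subsolution of \eqref{eq-main}. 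Choosing $q_0=\delta'$ small and $\xi(\tau)$ in terms of $\zeta^1,\zeta^2$, hypothesis \eqref{initial-s} gives $w^-(\tau,\cdot)\le u(\tau,\cdot)\le w^+(\tau,\cdot)$, hence $w^-\le u\le w^+$ for $t\ge\tau$. As $q(t)\to0$ and $\xi(t)$ converges, $u$ gets pinched between two translates of $U_T$; a further squeeze restarted from a late time, with the new $q_0$ proportional to the current gap so that the gap contracts, collapses the two translates to a single $\xi^*_T$, which is \eqref{stability}.

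\noindent\textbf{(iii) Uniqueness.} Given $\tilde u_T(t,x)=\tilde U_T(t,x-\tilde c_T(t))$, the uniform limits of $\tilde U_T$ together with $\partial_xU_T<0$ let me choose, at $\tau=0$, constants $\zeta^1\le\zeta^2$ with $U_T(0,\cdot-c_T(0)-\zeta^1)-\delta'\le\tilde u_T(0,\cdot)\le U_T(0,\cdot-c_T(0)-\zeta^2)+\delta'$; part (ii) then gives $\sup_x\bigl|\tilde u_T(t,x)-U_T(t,x-c_T(t)-\xi^*_T)\bigr|\to0$ as $t\to+\infty$. Evaluating this along $t+s_n$ with $s_n\to+\infty$ chosen (via the compactness) so that $U_T(\cdot+s_n,\cdot)$, $\tilde U_T(\cdot+s_n,\cdot)$, $c_T'(\cdot+s_n)$, $\tilde c_T'(\cdot+s_n)$ all recur turns the asymptotic statement into the exact identity $\tilde U_T(t,x)=U_T(t,x-\zeta_T(t))$; monotonicity makes $\zeta_T(t)$ the unique root of $U_T(t,-\zeta_T(t))=\tilde U_T(t,0)$, so $\zeta_T$ inherits $C^1$ regularity and almost periodicity from the profiles. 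Finally, substituting $U_T(t,x)=\tilde U_T(t,x-\zeta_T(t))$ into the profile equations $\partial_tU_T-c_T'\partial_xU_T=\partial_{xx}U_T+f(t/T,U_T)$ and its analogue for $\tilde U_T$ and using $\partial_x\tilde U_T<0$ yields $c_T'(t)=\tilde c_T'(t)-\zeta_T'(t)$.

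\noindent\textbf{Main obstacle.} The real work is in (ii): the uniform lower bound on $-\partial_xU_T$ over intermediate level sets, and — more seriously — the second, contracting squeeze that merges the upper and lower limit translates into one $\xi^*_T$; and, throughout, substituting the hull/recurrence arguments for the period map since the time axis is not compact. The remainder is routine maximum‑principle bookkeeping once these are in place.
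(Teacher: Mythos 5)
The paper does not prove Theorem~\ref{th-known}; it imports it from Shen's works, citing \cite[Theorem 4.1]{shen1} for monotonicity and \cite[Theorem 6.1]{shen3} for stability and uniqueness, with footnotes identifying the techniques (``sliding arguments'' for (i), ``squeezing arguments'' for (ii)) and noting only minor extensions (bistable to multistable, local to global stability). Your reconstruction hits exactly those techniques: sliding plus hull compactness plus strong maximum principle for (i), a Fife--McLeod style squeezing barrier $U_T(t,y\mp\xi(t))\pm q(t)$ with exponentially decaying $q$ and a contraction restarted from a late time for (ii), and asymptotic convergence upgraded to exact equality via recurrence for (iii). So the proposal is consistent with the approach the paper attributes to the cited references, and the places you single out as the real work (the uniform non-degeneracy of $-\partial_x U_T$ on intermediate level sets and the merging of the two limiting translates) are indeed where the substance lies in Shen's argument.

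One fine point worth flagging in your preliminary compactness step: the claim that along a suitable $s_n\to+\infty$ ``all these limits return to the original data'' is not a consequence of the almost periodicity of $f$ alone — you need the almost periodicity of $U_T(t,\cdot)$ and $c_T'$, which is built into Definition~\ref{de-al-tw}, so there is no circularity; but it is cleaner to say this explicitly, since otherwise the recurrence used in (iii) could look like it quietly presupposes the uniqueness you are proving. In Shen's framework this is handled via the skew-product semiflow on the hull, which makes the recurrence of the pair $(f,U_T)$ transparent.
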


More precisely, we refer to \cite[Theorem 4.1]{shen1} and \cite[Theorem 6.1 (2)]{shen3} for the monotonicity\footnote{Although the main results in \cite{shen1} were proved under the assumption that $f$ is of  bistable type, one can check that the sliding arguments used to prove the monotonicity can be easily extended to the multistable case with some trivial modifications.} and the uniqueness of time almost periodic traveling wave solutions, respectively. The stability follows directly from \cite[Theorem 6.1 (1)]{shen3} and its proof\footnote{\cite[Theorem 6.1]{shen3} stated the local stability of time almost periodic traveling wave solutions (i.e., \eqref{initial-s} is assumed to hold with $\zeta^1=\zeta^2$),  but the squeezing arguments used in the proof actually showed the global stability as stated above.}. We point out that, apart from the aforementioned works, time almost periodic traveling waves, as well as more generalized transition waves in the sense of \cite{bh12}, were extensively investigated for equation \eqref{eq-main} with monostable and combustion nonlinearities, see e.g., \cite{hs,nr1,shen5,shen6,ss} and references therein. We also refer to \cite{m,nr2,lmn,clou,liang} towards the study of almost periodic traveling waves in spatially heterogeneous media.

Let us now introduce an important notion concerned with the propagating of time almost periodic traveling waves.
For each $T>0$, denote by $\bar{c}_T$ the {\it average speed} of $c'_T(t)$, that is,
\begin{equation}\label{mean-speed}
	\bar{c}_T=\lim_{t\to+\infty} \frac{1}{t}\int_s^{t+s} c'_T(\tau)d\tau,
\end{equation}
where the convergence is understood to be uniform in $s\in\R$. Since $c'_T(\cdot)$ is an almost periodic function, it is clear that the limit $\bar{c}_T$ exists independently of $s$. It is also easily seen that, in the periodic case, $\bar{c}_T$ is the constant wave speed of time periodic traveling wave. Furthermore, by Theorem 1.1 (iii), $\bar{c}_T$ is the unique average speed of equation \eqref{eq-main}.
This leads to an interesting question: how the average speed $\bar{c}_T$ depends on the oscillation parameter $T$ ? This problem is important in many physical and biological phenomena, but very little is known so far, even in the periodic case. To study this problem, let us first present some trivial observations when $f$ is of Fisher-KPP type or of a special bistable type. 

Firstly, we consider the Fisher-KPP case, that is, instead of (A2)-(A3), $f(t,u)$ is assumed to satisfy
$f(\cdot,0)=f(\cdot,1)=0$ in $\R$ and
\begin{equation}\label{fisher-kpp}
	0<f(t,u)\leq \partial_uf(t,0)u \,\hbox{ for all } \, (t,u)\in\R\times (0,1).
\end{equation}
In this case, for each $T>0$, problem \eqref{eq-main} admits a spreading speed, denoted by $c_T^*$, which characterizes the spreads of solutions of the Cauchy problem with compactly supported initial data (see \cite{hs,shen5}). This speed is linearly determined, and has the following variational formula
\begin{equation*}%\label{spreding}
c_T^*= \inf_{\mu>0} \frac{ \displaystyle \lim_{t\to+\infty} \frac{1}{t}\int_0^t \left(\partial_uf(\tau/T,0)+\mu^2\right)d\tau }{\mu}=\inf_{\mu>0} \frac{ \displaystyle \lim_{t\to+\infty} \frac{1}{t}\int_0^t \partial_uf(\tau,0)d\tau+\mu^2 }{\mu}.
\end{equation*}
Furthermore, it was proved in \cite{shen6} that there exists an almost periodic traveling wave solution with average speed $c$ if $c> c_T^*$, and such a solution does not exist if $c<c_T^*$ (the problem whether there exists a wave with the critical speed $c_T^*$ remains unclear). Clearly, by the above variational formula, $c_T^*$ is independent of $T$. This means that, for Fisher-KPP equation, temporal variations have no influences on the spreading speed.

Similar independence result can also be observed in some special bistable equations, although the unique average speed of bistable traveling wave is not linearly determined. Consider, for example, the following equation with a special cubic nonlinearity
\begin{equation}\label{bis-1-eq}
u_t=u_{xx}+u(1-u)(u-b(t/T)),\ \ t\in\R,\ x\in\R,  	
\end{equation}
where $b\in C^1(\R)$ is an almost periodic function with $0<\inf_{t\in\R} b(t) \leq \sup_{t\in\R}b(t)<1$.
It is straightforward to check that for each $T>0$,
$$u_T(t,x)=\psi_0\left( x- \int_0^t \sqrt{2}\left(\frac{1}{2}-b\left(\frac{s}{T}\right)\right)ds \right) $$
is an almost periodic traveling wave solution of \eqref{bis-1-eq} with wave speed $c'_T(t)=\sqrt{2}(1/2-b(t/T))$, where $\psi_0$ is a one variable function{\footnote{The function $\psi_0(\cdot)$ is actually the unique (up to shifts) wave profile of traveling wave solution for the homogeneous equation $u_t=u_{xx}+u(1-u)(u-b_0)$, where $b_0\in (0,1)$ is a constant. The associated wave speed is $\sqrt{2}(1/2-b_0)$.}} given by $\psi_0(x)=1/(1+\me^{x/\sqrt{2}})$ for $x\in\R$. 
As a consequence, the corresponding average speed is given by
$$\bar{c}_T= \lim_{t\to+\infty} \frac{1}{t} \int_0^t\sqrt{2}\left(\frac{1}{2}-b(s)\right)ds,$$
and clearly, it is independent of $T>0$.

%\begin{equation}\label{profile-psi0}
%\psi_0(x)=\frac{1}{1+\me^{x/\sqrt{2}}} \ \ \hbox{ for }\,x\in\R.
%\end{equation}

In light of the above observations, it is natural to ask whether $\bar{c}_T$ is independent of $T$ in general bistable or multistable equations. We will show that this is not true in general, by determining the asymptotic limits of $\bar{c}_T$ as $T\to 0^+$ and as $T\to+\infty$, respectively. As matter of matter, one can see from our main results  (see Theorems \ref{th-rapid} and \ref{th-slow} below) that the forms of the two limiting speeds are different.
Furthermore, at the end of this section, we will provide a time periodic bistable example demonstrating that the limiting speeds exhibit opposite signs, which means that temporal variations can even alter the direction of wave propagation.

Regarding the problem of the existence of limits for average speeds, to our knowledge, only some partial results in the periodic case are available in the literature. More precisely, the homogenization limit (as $T\to 0^+$) of bistable wave speed was determined in \cite{contri}, while the question whether the speed converges as $T\to+\infty$ remains open. On the other hand, for temporally homogeneous but spatially periodic equation 
\begin{equation}\label{eq-space}
		w_t=w_{xx}+f\left(x/T,w\right),\ \ t\in\R,\ x\in\R,
\end{equation}
where $f(x,\cdot)$ is periodic in $x$, the problem has been better understood. As a matter of fact, the limits of wave speeds for pulsating waves (which is an extension of traveling waves in spatially periodic media, analogously to time periodic traveling waves defined above) as $T\to 0^+$ or as $T\to+\infty$ were established for equations with various types of nonlinearties, and some interesting consequences regarding the effects of spatial heterogeneity were obtained, see e.g., \cite{ehr,sk,hfr,hnr,he,hps,dhl,dhz2,dhz1,dll}.
Note that the aforementioned works are all concerned with periodic equations. Our main results for the limits of average wave speeds appear to be the first one under almost periodic setting. As we will elaborate below, new approaches are needed to address the almost periodic problems.

\subsection{Asymptotic limit of average speed in rapidly oscillating environment}
Let us first establish the homogenization limit of the average speed $\bar{c}_T$ under the assumption of the existence of traveling wave for the homogenized equation. More precisely,
denoting by $\bar{f}$ the average of the function $f(t,u)$ with respect to $t$, that is,
\begin{equation}\label{mean-f}
	\bar{f}(u)=\lim_{t\to+\infty} \frac{1}{t}\int_0^t f(\tau,u)d\tau\,\,\hbox{ for }\, u\in\R,
\end{equation}
we consider the following homogeneous equation
\begin{equation}\label{mean-eq}
	u_t=u_{xx}+\bar{f}(u),\ \ t\in\R,\ x\in\R.
\end{equation}
We assume that

\begin{itemize}
	\item[(A4)] Equation \eqref{mean-eq} admits a traveling wave solution $u(t,x)=\phi_0(x-c_0t)$ connecting $0$ and $1$, that is, $(\phi_0,c_0)$ satisfies
	\begin{equation}\label{tw-homo}\left\{\baa{l}
		\phi_0''(\xi)+c_0\phi_0'(\xi)+\bar{f}(\phi_0(\xi))=0 \quad\hbox{for }\,\,\xi\in\R, \vspace{5pt}\\
	    \phi_0(-\infty)=1,\quad \phi_0(+\infty)=0. \eaa\right.
	\end{equation}
\end{itemize}
Thanks to (A2), it is clear that $0$ and $1$ are two linearly stable zeros of $\bar{f}$. It is well known (see e.g., \cite{fm}) that the wave profile $\phi_0$ is unique (up to shifts) and decreasing, the wave speed $c_0$ is unique, and the wave $\phi_0(x-c_0t)$ is globally stable.

\begin{theo}\label{th-rapid}
Let {\rm (A1)-(A4)} hold, and let $\bar{c}_T$ be the average speed defined in \eqref{mean-speed} and $c_0$ be the unique wave speed of \eqref{tw-homo}. Then there holds 
\begin{equation}\label{con-cT-c0}
	\lim_{T\to 0^+} \bar{c}_T=c_0.
\end{equation}
Furthermore, if 
 \begin{equation}\label{assume-F} 
	F(t,u), \,\partial_u F(t,u),\, \partial_{uu} F(t,u) \hbox{ are almost periodic in $t$ uniformly for $u$ in bounded sets,}  
\end{equation}
 where the function $F:\R\times\R\to \R$ is defined by
\begin{equation}\label{define-F}
	F(t,u)=\int_0^t (f(\tau,u)-\bar{f}(u)) d\tau \ \ \hbox{ for }\, t\in\R,\, u\in\R,
\end{equation}
   then there exists a constant $M_1>0$ independent of $T$ such that
    \begin{equation}\label{es-con-rapid}
    	|\bar{c}_T-c_0 | \leq M_1T  \,\,\hbox{ for all small }\,\, T>0.
    \end{equation}
\end{theo}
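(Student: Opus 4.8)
The plan is to reduce the statement to controlling the interface of one particular solution and then to trap that interface between two explicit fronts. For fixed $T>0$ let $v_T$ solve \eqref{eq-main} with $v_T(0,\cdot)=\phi_0$. Since $\phi_0$ and $U_T(0,\cdot)$ are both decreasing profiles joining $1$ to $0$, one can choose $\zeta^1\le\zeta^2$ (depending on $T$, which is the only place $T$-dependent shifts are allowed) so that \eqref{initial-s} holds at $\tau=0$ with $u=v_T$; then Theorem \ref{th-known}(ii) gives $\sup_{x\in\R}|v_T(t,x)-U_T(t,x-c_T(t)-\xi_T^*)|\to0$ as $t\to+\infty$, so the level set $\{v_T(t,\cdot)=1/2\}$ stays within a bounded distance of $c_T(t)$. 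Since $\bar c_T=\lim_{t\to\infty}c_T(t)/t$ by \eqref{mean-speed}, it then suffices to show that this level set lies in $\{\,|x-c_0t|\le M_1Tt+C\,\}$ for small $T$ (for \eqref{es-con-rapid}), resp. in $\{\,|x-c_0t|\le\varepsilon t+C_\varepsilon\,\}$ for every $\varepsilon>0$ (for \eqref{con-cT-c0}).

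For the rate \eqref{es-con-rapid}, with constants $\lambda,q>0$ to be fixed I would use the two-scale ansatz
\[
 w^\pm(t,x)=\phi_0(\xi^\pm)+T\,F\!\left(t/T,\phi_0(\xi^\pm)\right)\pm qT,\qquad \xi^\pm=x-(c_0\pm\lambda T)\,t,
\]
and claim $w^+$ (resp. $w^-$) is a super- (resp. sub-) solution of \eqref{eq-main} once $T$ is small. Writing $s=t/T$ and expanding $\mathcal N[w]:=w_t-w_{xx}-f(s,w)$ in powers of $T$: using $\phi_0''=-c_0\phi_0'-\bar f(\phi_0)$ and $\partial_tF(t,u)=f(t,u)-\bar f(u)$, the $O(1)$ terms cancel identically, and the coefficient of $T$ in $\mathcal N[w^+]$ equals $-\lambda\phi_0'(\xi^+)-q\,\partial_uf(s,\phi_0(\xi^+))+A(s,\xi^+)$, where $A$ gathers the terms $-\partial_{uu}F\,(\phi_0')^2+\partial_uF\,\bar f(\phi_0)-\partial_uf\,F$ evaluated along $\phi_0$; here \eqref{assume-F} is used precisely so that $F,\partial_uF,\partial_{uu}F$ are almost periodic in $t$, hence bounded, giving $|A|\le C_0$ uniformly. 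I would then fix $\rho>0$ small with $\partial_uf(s,v)\le-\gamma_0/2$ for $v\in[0,\rho]\cup[1-\rho,1]$, and $R$ with $\phi_0(\xi)\in[1-\rho,1]$ for $\xi\le-R$ and $\phi_0(\xi)\in[0,\rho]$ for $\xi\ge R$: on $|\xi|\ge R$ the term $-q\,\partial_uf\ge q\gamma_0/2$ beats $C_0$ once $q$ is large, and on $|\xi|\le R$, where $|\phi_0'|\ge\kappa>0$, the term $-\lambda\phi_0'\ge\lambda\kappa$ beats $q\|\partial_uf\|_\infty+C_0$ once $\lambda$ is large; with $q$ then $\lambda$ fixed independently of $T$, the coefficient of $T$ is $\ge1$ everywhere, so $\mathcal N[w^+]\ge T-C_1T^2\ge0$, and the analogous computation gives $\mathcal N[w^-]\le-T+C_1T^2\le0$, for $T$ small. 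Since $F(0,\cdot)\equiv0$ and $F(\cdot,0)=F(\cdot,1)=0$, we get $w^-(0,\cdot)=\phi_0-qT\le\phi_0=v_T(0,\cdot)\le\phi_0+qT=w^+(0,\cdot)$ and $w^\pm-v_T\to qT>0$ at $\pm\infty$, so the comparison principle on $\R$ forces $w^-\le v_T\le w^+$ for all $t\ge0$. Locating where $w^\pm=1/2$ then confines $\{v_T(t,\cdot)=1/2\}$ to $[(c_0-\lambda T)t-R^*,(c_0+\lambda T)t+R^*]$, and the reduction above yields $|\bar c_T-c_0|\le\lambda T$, i.e. \eqref{es-con-rapid} with $M_1=\lambda$.

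The unconditional limit \eqref{con-cT-c0} needs a different argument, since without \eqref{assume-F} the antiderivative $F$ may be unbounded and the corrector above is unavailable. Here I would invoke the averaging principle: on a time window of any fixed length $\Theta$, the solution of \eqref{eq-main} issued from a fixed datum converges, uniformly as $T\to0^+$, to the solution of the homogenized equation \eqref{mean-eq} with the same datum, because $\frac1t\int_0^tf(\tau/T,u)\,d\tau\to\bar f(u)$ as $T\to0$ uniformly for $u$ in bounded sets. Fixing $\varepsilon>0$, choosing $\Theta$ large enough that the global stability of $\phi_0(x-c_0t)$ for \eqref{mean-eq} makes small perturbations of translates of $\phi_0$ relax within time $\Theta$ to such a translate, with a contraction factor $<1$ and a bounded forward drift $\beta_0$, and then $T$ small, I would propagate by induction over the windows $[k\Theta,(k+1)\Theta]$ the bound $v_T(k\Theta,\cdot)\le\phi_0(\cdot-c_0k\Theta-k\beta_0)+\rho$ for a fixed small $\rho$; this puts the interface of $v_T$ at time $k\Theta$ to the left of $c_0k\Theta+k\beta_0+O(1)$, so, dividing by $k\Theta$ and using the reduction above, $\bar c_T\le c_0+\beta_0/\Theta\le c_0+\varepsilon$ for $T$ small, and the lower bound is symmetric.

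I expect the main obstacle to be the first-order balance in the second step: the corrector $TF(t/T,\phi_0)$ removes the $O(1)$ oscillation exactly, but the $O(T)$ remainder still contains genuinely oscillating, merely bounded cross-terms such as $\partial_uf(t/T,\phi_0)\,F(t/T,\phi_0)$ and $\partial_{uu}F(t/T,\phi_0)(\phi_0')^2$ whose sign cannot be controlled pointwise; one has to split $\R$ into a bounded core, where the translation rate $\lambda$ is played against the uniform positivity of $-\phi_0'$, and its complement, where the vertical margin $q$ is played against the uniform negativity of $\partial_uf$ at $0$ and $1$, patching the two regimes via the exponential decay of $\phi_0'$. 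A secondary point, needed throughout, is a uniform-in-$T$ a priori control of $\|c_T'\|_{L^\infty}$, of the steepness of $U_T$ and of $\|U_T\|_{C^1_x}$ (obtained from crude fronts travelling at speeds $c_0\pm K$, parabolic interior estimates, and the uniqueness in Theorem \ref{th-known}); and in the third step one must ensure that the averaging errors — small only on windows of bounded length — do not accumulate over the infinite horizon defining $\bar c_T$, which is exactly why the induction must lean on the exponential-in-time contraction furnished by the stability of $\phi_0$.
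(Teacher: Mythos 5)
Your reduction (part 1) and your proof of the rate \eqref{es-con-rapid} under \eqref{assume-F} (part 2) follow essentially the same strategy as the paper: perturb $\phi_0$ by the corrector $TF(t/T,\cdot)$ plus a vertical margin and a drifting shift, verify the sub-/super-solution inequality by balancing the drift against $-\phi_0'$ on a compact core and the vertical margin against the negativity of $\partial_u f$ near $u=0,1$, then trap the solution of \eqref{cauchy-rapid} and invoke Theorem~\ref{th-known}(ii) together with the normalization \eqref{normal} to locate $c_T(t)$. The cosmetic difference is that the paper uses a time-decaying $q_T(t)$ (starting at a fixed $\epsilon_1$ and relaxing to $O(T)$) and a nonlinear shift $\eta_T(t)$, while you use a constant margin $qT$ and a linear drift $\lambda T t$; both satisfy the initial ordering and both yield the factor $O(T)$ when dividing by $t\to\infty$, so this is a valid, slightly cleaner realization of the same construction. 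Your $O(T)$-coefficient computation, the identification of the oscillatory remainder $A = \partial_u F\,\bar f(\phi_0) - \partial_{uu}F\,(\phi_0')^2 - \partial_u f\,F$, and the two-region argument match the paper's \eqref{sup-rapid-ntx} and cases (a)--(c).

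Where you genuinely diverge is the unconditional convergence \eqref{con-cT-c0} without \eqref{assume-F}. The paper keeps the very same super-/sub-solution machinery but replaces $F$ by the mollified antiderivative $F_T(t,u)=\int_{-\infty}^t \me^{-T(t-\tau)}(f(\tau,u)-\bar f(u))\,d\tau$, whose key properties (boundedness, almost periodicity, and $T F_T,\,T\partial_u F_T,\,T\partial_{uu}F_T\to 0$ uniformly as $T\to0^+$) are imported from Fink \cite[Lemmas 14.1--14.2]{fink}; this gives $|\bar c_T - c_0|\le C\,\tilde K_T$ with $\tilde K_T\to0$, and the proof of Theorem~\ref{restate-th} carries over verbatim. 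You instead propose a finite-window averaging argument combined with an iteration driven by the exponential stability of $\phi_0$. This route is plausible and more in the spirit of classical averaging, but it leans on an unproven PDE averaging lemma (uniform-in-$T$, uniform-in-phase convergence of $v_T$ to the homogenized solution on compact time windows, uniform over the relevant family of initial data) and on a drift-plus-contraction bookkeeping whose uniformity you do not verify; in the paper's route these difficulties are subsumed into the single elementary Lemma~\ref{lem-FT} and do not reappear. The paper's approach therefore has the advantage of unifying the two cases under one construction, while yours, if the averaging lemma were supplied, would illuminate why the convergence holds even when the corrector $F$ is unbounded, at the cost of a substantially longer iteration argument. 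Neither sub-argument in your sketch is wrong, but as written the third step is not yet a proof.
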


The convergence of average speed in \eqref{con-cT-c0} extends the homogenization result in \cite{contri} for periodic problem to almost periodic one. However, it seems difficult to employ the method used in \cite{contri} to address our almost periodic problem. To explain that new ideas are needed, let us briefly recall the strategy of the proof in \cite{contri} (a similar idea also appeared in an earlier paper \cite{he} which is concerned with the homogenization of pulsating waves for spatially periodic equations of combustion type). As a matter of fact, in the periodic case, the wave profile $V_T(t,\xi)$ and the speed $d_T$ satisfy
\begin{equation*}
	\partial_t V_T= \partial_{\xi\xi} V_T+ d_T\partial_{\xi} v_T+f(t/T,V_T)\ \ \hbox{ for }\, t\in\R,\,\,\xi\in\R.
\end{equation*}
The work \cite{contri} proved the convergence of $d_T$ together with that of $V_T(t,\xi)$ as $T\to 0^+$ by applying parabolic estimates and compactness arguments to the above equation. The proof highly relied on two ingredients: the constant speed $d_T$ is bounded in $T>0$ and the rescaled wave profiles $V_T(Tt,\xi)$ share the same period with respect to $t$ for all $T>0$. Therefore, to apply this approach to our almost periodic equation, one needs to 
establish the uniform boundedness of the function $t \to c_T'(t)$ for all small $T>0$, and the uniform almost periodicity of the rescaled wave profile $U_T(tT,\xi)$ in $t\in\R$ with respect to all small $T$. Both are very difficult to reach due to the complexity of almost periodic problems.

We provide a new approach to prove the convergence of average speed $\bar{c}_T$, without knowing much information on the profile $U_T(t,\xi)$ when $T\to 0^+$, and it also allows the nonlinearity to be of multistable type. Our approach is based on the global stability of almost periodic traveling wave as stated in Theorem \ref{th-known} (ii) above, and the construction of super- and sub-solution of \eqref{eq-main} where the key point is to appropriately perturb the traveling wave of the homogenized equation \eqref{mean-eq}. Inspired by the averaging theory for the Cauchy problem of ODEs (see e.g., \cite{moser,perko}) and functional differential equations (see e.g., \cite{hv}) with high frequency forcing, we choose $TF(t,u)$ as a perturbation function, where $F$ is defined by \eqref{define-F}. It turns out that this function works effectively, provided that $F(t,u)$, $\partial_u F(t,u)$ and $\partial_{uu} F(t,u)$ are almost periodic (or equivalently, they are bounded) in $t$ uniformly with respect to $u$ in bounded sets, i.e., the additional assumption \eqref{assume-F} holds. This assumption in particular implies that the perturbation term is of order $T$ as $T\to 0^+$. As a consequence, we are able to obtain the convergence of $\bar{c}_T$, and the estimate of the convergence rate as stated in \eqref{es-con-rapid} as well. Note that \eqref{es-con-rapid} implies that the deviation of $\bar{c}_T$ in comparison to  $c_0$ is at most linear along with small $T>0$.

However, it is known that the assumption \eqref{assume-F}  does not hold for a large class of $f$ (see e.g., \cite{fink}). On the other hand, it holds if $f(t,\cdot)$ is quasi-periodic with the periods satisfying a nonresonance condition. This follows from a classical result by Moser \cite{moser}, and  we recall it below for clarity.

\begin{pro} {\rm (\cite{moser})}
Assume that the function $f(t,u)$ is quasi-periodic in $t$ with basic frequencies $\omega=(\omega_1,\cdots,\omega_N)\in\R^N$ for some $N\geq 1$, in the sense that
$$f(t,u)=\hat{f}(\omega_1t,\cdots,\omega_Nt,u),$$
where $\hat{f}: \R^{N+1}\to \R$, $(\tau_1,\cdots,\tau_N,u)\mapsto \hat{f}(\tau_1,\cdots,\tau_N,u)$ is a $C^2$ function that is $1$-periodic in each $\tau_i$ for $1\leq i\leq N$. Assume further that there exist $\alpha>0$ and $\beta>0$ such that
\begin{equation}\label{nonresonance}
	|a \cdot \omega | \geq \beta |a|^{-\alpha} \,\,\hbox{ for all } \,\, a\in \Z^N\setminus\{0\}.
\end{equation}
Then the functions $F(t,u)$, $\partial_u F(t,u)$ and $\partial_{uu} F(t,u)$ are quasi-periodic in $t$ with basic frequencies $\omega$, where $F(t,u)$ is given by \eqref{define-F}.  Consequently, they are all almost periodic in $t$ uniformly with respect to $u$ in bounded sets.
\end{pro}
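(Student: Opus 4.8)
The plan is to reduce the statement to a one-variable fact about primitives of zero-mean quasi-periodic functions, and to prove that fact by Fourier analysis on the torus $\T^N$, the Diophantine condition \eqref{nonresonance} entering exactly once, to control the small divisors $a\cdot\omega$. First I would record the reduction. For fixed $u$ one has $\partial_u F(t,u)=\int_0^t\bigl(\partial_u f(\tau,u)-\overline{\partial_u f}(u)\bigr)d\tau$ and $\partial_{uu}F(t,u)=\int_0^t\bigl(\partial_{uu} f(\tau,u)-\overline{\partial_{uu} f}(u)\bigr)d\tau$, the $u$-differentiation passing under the $t$-integral because $\partial_u f$ and $\partial_{uu}f$ are bounded on $\T^N\times E$; thus $F$, $\partial_u F$ and $\partial_{uu}F$ are all of the form $t\mapsto\int_0^t g(s)\,ds$ with $g(t)=\hat g(\omega_1 t,\dots,\omega_N t)$ and $\hat g$ equal to the $1$-periodic function $\hat f(\cdot,u)-\bar f(u)$, $\partial_u\hat f(\cdot,u)-\overline{\partial_u f}(u)$, or $\partial_{uu}\hat f(\cdot,u)-\overline{\partial_{uu}f}(u)$. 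Each such $g$ has zero mean: indeed \eqref{nonresonance} forces $\omega$ to be rationally independent, so Weyl's equidistribution theorem gives $\bar f(u)=\int_{\T^N}\hat f(\tau,u)\,d\tau$ together with the analogous identities for the derivatives, whence the three generating functions have zero integral over $\T^N$. Hence it suffices to show: the primitive of a zero-mean quasi-periodic function whose $1$-periodic generating function is sufficiently smooth and whose frequencies satisfy \eqref{nonresonance} is again quasi-periodic with the same frequencies, with all bounds controlled by $\alpha$, $\beta$ and norms of the generating function --- these being uniformly bounded for $u$ in a fixed bounded set.

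To prove this I would construct the generating function of the primitive explicitly. Writing $\hat g(\tau)=\sum_{a\in\Z^N}g_a\,e^{2\pi i a\cdot\tau}$ with $g_0=0$, set
\[
\hat h(\tau)=\sum_{a\in\Z^N\setminus\{0\}}\frac{g_a}{2\pi i\,(a\cdot\omega)}\bigl(e^{2\pi i a\cdot\tau}-1\bigr),
\]
which is meaningful term by term because $a\cdot\omega\neq 0$ for $a\neq 0$, again by \eqref{nonresonance}. Provided this series converges to a continuous function on $\T^N$, term-by-term differentiation of $t\mapsto\hat h(\omega_1 t,\dots,\omega_N t)$ returns $\sum_{a\neq 0}g_a\,e^{2\pi i a\cdot\omega t}=g(t)$, and since $\hat h(0)=0$ this identifies $\int_0^t g(s)\,ds=\hat h(\omega_1 t,\dots,\omega_N t)$, a quasi-periodic function with frequencies $\omega$. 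Running the construction with $\hat g=\hat f(\cdot,u)-\bar f(u)$ produces the $1$-periodic generating function $\hat F(\cdot,u)$ of $F(t,u)$, and similarly for $\partial_u F$ and $\partial_{uu}F$; their regularity in $u$ is inherited directly from the convergence estimates, which are uniform on bounded $u$-sets because $\|\hat f(\cdot,u)\|_{C^2}$ and the relevant norms of $\partial_u\hat f(\cdot,u)$ and $\partial_{uu}\hat f(\cdot,u)$ are. As a quasi-periodic function is almost periodic (Bohr/Bochner), the final assertion of the statement then follows.

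The one genuine obstacle is the small-divisor estimate justifying convergence of the series for $\hat h$. From \eqref{nonresonance} we have $|a\cdot\omega|^{-1}\le\beta^{-1}|a|^{\alpha}$, so the $a$-th coefficient of $\hat h$ is bounded by $\tfrac{1}{2\pi\beta}|a|^{\alpha}|g_a|$; against this, smoothness of $\hat g$ gives polynomial decay of $|g_a|$ (integrate by parts in a coordinate $j$ with $|a_j|$ maximal, using boundedness of the corresponding derivatives on $\T^N\times E$). The crux is to verify that this decay outweighs the growth $|a|^{\alpha}$ after summation over $\Z^N$; here the exponent $\alpha$, the dimension $N$ and the available number of $\tau$-derivatives of $\hat f$ must be balanced, and the tightest case is $\partial_{uu}F$, for which two fewer $\tau$-derivatives are at hand --- a dyadic grouping or summation-by-parts refinement is the natural tool if the crude count falls short. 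Once this is settled, the series converges absolutely and uniformly with the stated uniform bounds, term-by-term differentiation is legitimate, $\hat F(\cdot,u)$ is continuous on $\T^N$ uniformly in $u$, and $F(t,u)=\hat F(\omega_1 t,\dots,\omega_N t,u)$ --- together with $\partial_u F$ and $\partial_{uu}F$ by the same argument --- is quasi-periodic in $t$ with basic frequencies $\omega$ uniformly for $u$ in bounded sets. I expect this small-divisor bookkeeping to be the delicate step; the remaining ingredients (Fourier analysis on $\T^N$, Weyl equidistribution, uniform parameter dependence, and quasi-periodic implies almost periodic) are routine.
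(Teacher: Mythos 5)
The paper does not prove this proposition; the header attributes it directly to Moser's 1966 SIAM Review article, and the surrounding text says only ``This follows from a classical result by Moser~\cite{moser}.'' So there is no in-paper argument to compare yours against, only the citation. Your overall strategy — reduce to solving the homological equation $\sum_j\omega_j\partial_{\tau_j}\hat h=\hat g$ on $\T^N$ by Fourier series, control the small divisors $a\cdot\omega$ via the Diophantine inequality, then track uniformity in $u$ — is indeed the standard route and, as far as I can tell, Moser's.

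However, the step you flag as ``delicate'' is not mere bookkeeping; as written your estimate does not close, and no dyadic or summation-by-parts refinement can rescue it. After invoking~\eqref{nonresonance}, the $a$-th coefficient of $\hat h$ is bounded by $\beta^{-1}|a|^{\alpha}|g_a|$. For $F$ itself, $\hat g(\cdot,u)=\hat f(\cdot,u)-\bar f(u)\in C^2(\T^N)\subset H^2(\T^N)$, and the best one gets from this is $\hat h\in H^{s}$ with $s\le 2-\alpha$; continuity of $\hat h$ (which quasi-periodicity of $F$ requires) needs $s>N/2$, i.e.\ $\alpha<2-N/2$, which already fails for every $\alpha>0$ once $N\ge 4$ and is borderline for $N=2,3$. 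The situation is strictly worse for $\partial_{uu}F$: since $\hat f$ is only assumed $C^2$ jointly, $\partial_{uu}\hat f(\cdot,u)$ is merely continuous in $\tau$, so its Fourier coefficients are only in $\ell^2$ with no polynomial decay, and after dividing by the small divisors the series $\sum_a |a|^{\alpha}|g_a|$ (or the Sobolev-type variant) is simply divergent. This is not a failure of cleverness in the summation; the homological-equation theorems of Moser type genuinely lose on the order of $N+\alpha$ derivatives, so one needs $\hat g\in C^{\ell}(\T^N)$ with $\ell$ comfortably larger than $N+\alpha$ to produce a continuous $\hat h$. To cover $\partial_{uu}F$ one therefore needs $\hat f\in C^{k}$ with $k\gtrsim N+\alpha+2$, not merely $C^2$. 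The proposition as stated in the paper is an informal recollection of Moser's result and elides this regularity threshold; your proposal inherits that imprecision and, more importantly, does not supply the missing hypothesis or the harder argument that would be needed to make the $C^2$ version true. If you want a self-contained proof, assume $\hat f\in C^{k}$ with $k$ large enough to beat $N+\alpha$ even after two $u$-derivatives, and then your convergence estimate will in fact close (absolute convergence of the Fourier series of $\hat h$, uniformly in $u$ on bounded sets).
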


Without the assumption \eqref{assume-F}, we are still able to prove the convergence \eqref{con-cT-c0} by making an almost periodic approximation of $F(t,u)$  (see the approximating function $F_T(t,u)$ defined in \eqref{de-FT} below). We can prove that the perturbation term $TF_T(t,u)$ converges to $0$ as $T\to 0^+$ uniformly with respect to $t\in\R$ and $u$ in bounded sets, but it is unclear whether  $TF_T(t,u)$ is of order $T$ as $T\to 0^+$. Therefore, the question whether the estimate \eqref{es-con-rapid} remains hold in the general case is open.

We now come back to condition (A4). In the following proposition, we collect some general conditions ensuring the existence of solutions to problem \eqref{tw-homo}.

\begin{pro}\label{multi-existence}
Assume that $\bar{f}(0)=\bar{f}(1)=0$ and define
\begin{equation*}%\label{define-G}
	G(u)=\int_{0}^{u}\bar{f}(s)ds \,\,\hbox{ for } \,\,u\in [0,1].
\end{equation*}
Then {\rm (A4)} holds true provided that one of the following conditions holds:
\begin{itemize}
	\item [{\rm (a)}] $G(1)<0$ and $\bar{f}(u)< 0$ for all $u\in \left\{u\in (0,1):\, G(u)> G(1)\right\}$;
	\item [{\rm (b)}] $G(1)>0$ and $\bar{f}(u)> 0$ for all $u\in \left\{u\in (0,1): \,G(u)> G(0)\right\}$;
	\item [{\rm (c)}] $G(1)=0$ and $G(u)<0$ for all $u\in (0,1)$.
\end{itemize}
\end{pro}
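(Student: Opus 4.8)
The plan is to reduce (A4) to a classical phase--plane analysis of the profile equation together with an energy (Hamiltonian) argument. First I would rewrite \eqref{tw-homo} as the planar system $\phi'=\psi$, $\psi'=-c\psi-\bar f(\phi)$, so that (A4) amounts to producing, for some $c\in\R$, a heteroclinic orbit from $(1,0)$ to $(0,0)$ contained in $\{0<\phi<1,\ \psi<0\}$. Since (A2) forces $\bar f'(0)\le-\gamma_0<0$ and $\bar f'(1)\le-\gamma_0<0$, both rest points are hyperbolic saddles for every $c$, with smooth one--dimensional stable/unstable manifolds; a heteroclinic of this type is automatically a front with $\phi(-\infty)=1$, $\phi(+\infty)=0$ and $\phi'<0$ (the remaining conclusions being exactly what is recalled right after (A4)). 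The key device is the energy $E(\xi):=\tfrac12\phi'(\xi)^2+G(\phi(\xi))$, obeying $E'=-c\,(\phi')^2$; integrating along a connecting orbit yields $G(1)=c\int_\R(\phi')^2\,d\xi$, so the speed must satisfy $\mathrm{sgn}\,c=\mathrm{sgn}\,G(1)$, which singles out $c=0$ in case (c), $c<0$ in case (a) and $c>0$ in case (b).

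Next I would note that (a) and (b) are swapped by the involution $u\mapsto 1-u(\cdot,-\cdot)$, which turns $\bar f(u)$ into $g(u):=-\bar f(1-u)$ and $G(u)$ into $G(1-u)-G(1)$, sends $c$ to $-c$, and preserves $g(0)=g(1)=0$, $g'(0),g'(1)<0$ and the class of monotone fronts connecting $0$ and $1$; a direct substitution checks that condition (a) for $\bar f$ becomes condition (b) for $g$. It therefore suffices to treat (a) and (c).

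For (c) I would take $c=0$: then $E$ is conserved, the orbit out of $(1,0)$ lies on $\{E\equiv G(1)=0\}$, i.e. $\phi'=-\sqrt{-2G(\phi)}$, which by the hypothesis $G<0$ on $(0,1)$ is well defined, smooth and strictly negative there, giving a strictly decreasing profile; and since $G(0)=G(1)=0$ with $G''(0)=\bar f'(0)<0$, $G''(1)=\bar f'(1)<0$, one has $-2G(\phi)\sim-\bar f'(0)\phi^2$ near $0$ and $-2G(\phi)\sim-\bar f'(1)(1-\phi)^2$ near $1$, so $\int_{0}d\phi/\sqrt{-2G(\phi)}$ and $\int^{1}d\phi/\sqrt{-2G(\phi)}$ diverge, whence the orbit attains the values $1$ and $0$ only at $\xi=\mp\infty$: this is the standing front.

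For (a) I would fix $c<0$ and shoot along the branch $\Gamma_c$ of the unstable manifold of $(1,0)$ that enters $\{0<\phi<1,\ \psi<0\}$, tracking it while $\psi<0$. As $c<0$ forces $E$ strictly increasing, one has $E>G(1)$ once $\Gamma_c$ has left a neighbourhood of $(1,0)$, and the hypothesis $\bar f<0$ on $\{G>G(1)\}$ yields: (i) at any point where $\psi$ returns to $0$, the coordinate $\phi^{*}\in(0,1)$ satisfies $E=G(\phi^{*})>G(1)$, hence $\bar f(\phi^{*})<0$, hence $\psi'=-\bar f(\phi^{*})>0$ and the orbit reflects into $\{\psi>0\}$ (an ``undershoot''); (ii) $\Gamma_c$ cannot converge to an interior rest point $(\phi_1,0)$, for then $\bar f(\phi_1)=0$ gives $G(\phi_1)\le G(1)<\lim E=G(\phi_1)$, a contradiction. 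Hence each $c<0$ is of exactly one type: overshoot ($\phi$ reaches $0$ with $\psi<0$), undershoot, or a connection to $(0,0)$ — and in the connecting case $\Gamma_c$ stays in $\{0<\phi<1,\ \psi<0\}$ and enters $(0,0)$ along its stable manifold, producing the front. Since overshoot and undershoot are open conditions in $c$ (the relevant exits are transversal and occur at finite $\xi$), and all $c$ near $0^{-}$ undershoot while all sufficiently negative $c$ overshoot (a short computation using $E-G(1)=|c|\int(\phi')^2$ and the steep slope $\lambda_{+}\approx|c|$ of the unstable direction), the connected interval $(-\infty,0)$ cannot be the disjoint union of those two nonempty open sets, so some $c_0<0$ yields a connection. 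The main obstacle is precisely this last step: establishing continuous dependence of $\Gamma_c$ on $c$ up to the exit times (which needs an a priori bound on the orbit), hence the openness and non--exhaustion claims, and ruling out ``stalling'' at an intermediate zero of $\bar f$ — all of which hinge on the cooperation between the monotonicity of $E$ and the sign of $\bar f$ on $\{G>G(1)\}$, while the degenerate configurations (tangency of $G$ to the level $G(1)$, intermediate zeros of $\bar f$) must be folded into the same bookkeeping. A shortcut, available once one observes that $\{G>G(1)\}$ reduces to a single interval $(0,a_1)$ on which $\bar f<0$ and $G$ is decreasing, is to replace $\bar f$ on $[a_1,1]$ by a genuinely bistable nonlinearity, invoke the classical existence result of \cite{fm}, and carry the resulting front back by a comparison argument.
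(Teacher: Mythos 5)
Your proof takes a genuinely different route from the paper's, which disposes of the proposition in one line: each of (a)--(c) guarantees, by \cite[Theorem~1.2]{po}, that equation \eqref{mean-eq} admits a propagating terrace connecting $0$ and $1$, and the proof of \cite[Proposition~1.1]{dll} shows that under these hypotheses the terrace must consist of a single wave. Your approach is instead a self-contained phase-plane analysis: the energy $E=\tfrac12(\phi')^2+G(\phi)$ with $E'=-c(\phi')^2$ and $\mathrm{sgn}\,c=\mathrm{sgn}\,G(1)$, the explicit standing front in (c), the reflection $u\mapsto 1-u$, $\xi\mapsto-\xi$ exchanging (a) and (b), and shooting in (a) via the overshoot/undershoot/connection trichotomy. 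Your observation that $\{u\in(0,1):G(u)>G(1)\}$ is a single interval $(0,a_1)$ on which $\bar f<0$, so that any return of $\psi$ to zero forces a reflection and no orbit can converge to an interior rest point (since $E$ strictly increases from $G(1)$ while $G(\phi_1)\le G(1)$ there), is exactly the phase-plane version of the ``terrace collapses to a single wave'' step in \cite{dll}. The paper's route buys brevity and extra generality; yours buys elementarity and transparency.

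Two caveats. First, your argument uses (A2) to make $(0,0)$ and $(1,0)$ hyperbolic saddles and to obtain the decay rates used in (c), but the paper explicitly remarks immediately after the statement that (A2) is \emph{not} used in this proposition, so that $0$ and $1$ may be degenerate zeros of $\bar f$. Since (A2) is a standing hypothesis elsewhere, your version suffices for the paper's applications, but it proves strictly less than what is claimed. Second, the ``shortcut'' in your final sentence does not actually save anything: replacing $\bar f$ on $[a_1,1]$ by a bistable nonlinearity and invoking \cite{fm} produces a front only for the \emph{modified} equation, and the comparison principle by itself cannot rule out, for the original $\bar f$, a terrace of two or more stacked slower waves when $\bar f$ has additional zeros in $(a_1,1)$; one still needs the shooting argument (or the terrace-reduction in \cite{dll}) to exclude that. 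Keep the shooting argument and drop the shortcut.
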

Note that condition (A2) is not used in the above proposition, and therefore the limiting states $0$ and $1$ can be degenerate zeros of $\bar{f}$. It is clear that the conditions (a), (b) and (c) also imply that the wave speed $c_0$ is
negative, positive, and zero, respectively. The above proposition follows easily from \cite[Proposition 1.1]{dll} and its proof. Indeed, each of the conditions (a)-(c) can guarantee that equation \eqref{mean-eq} admits a propagating terrace{\footnote{Roughly speaking, a propagating terrace of \eqref{mean-eq} is a finite family of stacked traveling waves with ordered wave speeds. We refer to \cite{dgm,po,dm,gr,dll-23} and references therein for the precise definition and more properties of propagating terraces for various types of reaction-diffusion equations.} connecting $0$ and $1$ (see \cite[Theorem 1.2]{po}), and such a terrace necessarily consists of a single traveling wave (see the proof of \cite[Proposition 1.1]{dll}).

\subsection{Asymptotic limit of average speed in slowly oscillating environment}
In this subsection, we present our result on the asymptotic limit of average speed in slowly oscillating environment.
As we mentioned earlier, even in the periodic case, whether the limit of $\bar{c}_T$ as $T\to +\infty$ exists is unknown. Let us first derive the limit from a heuristic point of view.
For any $T>0$ and $\alpha\in (0,1)$, denote by $X_T^{\alpha}(t)$ the $\alpha$-level set of the almost periodic traveling wave solution $u_T(t,x)$, that is,
$$u_T(t,X_T^{\alpha}(t))=\alpha\, \hbox{ for all }\, t\in\R.$$
When $T$ is large, the environment is closer and closer to homogeneous on large intervals, thus locally $u_L(t,x)$ may be close to a traveling wave of the homogeneous equation with the nonlinearity $f$ temporally frozen at some time $s\in\R$, that is,
\begin{equation}\label{frozen-eq-f}
	u_t=u_{xx}+f(s,u),\ \  t\in\R,\,\,x\in\R.
\end{equation}
This leads us to conjecture that the motion of $X_T^{\alpha}(t)$ can by approximately described by the following ODE
$$\frac{d X_T^{\alpha}}{dt}=c\left(t/T\right),\ \  \hbox{for }\, t\in\R,$$
where for each $s\in\R$, $c(s)$ denotes the wave speed of traveling wave (if exists) for the homogeneous equation \eqref{frozen-eq-f}. As it is known, the average speed of an almost periodic traveling wave should be equal to the mean speed of any level set, and thus for each $T>0$, $\bar{c}_T$ is equal to the mean speed of $X_T^{\alpha}(t)$.
This would lead to a prediction that, as $T\to +\infty$, $c_T$ converges to a constant $c_*$ which is given by
\begin{equation}\label{limit-c*}
	c_*=\lim_{t\to\infty} \frac{1}{t} \int_0^t c(s) ds,
\end{equation}
provided that the above limit exists.

From the above heuristic arguments, to establish the convergence of $\bar{c}_T$, it is necessary to make sure that \eqref{frozen-eq-f} admits a traveling wave connecting $0$ and $1$, and that the limit in \eqref{limit-c*} exists. The latter actually requires us to consider all equations in the hull of \eqref{frozen-eq-f} (see the reasons in Remark \ref{re-A5} below). In other words, for each $g\in H(f)$ where $H(f)$ denotes the hull of $f$ (we will recall the definition of the hull for almost periodic functions in Section 2.1), and for each $s\in\R$, we need to impose conditions on the following homogeneous equation
\begin{equation}\label{frozen-eq-g}
	u_t=u_{xx}+g(s,u),\ \   t\in\R,\,\,x\in\R.
\end{equation}

Our assumption in this part is stated as follows:

\begin{itemize}
	\item[(A5)] For each $g\in H(f)$, and each $s\in\R$, equation \eqref{frozen-eq-g} admits a traveling wave connecting $0$ and $1$.
\end{itemize}

Clearly, the wave profile and wave speed depend on the nonlinearity $g(s,\cdot)$.  In our discussion below, we use $\phi(\xi;g\cdot s)$ and $c(g\cdot s)$ to denote the profile and speed, respectively. Namely, $(\phi(\xi;g\cdot s), c(g\cdot s))$ satisfies
\begin{equation}\label{tw-frozen}\left\{\baa{l}
	\partial_{\xi\xi} \phi(\xi;g\cdot s)+c(g\cdot s)\partial_{\xi} \phi(\xi;g\cdot s)+g(s, \phi(\xi;g\cdot s))=0
   \quad\hbox{for }\,\,\xi\in\R, \vspace{5pt}\\
\phi(-\infty;g\cdot s)=1,\quad \phi(+\infty;g\cdot s)=0. \eaa\right.
\end{equation}
It is easily seen that under condition (A2), \eqref{asspars} remains hold with $f$ replaced by $g\in H(f)$. This implies that, for each  $g\in H(f)$ and $s\in\R$,  $u^+=1$ and $u^-=0$ are two linearly stable zeros of $g(s,\cdot)$, and consequently, the traveling wave $\phi(x-c(g\cdot s)t;g\cdot s)$ is unique (up to shifts) and globally stable (see \cite{fm}).
We will show in Section 4.1 that the functions $(g,s)\mapsto \phi(\cdot;g\cdot s)$ and $(g,s)\mapsto c(g\cdot s)$ are continuous and almost periodic in $s\in\R$. This in particular implies that the limit in \eqref{limit-c*} exists.

\begin{theo}\label{th-slow}
	Let {\rm (A1)-(A3)} and {\rm (A5)} hold. Then there holds
	\begin{equation}\label{es-con-slow}
		|\bar{c}_T-c_* | \leq \frac{M_2}{T}  \,\,\hbox{ for all large }\,\, T>0,
	\end{equation}
	for some constant $M_2$ independent of $T$,  where $c_*$ is given by \eqref{limit-c*}.
\end{theo}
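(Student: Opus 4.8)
The plan is to mirror the strategy described for the rapidly oscillating case: use the global stability of the almost periodic traveling wave (Theorem \ref{th-known} (ii)) together with the construction of sub- and super-solutions of \eqref{eq-main} obtained by suitably perturbing a reference wave. In the slow regime the natural reference object is not a single homogenized wave but the family of frozen waves $\phi(\cdot; f\cdot s)$. Concretely, I would set $\xi = x - \int_0^t c(f\cdot(\tau/T))\,d\tau$ and try an ansatz of the form
\begin{equation*}
w^{\pm}_T(t,x) = \phi\big(\xi \mp \rho_T(t) \mp q_T(t,\xi);\, f\cdot(t/T)\big) \pm \varepsilon_T(t),
\end{equation*}
where $\rho_T$ absorbs a controlled drift, $\varepsilon_T$ is a small spatially-homogeneous margin exploiting the uniform stability in (A2), and $q_T$ is a shape correction that is needed because the frozen profile itself varies in $t$. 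Plugging this into the operator $\mathcal{N}[w] := w_t - w_{xx} - f(t/T,w)$, the leading error comes from $\partial_t\big[\phi(\xi; f\cdot(t/T))\big] = \tfrac{1}{T}(\partial_s\phi)(\xi; f\cdot(t/T))\cdot(\text{derivative of the hull flow})$, which is $O(1/T)$; everything else (the profile equation \eqref{tw-frozen} itself) cancels exactly, and the remaining terms involving $q_T$, $\rho_T'$, $\varepsilon_T'$ can be made to dominate this $O(1/T)$ error by the standard device of pushing the error into the region where $\phi$ is close to $0$ or $1$ (using the stable-zero estimate \eqref{asspars}) and into the region where $\partial_\xi\phi$ is bounded below (using $\partial_\xi\phi<0$ on compact sets, which holds uniformly in $(g,s)$ by the compactness of $H(f)\times$ [one period of the hull flow]). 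Choosing $\rho_T(t) = \kappa\, t / T$ for a suitable constant $\kappa$ independent of $T$, and $\varepsilon_T$, $q_T$ of size $O(1/T)$, should yield a genuine super-solution (and symmetrically a sub-solution) for all large $T$.

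The next step is to convert these ordered sub/super-solutions into a statement about $\bar c_T$. By construction $w^-_T(t,\cdot) \le u_T(t,\cdot) \le w^+_T(t,\cdot)$ can be arranged at some initial time (after a shift), so by the comparison principle and the global stability in Theorem \ref{th-known} (ii), the wave $u_T$ stays trapped, up to a bounded shift $\xi^*_T$, between translates of $\phi(\cdot; f\cdot(t/T))$ moving with position $\int_0^t c(f\cdot(\tau/T))\,d\tau \pm O(t/T)$. Tracking, say, the $\tfrac12$-level set $X_T^{1/2}(t)$ of $u_T$ (which by monotonicity is well-defined and satisfies $\frac{d}{dt}X_T^{1/2}(t) = c_T'(t)$ up to the bounded-in-$t$ difference between the true profile and the frozen profile), this gives
\begin{equation*}
\Big| X_T^{1/2}(t) - X_T^{1/2}(0) - \int_0^t c\big(f\cdot(\tau/T)\big)\,d\tau \Big| \le \frac{C}{T}\, t + C'
\end{equation*}
for all $t\ge 0$, with $C,C'$ independent of $T$. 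Dividing by $t$, letting $t\to+\infty$, and recalling both that $\bar c_T$ equals the mean speed of any level set (so the left side tends to $|\bar c_T - \frac1t\int_0^t c(f\cdot(\tau/T))d\tau|$ in the limit) and that $\tfrac1t\int_0^t c(f\cdot(\tau/T))\,d\tau \to c_*$ as $t\to\infty$ uniformly (since $s\mapsto c(f\cdot s)$ is almost periodic with mean $c_*$, by the continuity/almost-periodicity result announced for Section 4.1), yields $|\bar c_T - c_*| \le C/T$, which is \eqref{es-con-slow} with $M_2 = C$. One must be careful that the uniform-in-$T$ error $C/T \cdot t$ survives the two successive limits $t\to\infty$ then $T\to\infty$ correctly; the clean way is to first fix $T$, take $t\to\infty$ to get $|\bar c_T - c_*| \le C/T$ directly, since the $C'$ term washes out and the integral term already converges to $c_*$ for each fixed $T$.

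The main obstacle I anticipate is the construction and error control of the shape correction $q_T$, i.e., showing that the $O(1/T)$ defect created by the time-derivative of the frozen profile can genuinely be corrected within an $O(1/T)$-size modification that is itself uniformly controlled in $t\in\R$ and over the hull. This requires uniform (in $g\in H(f)$ and $s\in\R$) exponential decay of $\phi(\cdot;g\cdot s)$ and of $\partial_\xi\phi(\cdot;g\cdot s)$ at $\pm\infty$, uniform bounds on the derivatives $\partial_s\phi$, and a uniform spectral gap for the linearized operator $\partial_{\xi\xi} + c(g\cdot s)\partial_\xi + \partial_u g(s,\phi(\cdot;g\cdot s))$ around each frozen wave — all of which should follow from the bistability assumption (A2) holding uniformly over $H(f)$, combined with the compactness of $H(f)$, but assembling these uniformly is the technical heart of the argument. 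A secondary subtlety is that $H(f)$ is only compact as a hull under the sup-metric, so the "frozen flow" $s \mapsto f\cdot s$ must be handled through the almost-periodic structure rather than literal periodicity; this is exactly why (A5) is posed over all of $H(f)$ and not just $f$ itself, and Remark \ref{re-A5} presumably addresses precisely this point.
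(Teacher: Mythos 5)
Your overall strategy is the paper's: build sub- and super-solutions by moving the frozen profile along the trajectory $X_T(t)=\int_0^t c(f\cdot(\tau/T))\,d\tau$ with small corrections, use the global stability (Theorem~\ref{th-known}~(ii)) to trap the almost periodic wave, track a level set, divide by $t$, and send $t\to\infty$. The main discrepancy is that you introduce a $\xi$-dependent shape correction $q_T(t,\xi)$ inside the profile argument and flag it as the ``technical heart'', together with a uniform spectral gap for the linearized operator around each frozen wave; neither is needed. The paper's super-solution (Lemma~\ref{sup-slow}) is simply
$w^+_T(t,x)=\phi\big(x-X_T(t)+\kappa_T(t);\,t/T\big)+p_T(t)$
with \emph{purely time-dependent} $\kappa_T,p_T$, and the $O(1/T)$ defect $\tfrac1T\partial_s\phi$ is absorbed by the same three-region pointwise argument as in the rapid case (Lemma~\ref{sup-rapid}): in the tails one uses the uniform bistable inequality~\eqref{choose-gamma1} to get a good sign from $f(\phi)-f(\phi+p_T)$, and in the middle region one uses a uniform lower bound $|\partial_\xi\phi|\ge\beta_1$ and lets $\kappa_T'<0$ carry the error. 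No spectral-gap estimate enters the comparison argument itself. The genuine technical inputs are exactly the uniform-over-$H(f)\times\R$ properties you list --- boundedness and almost periodicity of $c(f\cdot s)$, uniform exponential tails and a uniform interior slope bound for $\phi(\cdot;g\cdot s)$ (Propositions~\ref{continuity}--\ref{uniform-almost}), and crucially $\sup_{\xi,s}|\partial_s\phi|<\infty$ (Proposition~\ref{smooth-s}); the latter is where an implicit-function-theorem/invertibility argument does appear, but only to prove that uniform bound, not to construct any shape corrector. Also note that $p_T$ is not $O(1/T)$ uniformly in $t$: it starts at a fixed $\epsilon_2$ (so the initial ordering is trivial) and decays exponentially to $O(1/T)$, which is what makes $\kappa_T(t)$ equal a bounded term plus a linear drift of slope $O(1/T)$; that is exactly what survives after dividing by $t$ and taking $t\to\infty$. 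With those adjustments your argument coincides with the paper's.
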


The estimate \eqref{es-con-slow} implies that in slowly oscillating environments, the speed difference $\bar{c}_T-c_*$ varies at most linearly with respect to $1/T$,  analogously to \eqref{es-con-rapid} in rapidly oscillating case.

Theorem \ref{th-slow} will be proved by applying a similar strategy to that of Theorem \ref{th-rapid}, but here the super- and sub-solution will be constructed by using the family of homogeneous traveling waves $(\phi(\xi;f\cdot s), c(f\cdot s))_{s\in\R}$. The proof uses as an essential ingredient the property that the wave profile $\phi(\xi;f\cdot s)$ is of class $C^1$ in $s\in\R$ and $\partial_s\phi(\xi;f\cdot s)$ is bounded for all $s\in\R$ and $\xi\in\R$ (see Proposition \ref{smooth-s} below). We point out that, a similar idea was used in a recent paper by the author and Hamel, Liang \cite{dhl}, which determined the asymptotic limits of wave speed and wave profile for bistable pulsating wave in slowly periodically oscillating media (see \cite[Theorem 1.6]{dhl}). A stronger assumption on $f$ ($f$ is homogeneous near $u=0$ and $u=1$){\footnote{Without this additional assumption, the limiting speed of slow oscillations was also determined in \cite[Theorem 1.2]{dhl} by an alternative approach, which is based on the method of large-time and large-space scaling and some earlier results in \cite{g} concerning the asymptotic behavior of solutions for the rescaled equation.}} was used in \cite{dhl} to construct super- and sub-solution, while such an assumption is not needed in our theorem. Moreover, as we will discuss in Remark \ref{rm-space-period}, our convergence result for temporally heterogeneous equation exhibits some specific features when compared to that for spatially periodic equation.

\begin{rem}\label{re-A5}
 It should be pointed out that condition {\rm (A5)} is necessary in our proof, although the limiting speed $c_*$ does not involve with functions in $H(f)$. As a matter of fact, having in hand only the existence of $(\phi(\xi;f\cdot s), c(f\cdot s))$, one may be unable to show the almost periodicity (or even the continuity) of the function $s\mapsto c(f\cdot s)$, and derive some uniform estimates of $\phi(\xi;f\cdot s)$ with respect to $s$, which will play an important role in our construction of super- and sub-solution. We also mention that {\rm (A5)} holds under some general conditions on each $g(s,\cdot)$, analogously to those collected in {\rm Proposition \ref{multi-existence}}.
\end{rem}

\begin{rem}\label{rm-space-period}
	Consider the spatially periodic equation \eqref{eq-space} with $f$ being of bistable form in the sense that, in addition to {\rm (A2)}, there exists some $b\in C^1(\R,(0,1))$ such that for each $x\in\R$,  
	\begin{equation}\label{bis-form}
		f(x,0)=f(x,1)=f(x,b(x))=0,\ f(x,\cdot)<0\hbox{ on }(0,b(x)),\ f(x,\cdot)>0\hbox{ on }(b(x),1).
	\end{equation}
  The existence of pulsating waves for small or large $T>0$, as well as the asymptotic limits of wave speed (denoted by $e_T$), were established in {\rm \cite{dhl,dhz2,dhz1}}. In particular, it is known that as $T\to 0^+$, $e_T$ converges to the wave speed for the corresponding homogenized equation ({\rm \cite[Theorems 1.2 and 1.4]{dhz1}}), which is similar to our {\rm Theorem \ref{th-rapid}}. 
	
	On the other hand, the limiting speed of slow oscillations for equation \eqref{eq-space} exhibit a different form compared to {\rm Theorem \ref{th-slow}} for equation \eqref{eq-main}. Indeed, in the spatially periodic case, if $\int_{0}^1 f(x,u)du$ has a fixed nonzero sign for all $x\in\R$, then as $T\to+\infty$, $e_L$ converges to the harmonic mean (over one period) of the speed function $s\mapsto c(s)$ which is determined by the family of homogeneous equations \eqref{frozen-eq-f} ({\rm \cite[Theorem 1.2]{dhl}}), and if the function $x\mapsto \int_{0}^1 f(x,u)du$ changes signs, then wave blocking phenomena happens for all large $T$, i.e, $e_T=0$ ({\rm \cite[Theorem 1.7]{dhz2}}); while for our temporally heterogeneous equation \eqref{eq-main}, it follows from {\rm Theorem \ref{th-slow}} that the average speed $\bar{c}_T$ converges to the arithmetic mean of the function $s\mapsto c(s)$ no matter whether $t\mapsto \int_{0}^1 f(t,u)du$ changes signs or not.     
	
	Furthermore, {\rm Theorems \ref{th-rapid}} and {\rm \ref{th-slow}} indicate that the dependence of the sign of $\bar{c}_T$ with respect to $T$ is more intricate than that of $e_T$. Indeed, one will see from the example in the next subsection that, $\bar{c}_T$ has opposite signs for different values of $T$;  while for equation \eqref{eq-space},  if $e_T \neq 0$, then it has the sign of $\int_0^l\int_{0}^1 f(x,u)dudx$, where $l>0$ is the period of $f(x,u)$ with respect to $x$, and consequently, the sign of  $e_T$ is independent of $T$ ({\rm \cite[Theorem 1.1]{dhz1}}).     
\end{rem}

\begin{rem}\label{re-existence}
When condition {\rm (A4)} (resp. {\rm (A5)}) holds, it is natural to ask whether an almost periodic traveling wave solution of \eqref{eq-main} exists for all small $T>0$ (resp. large $T>0$). This is a difficult problem, since the dynamics of the ODE \eqref{ode-f} between the stable steady states $0$ and $1$ can be very complicated. In particular, unlike the periodic case, there may not exist any almost periodic solution lying strictly between $0$ and $1$ (see e.g., {\rm \cite{shen3,sy}}).
\end{rem}

\subsection{Effects of temporal variations in  periodic case}
After determining the limiting speeds for average speed of almost periodic traveling wave for small or large $T$, we present some interesting consequences regarding the effects of temporal variations. For simplicity, we focus on the periodic environment, and consider the bistable reaction-diffusion equation of form 
\begin{equation}\label{eq-p-bis}
		v_t=v_{xx}+a\left(t/T\right)v\left(v-b\left(t/T\right)\right)(1-v),\ \ t\in\R,\ x\in\R, 
\end{equation}  
where the functions $s\mapsto a(s)$ and $s\mapsto b(s)$ are of class $C^1(\R)$, 1-periodic, and $a$ is positive, $b$ ranges in $(0,1)$. For each $T>0$, it follows from \cite{abc} that equation \eqref{eq-p-bis} admits a unique (up to spatial shifts) periodic traveling wave
$$v_T(t,x)=V_T(t,x-d_Tt)\ \   \hbox{ for }\, t\in\R,\, x\in\R, $$
connecting $0$ and $1$,  where $V_T(t,x)$ is $T$-periodic in $t$, decreasing in $x$, and the speed $d_T\in\R$ is a constant.

The following corollary is an easy consequence of Theorems \ref{th-rapid} and \ref{th-slow}.

\begin{cor}\label{cor-bistable-limit}
For each $T>0$, let $d_T\in\R$ be the unique speed of time periodic traveling wave for \eqref{eq-p-bis}. There holds $\lim_{T\to 0^+} d_T=d_0$, and $\lim_{T\to+\infty} d_T=d_*$, where
\begin{equation}\label{speed-d0}
	d_0=\sqrt{2\int_0^1 a(s)ds}\left (\frac{1}{2}-\frac{\displaystyle\int_0^1a(s)b(s)ds}{\displaystyle\int_0^1 a(s)ds}\right),
\end{equation}
and
\begin{equation}\label{speed-d*}
	d_*=\int_0^1  \sqrt{2a(s)} \left(\frac{1}{2}- b(s)\right) ds.
\end{equation}
\end{cor}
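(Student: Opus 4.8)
\emph{Sketch of proof.} The plan is to view \eqref{eq-p-bis} as the instance of \eqref{eq-main} given by the nonlinearity $f(t,u)=a(t)\,u\,(u-b(t))\,(1-u)$ and to invoke Theorems \ref{th-rapid} and \ref{th-slow}. First I would check the hypotheses in this periodic setting. Since $a,b$ are $1$-periodic, $f$ is $1$-periodic in $t$, so (A1) holds; writing $f(t,u)=a(t)\bigl(-u^{3}+(1+b(t))u^{2}-b(t)u\bigr)$ one gets $\partial_u f(t,0)=-a(t)b(t)$ and $\partial_u f(t,1)=-a(t)(1-b(t))$, both bounded above by some $-\gamma_0<0$ uniformly in $t$ because $a$ is continuous and positive and $b$ takes values in a compact subinterval of $(0,1)$; together with smoothness this yields (A2). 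Condition (A3) is the existence of the periodic traveling wave $V_T(t,x-d_Tt)$ quoted from \cite{abc} in the statement, a periodic traveling wave being in particular an almost periodic one. Finally, as recalled in the introduction, in the periodic case an almost periodic traveling wave reduces to a periodic one whose constant speed, by the uniqueness in Theorem \ref{th-known}(iii), equals both $d_T$ and the average speed $\bar{c}_T$ of \eqref{mean-speed}; hence it suffices to identify $\lim_{T\to0^+}\bar{c}_T$ and $\lim_{T\to+\infty}\bar{c}_T$.

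For the rapidly oscillating limit I would compute the average \eqref{mean-f} by integrating over one period, obtaining $\bar{f}(u)=\bar{a}\,u(1-u)(u-b_0)$ with $\bar{a}=\int_0^1 a(s)\,ds>0$ and $b_0=\bigl(\int_0^1 a(s)b(s)\,ds\bigr)/\bar{a}$, and one checks $b_0\in(\inf_{\R}b,\sup_{\R}b)\subset(0,1)$. Thus \eqref{mean-eq} is the cubic (Nagumo) bistable equation $u_t=u_{xx}+\bar{a}\,u(1-u)(u-b_0)$, which admits a decreasing traveling wave connecting $0$ and $1$ (see \cite{fm}), so (A4) holds; the substitution $\xi\mapsto\xi/\sqrt{\bar{a}}$ turns \eqref{tw-homo} into the normalized profile equation $\psi''+(c_0/\sqrt{\bar{a}})\psi'+\psi(1-\psi)(\psi-b_0)=0$, whose wave speed is $\sqrt{2}\,(1/2-b_0)$ (with profile $1/(1+\me^{\cdot/\sqrt{2}})$, as in the footnote in Section 1.1). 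Hence $c_0=\sqrt{2\bar{a}}\,(1/2-b_0)$, which is exactly $d_0$ in \eqref{speed-d0}, and Theorem \ref{th-rapid} gives $\lim_{T\to0^+}\bar{c}_T=c_0=d_0$.

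For the slowly oscillating limit I would first verify (A5). In this periodic setting the hull $H(f)$ is compact and consists of the time shifts $(t,u)\mapsto a(t+\theta)\,u(u-b(t+\theta))(1-u)$, $\theta\in\R$; for every $g\in H(f)$ and every $s\in\R$ the frozen nonlinearity $g(s,\cdot)$ is again cubic bistable with positive leading coefficient and interior zero in $(0,1)$, so \eqref{frozen-eq-g} has a decreasing traveling wave connecting $0$ and $1$, i.e.\ (A5) holds. The same explicit computation, now with $(\bar{a},b_0)$ replaced by $(a(s),b(s))$, gives $c(f\cdot s)=\sqrt{2a(s)}\,(1/2-b(s))$, which is $1$-periodic in $s$; hence the limit in \eqref{limit-c*} exists and equals $c_*=\int_0^1\sqrt{2a(s)}\,(1/2-b(s))\,ds=d_*$, and Theorem \ref{th-slow} yields $|\bar{c}_T-d_*|\le M_2/T$ for large $T$, in particular $\lim_{T\to+\infty}\bar{c}_T=d_*$.

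All of this is elementary once Theorems \ref{th-rapid} and \ref{th-slow} are available; the only points that need a little care are checking that $b_0$ and each $b(s)$ lie in $(0,1)$, so that the averaged and frozen equations are genuinely bistable, and the description of $H(f)$ used for (A5). I do not expect any real obstacle beyond recalling the closed form $\sqrt{2A}\,(1/2-\beta)$ for the speed of the cubic wave of $u_t=u_{xx}+A\,u(1-u)(u-\beta)$, $A>0$, $\beta\in(0,1)$, and the scaling $\xi\mapsto\xi/\sqrt{A}$ that produces it.
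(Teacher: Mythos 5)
Your proposal is correct and follows exactly the route the paper has in mind: the paper states Corollary~\ref{cor-bistable-limit} as ``an easy consequence of Theorems~\ref{th-rapid} and~\ref{th-slow}'' without writing out a proof, and your verification of (A1)--(A5) together with the explicit cubic wave speed $\sqrt{2A}\,(1/2-\beta)$ for $u_t=u_{xx}+A\,u(1-u)(u-\beta)$ is precisely the computation being invoked. The identifications $\bar{c}_T=d_T$, $c_0=d_0$ and $c_*=d_*$ are all checked correctly, including the needed observation that the $a$-weighted average $b_0$ lies in $(0,1)$.
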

%Here, we use the fact that for given $a_0>0$ and $b_0\in (0,1)$, $v(t,x)=\psi_0(\sqrt{a_0}(x-\sqrt{2a_0}(1/2-b_0)t)$ is a traveling wave of the homogeneous equation $v_t=v_{xx}+a_0v(1-v)(v-b_0)$, where $\psi_0(\cdot)$ is the function given in \eqref{profile-psi0}.

Furthermore, we have the following observations.

\begin{pro}\label{pro-effects}
Let $d_0$ and $d_*$ be the constants given by \eqref{speed-d0} and \eqref{speed-d*}, respectively. Then, the following statements hold true:
\begin{itemize}
	\item [{\rm (i)}] If $a=a_0$ is a positive constant, then $d_T=d_0=d_*$ for all $T>0$;
	\item [{\rm (ii)}] If $b=b_0\in (0,1)$ is a constant, then $d_0\geq d_*>0$ when $0<b_0<1/2$, and $d_0\leq d_*<0$ when $1/2<b_0<1$; moreover, the strict inequalities hold true as soon as $a(\cdot)$ is not a constant;
	\item [{\rm (iii)}] There exist $1$-periodic functions $a\in C^1(\R,(0,\infty))$ and $b\in C^1(\R,(0,1))$ such that $d_0$ and $d_*$ have different signs. 
\end{itemize} 	
\end{pro}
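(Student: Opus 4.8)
The three parts are of different natures and I would treat them separately; beyond the explicit formulas \eqref{speed-d0}--\eqref{speed-d*} and the uniqueness of the periodic traveling wave (Theorem \ref{th-known}(iii)), only elementary inequalities enter. For (i), when $a\equiv a_0$ the nonlinearity of \eqref{eq-p-bis} is $a_0\,v(v-b(t/T))(1-v)$, and the key point — already exploited for \eqref{bis-1-eq} — is that the cubic traveling wave profile is insensitive to the middle zero. Setting $\Phi(\xi)=\big(1+\me^{\sqrt{a_0}\,\xi/\sqrt2}\big)^{-1}$, a direct substitution shows that for every frozen value $b_0$ the pair $\big(\Phi,\sqrt{2a_0}(\tfrac12-b_0)\big)$ solves the corresponding homogeneous traveling wave equation; since $\Phi$ does not depend on $b_0$, the function
\[
v_T(t,x)=\Phi\!\left(x-\int_0^t\sqrt{2a_0}\Big(\tfrac12-b(s/T)\Big)\,ds\right)
\]
is an (almost) periodic traveling wave of \eqref{eq-p-bis} with $c_T'(t)=\sqrt{2a_0}(\tfrac12-b(t/T))$. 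By uniqueness, $d_T$ equals the mean of $c_T'$, namely $\sqrt{2a_0}\big(\tfrac12-\int_0^1 b\big)$, which is independent of $T$; evaluating \eqref{speed-d0}--\eqref{speed-d*} at $a\equiv a_0$ gives this same value for $d_0$ and $d_*$, which proves (i).

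For (ii), setting $b\equiv b_0$ in \eqref{speed-d0}--\eqref{speed-d*} yields $d_0=(\tfrac12-b_0)\sqrt{2\int_0^1 a}$ and $d_*=(\tfrac12-b_0)\int_0^1\sqrt{2a}$. Jensen's inequality for the strictly concave function $t\mapsto\sqrt t$ gives $\int_0^1\sqrt{a}\le\sqrt{\int_0^1 a}$, with equality only if $a$ is constant; hence $d_0$ and $d_*$ share the sign of $\tfrac12-b_0$ and $|d_*|\le|d_0|$. A case distinction on whether $b_0<\tfrac12$ or $b_0>\tfrac12$ then converts this into the two claimed chains $d_0\ge d_*>0$ and $d_0\le d_*<0$, with the strict inequalities holding exactly when Jensen is strict, i.e. when $a$ is non-constant.

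For (iii), note from \eqref{speed-d0}--\eqref{speed-d*} that $\mathrm{sgn}(d_0)=\mathrm{sgn}\!\big(\int_0^1 a(s)(\tfrac12-b(s))\,ds\big)$ and $\mathrm{sgn}(d_*)=\mathrm{sgn}\!\big(\int_0^1\sqrt{a(s)}(\tfrac12-b(s))\,ds\big)$, so it suffices to produce $a,b$ for which these two averages have opposite signs. I would first use a two-value template on the period: $a=a_1$, $b=b_1<\tfrac12$ on $(0,\tfrac12)$ and $a=a_2$, $b=b_2>\tfrac12$ on $(\tfrac12,1)$, for which the two averages are proportional to $a_1(\tfrac12-b_1)-a_2(b_2-\tfrac12)$ and $\sqrt{a_1}(\tfrac12-b_1)-\sqrt{a_2}(b_2-\tfrac12)$ respectively. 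These have opposite signs precisely when
\[
\sqrt{a_1/a_2}\;<\;\frac{b_2-\tfrac12}{\tfrac12-b_1}\;<\;a_1/a_2 ,
\]
a nonempty window — for instance $a_1=4$, $a_2=1$, $b_1=2/5$, $b_2=4/5$ makes the first average positive and the second negative. I would then replace the template by $C^1$, $1$-periodic functions $a\in C^1(\R,(0,\infty))$ and $b\in C^1(\R,(0,1))$ coinciding with it off arbitrarily thin transition layers; since $(a,b)\mapsto\int_0^1 a(\tfrac12-b)$ and $(a,b)\mapsto\int_0^1\sqrt a(\tfrac12-b)$ are $L^1$-continuous, the strict sign conditions survive, so the mollified pair satisfies $d_0>0>d_*$.

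The only real obstacle is the construction in (iii): one must notice that the obvious symmetric choices (equal weights with $b$ symmetric about $\tfrac12$, or an $a$ that is too lopsided) force the two averages to have the \emph{same} sign, and that a sign discrepancy requires the intermediate window displayed above, whose nonemptiness is precisely the strict concavity of $t\mapsto\sqrt t$. Parts (i) and (ii) are then routine, once the explicit cubic wave and Jensen's inequality are in hand.
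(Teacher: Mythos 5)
Your proof is correct and complete.

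Parts (i) and (ii) are treated exactly in the spirit of the paper, which records them as ``easily verified'': (i) follows from the explicit cubic profile $\Phi(\xi)=(1+\me^{\sqrt{a_0}\xi/\sqrt{2}})^{-1}$ whose insensitivity to the middle zero $b_0$ mirrors the discussion of \eqref{bis-1-eq}, and (ii) is the Cauchy--Schwarz/Jensen inequality $\int_0^1\sqrt{a}\le\sqrt{\smash[b]{\int_0^1 a}}$ with equality iff $a$ is constant. (A small terminological note: with $b$ $1$-periodic your $v_T$ is a genuine $T$-periodic traveling wave, which is what makes $d_T=\bar c_T=$ mean of $c'_T$ immediate via the uniqueness in Theorem~\ref{th-known}(iii) and the paper's observation that $\bar c_T$ equals the constant periodic wave speed.)

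For (iii) your route differs from the paper's, and in a useful way. The paper fixes $\tilde b(t)=\tfrac12+\tfrac14\sin(2\pi t)$ and approximates a three-valued step $\tilde a$, then evaluates the two integrals by a direct (somewhat opaque) computation. You instead take a two-valued template for both $a$ and $b$, reduce both signs to the weighted differences $a_1(\tfrac12-b_1)-a_2(b_2-\tfrac12)$ and $\sqrt{a_1}(\tfrac12-b_1)-\sqrt{a_2}(b_2-\tfrac12)$, and isolate the exact window
\[
\sqrt{a_1/a_2}\;<\;\frac{b_2-\tfrac12}{\tfrac12-b_1}\;<\;a_1/a_2
\]
in which a sign flip occurs. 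This exposes the mechanism — the gap between $\sqrt{r}$ and $r$ for $r>1$, i.e.\ the strict concavity of the square root that already governs (ii) — as the precise source of the discrepancy, which the paper's numerical example does not make visible. Both proofs then close by the same mollification/continuity step. Your numbers $(a_1,a_2,b_1,b_2)=(4,1,2/5,4/5)$ indeed give $\frac12[4\cdot\tfrac1{10}-\tfrac3{10}]=\tfrac1{20}>0$ and $\frac12[2\cdot\tfrac1{10}-\tfrac3{10}]=-\tfrac1{20}<0$, so $d_0>0>d_*$; the mollified $C^1$ $1$-periodic $a$ (note that you must smooth both at $\tfrac12$ and at $0\equiv1$, since $a_1\ne a_2$) and $b$ keep the strict signs. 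So the proposal is sound; for (iii) it is a cleaner argument than the paper's, at the cost of being an existence argument rather than a single closed-form example.
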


Statement (i) follows directly from the discussion on equation \eqref{bis-1-eq}, and statement (ii) is also easily verified.  One can observe from (ii) that when $b=b_0$ is a constant and $b_0\neq 1/2$, the propagation of equation \eqref{eq-p-bis} exhibits a faster wave speed when associated with high-frequency oscillations, in contrast to its propagation with low-frequency ones. This leads us to conjecture that, in this special case, $|d_T|$ is nonincreasing with respect to $T>0$, and consequently, $d_T$ has a fixed sign independent of $T>0$, which is the sign of $\int_0^1\int_0^1 f(t,u)dudt$.  

On the other hand, statement (iii) implies that,  $d_T$ cannot have a fixed sign for all $T>0$ in general. 
As a matter of fact, based on Corollary \ref{cor-bistable-limit}, it is not difficult to find examples such that $d_0$ and $d_*$ exhibit opposite signs. Below, we provide an example by some approximation arguments. More precisely, for each $T>0$ and $n\in\N$, we consider the following equation
\begin{equation}\label{eq-bistable-n}
	\partial_t v_n= \partial_{xx}v_n+\tilde{a}_n\left(t/T\right)v_n\left(v_n-\tilde{b}\left(t/T\right)\right)(1-v)\ \  \hbox{ for }\, t\in\R,\, x\in\R,
\end{equation}
where 
$$\tilde{b}(t)=\frac{1}{2}+\frac{1}{4}sin(2\pi t) \ \ \hbox{ for }\, t\in\R,$$
and $\tilde{a}_n\in C^1(\R)$ is positive, $1$-periodic, and as $n\to+\infty$, it converges pointwise to the following step function 
\begin{equation*}\tilde{a}(t)=\left\{\baa{ll}
	1  & \hbox{if }\,  t\in (k,1/4+k],\vspace{5pt}\\
	64/9 & \hbox{if }\,  t\in (k+1/4,k+1/2],\vspace{5pt}\\
	4  & \hbox{if }\,  t\in (k+1/2,k+1],\eaa  \quad \hbox{for }\, k\in\N. \right.
\end{equation*}
We may further require that the sequence $(\tilde{a}_n)_{n\in\N}$ is uniformly bounded. For each $T>0$ and $n\in\N$, denote by $d^n_T$ the unique wave speed of periodic traveling wave solution for \eqref{eq-bistable-n}. It then follows from  Corollary \ref{cor-bistable-limit} that for each $n\in\N$, $\lim_{T\to 0^+} d^n_T=d^n_0$ and $\lim_{T\to +\infty} d^n_T=d^n_*$, where $d^n_0$ and $d^n_*$ are, respectively, the limits defined as in \eqref{speed-d0} and \eqref{speed-d*} with $a(\cdot)$ and $b(\cdot)$ replaced by $\tilde{a}_n(\cdot)$ and $\tilde{b}(\cdot)$ respectively. Recall that $\tilde{a}_n(t)\to \tilde{a}(t)$ as $n\to\infty$ for $t\in\R$ and that $(\tilde{a}_n)_{n\in\N}$ is uniformly bounded. By using the Lebesgue dominated convergence theorem, one computes that
 $$\lim_{n\to\infty} d^n_0 = -\frac{1}{6\sqrt{290} \pi}<0 \quad\hbox{and}\quad \lim_{n\to\infty} d^n_*= \frac{\sqrt{2}}{24 \pi}>0.$$
Consequently, for sufficiently large $n$,  the signs of $d^n_0$ and $d^n_*$ are different. This example implies that,  
in sharp contrast with the result that the spreading speed $c^*_T$ is independent of $T$ under the Fisher-KPP assumption \eqref{fisher-kpp}, temporal variations significantly influence the wave propagation of bistable equations, as it does not only affect the quantities of speeds, but may also alter the direction of propagation.

%%%%%%%%%%%%%%%%%%%%%%%%%%%%%%%%%%%%%%%%
%%%%%%%%%%%%%%%%%%%%%%%%%%%%%%%%%%%%%%%%

\SE{Preliminary}

In this section, we present some preliminary materials for later use. We first recall the definition of almost periodic functions and collect some basic properties. Then we present a way of normalization for time almost periodic traveling waves. 

\subsection{Almost periodic functions}

We first recall Bohr's definition of almost periodic functions.

\begin{defi}\label{df-almost}
A continuous function $f:\R\to \R$, $t\mapsto f(t)$ is said to be almost periodic if, for any $\epsilon>0$, the following set $$\Omega(\epsilon)=\{\tau:\|f(\cdot+\tau)-f(\cdot)\|_{L^{\infty}(\R)}<\epsilon\}$$
is relatively dense in $\R$. Furthermore, a continuous function $f:\R\times\R\to \R$, $(t,u)\mapsto f(t,u)$ is said to be almost periodic in $t$ uniformly with respect to $u$ in bounded sets (resp. $u\in\R$) if $f$ is almost periodic in $t$ for each $u\in\R$, and is uniformly continuous on $\R\times E$ for any bounded subset $E\subset \R$ (resp. $E=\R$).	
\end{defi}

It is well known that almost periodic functions can be equivalently defined as follows.

\begin{lem}\label{eqdf-almost}
The following statements hold true:
\begin{itemize}
	\item [{\rm (i)}] A function  $f\in C(\R,\R)$ is almost periodic if and only if for any sequence $(t'_n)\subset \R$ and $(s'_n)\subset \R$, there are subsequences $(t_n)\subset (t'_n)$ and $(s_n)\subset (s'_n)$ such that
	$$\lim_{m\to\infty}\lim_{n\to\infty} f(t+t_n+s_m)=\lim_{n\to\infty} f(t+t_n+s_n)\,\hbox{ pointwise in }\,t\in\R;$$
	
	\item [{\rm (ii)}] A function  $f\in C(\R\times\R,\R)$ is almost periodic function in $t$ uniformly with respect to $u$ in bounded sets (resp. $u\in\R$) if and only if for any sequence $\{t_n'\}\subset \R$, there exists a subsequence $\{t_n\} \subset \{t_n'\}$ such that $\lim_{n\to\infty}f(t+t_n,u)$ exists uniformly for $t\in\R$ and $u$ in bounded sets (resp. $u\in\R$).
\end{itemize}	
\end{lem}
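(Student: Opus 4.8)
The statement is the classical Bohr--Bochner characterization of (uniformly) almost periodic functions, and my plan is to route everything through one pivot: \emph{for a bounded continuous function, Bohr almost periodicity is equivalent to relative compactness of its family of translates in the uniform topology.} For part~(i) I would establish the chain ``$f$ Bohr almost periodic $\Longleftrightarrow$ $\{f(\cdot+\tau):\tau\in\R\}$ is precompact in $C_b(\R)$ with the sup-norm $\Longleftrightarrow$ the double-sequence condition''. For part~(ii) I would carry out exactly the same argument with the sup-norm taken over $\R\times E$ for each bounded set $E\subset\R$, using that a function almost periodic in $t$ uniformly for $u\in E$ is bounded and uniformly continuous on $\R\times E$, and then diagonalize along an exhaustion $E_1\subset E_2\subset\cdots$ of $\R$ by bounded sets.

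The forward (easy) implications go as follows. If $f$ is Bohr almost periodic then it is bounded and uniformly continuous (a standard consequence of Definition~\ref{df-almost}); given $\epsilon>0$, let $\ell$ be an inclusion length for the relatively dense set of $(\epsilon/3)$-almost periods, let $\delta>0$ be a modulus-of-continuity threshold for $\epsilon/3$, and pick grid points $0=x_0<\dots<x_p=\ell$ of mesh $<\delta$. Any $\tau\in\R$ splits as $\tau=\sigma+r$ with $\sigma$ an $(\epsilon/3)$-almost period lying in $[\tau-\ell,\tau]$ and $r\in[0,\ell]$; by translation invariance of the sup-norm, $\|f(\cdot+\tau)-f(\cdot+r)\|_\infty=\|f(\cdot+\sigma)-f\|_\infty<\epsilon/3$, and $\|f(\cdot+r)-f(\cdot+x_j)\|_\infty<\epsilon/3$ for the nearest grid point $x_j$. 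Hence $\{f(\cdot+x_j)\}_{j=0}^p$ is a finite $\epsilon$-net, the translates are totally bounded, hence relatively compact, hence every sequence of translates has a uniformly convergent subsequence. The double-sequence condition then follows by nested extraction: choose $(t_n)\subset(t'_n)$ with $f(\cdot+t_n)\to f^*$ uniformly; since $f^*$ is again almost periodic (a uniform limit of translates of $f$), choose $(s_n)\subset(s'_n)$ with $f^*(\cdot+s_n)\to f^{**}$ uniformly. Then for each fixed $m$, $\lim_{n\to\infty}f(t+t_n+s_m)=f^*(t+s_m)$, so the iterated limit equals $\lim_{m\to\infty}f^*(t+s_m)=f^{**}(t)$; and $|f(t+t_n+s_n)-f^{**}(t)|\le\|f(\cdot+t_n)-f^*\|_\infty+\|f^*(\cdot+s_n)-f^{**}\|_\infty\to0$, so the diagonal limit is $f^{**}(t)$ as well.

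For the converse I would first note that the double-sequence condition, specialized to $s'_n\equiv0$, already forces $f$ to be bounded and every sequence of translates to have a pointwise-convergent subsequence. Next comes the genuine work: upgrading pointwise convergence of translates to uniform convergence. If $f(\cdot+t_n)\to g$ only pointwise, there exist $\epsilon_0>0$ and points $z_n$ with $|f(z_n+t_n)-g(z_n)|\ge\epsilon_0$; since $g(z)=\lim_m f(z+t_m)$, this reads $|f(z_n+t_n)-\lim_m f(z_n+t_m)|\ge\epsilon_0$, and by feeding suitably chosen sequences built from $(t_n)$ and $(z_n)$ into the double-sequence condition one produces a pair for which the iterated and diagonal limits disagree, contradicting the hypothesis. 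Once the translate family is known to be relatively compact in $C_b(\R)$, almost periodicity follows cleanly: for $\epsilon>0$ take a finite $(\epsilon/2)$-net $\{f(\cdot+\tau_j)\}_{j=1}^m$ with $|\tau_j|\le L$ for all $j$; for arbitrary $c\in\R$ pick $j$ with $\|f(\cdot+c)-f(\cdot+\tau_j)\|_\infty<\epsilon/2$, so that, again by translation invariance, $\|f(\cdot+(\tau_j-c))-f\|_\infty<\epsilon/2<\epsilon$, i.e.\ $\tau_j-c$ is an $\epsilon$-almost period; since $|\tau_j|\le L$ and $c$ is arbitrary, the $\epsilon$-almost periods meet every interval of length $2L$, hence are relatively dense.

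The main obstacle is precisely that converse upgrade from pointwise to uniform convergence of translates: this is exactly the step where the extra clause ``iterated limit $=$ diagonal limit'' is indispensable (it is what separates almost periodic from merely almost automorphic functions), and the delicate technical point is the bookkeeping of the simultaneous subsequence extractions, arranged so that the manufactured disagreement between the two limits survives the re-indexing. For part~(ii) the only additional ingredient is to replace $\|\cdot\|_{L^\infty(\R)}$ by $\|\cdot\|_{L^\infty(\R\times E)}$ throughout, to observe that the $\epsilon$-almost periods common to all $u\in E$ remain relatively dense, and then to diagonalize over an exhaustion of $\R$ by bounded sets.
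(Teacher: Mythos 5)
The paper does not actually prove this lemma: its ``proof'' is a one-line citation to \cite[Theorems 1.17, 2.7 and 2.8]{fink}, so there is no in-paper argument to compare against. Your sketch reconstructs exactly the content of those theorems---Bochner's criterion via precompactness of the translate family $\{f(\cdot+\tau)\}$ in $C_b(\R)$, then the double-sequence condition by nested extraction---so it is the ``same'' approach in spirit, just carried out rather than cited. Your forward direction is complete and correct: total boundedness of translates from the almost-period decomposition $\tau=\sigma+r$, precompactness, and then the two-stage subsequence extraction $f(\cdot+t_n)\to f^*$, $f^*(\cdot+s_n)\to f^{**}$ with both iterated and diagonal limits equal to $f^{**}$. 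The precompactness-to-Bohr step at the end is also right (finite net in $[-L,L]$, translation invariance, relative density).

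The one place that is genuinely incomplete is the converse upgrade from pointwise to uniform convergence of translates in part~(i), and you are right to flag it---but flagging is not filling. As written, ``by feeding suitably chosen sequences built from $(t_n)$ and $(z_n)$ into the double-sequence condition one produces a pair for which the iterated and diagonal limits disagree'' is a placeholder, not an argument; this step is precisely where the entire theorem lives (it is what fails for almost automorphic but not almost periodic functions). A careful version would, from $|f(z_k+t_{n_k})-g(z_k)|\ge\epsilon_0$ and $g(z_k)=\lim_m f(z_k+t_m)$, pick $m_k\gg n_k$ with $|f(z_k+t_{n_k})-f(z_k+t_{m_k})|\ge\epsilon_0/2$, then apply the double-sequence hypothesis to the two sequences $(z_k+t_{n_k})_k$ and $(t_{m_k}-t_{n_k})_k$ and track the re-indexing through the simultaneous extraction to show the iterated and diagonal limits cannot both exist and agree. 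That bookkeeping is exactly what \cite[Theorem~1.17]{fink} supplies, and it is nontrivial enough that leaving it as a hand-wave makes the converse unproved. The extension to part~(ii) via $\|\cdot\|_{L^\infty(\R\times E)}$ and a diagonalization over an exhaustion by bounded sets is the right structure and matches Fink's Theorems 2.7--2.8, modulo the same caveat.
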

\begin{proof}
This lemma follows directly from \cite[Theorems 1.17, 2.7 and 2.8]{fink}.
\end{proof}

Let  $f\in C(\R\times\R,\R)$ be an almost periodic function in $t$ uniformly with respect to $u$ in bounded sets.
Let  $H(f)$ be the {\it hull} of the function $f$, which consists of functions $g\in C(\R\times\R,\R)$ such that
$$ f(t+\tau_n,u) \to g(t,u) \,\hbox{ as }\, n\to\infty\, \hbox{ uniformly in }\,t\in\R \,\hbox{ and }\, u\,\hbox{ in bounded sets} $$
for some $(\tau_n)\subset\R$. It is known that $H(f)$ is compact and metrizable under the compact open topology (see e.g., \cite{fink,shen1,sy}). Moreover, we have the following lemma.

\begin{lem}\label{almost-partial}
Let $f\in C(\R\times\R,\R)$ be an almost periodic function in $t$ uniformly with respect to $u$ in a subset $E\subset  \R$. Assume that $\partial_u f$, $\partial_{uu}f$ exist and they are bounded in $t\in\R$ and $u\in E$. Then for each $g\in H(f)$, $\partial_ug$ exists, and it is almost periodic in $t$ uniformly with respect to $u\in E$. Moreover, $g$, $\partial_ug$ are bounded in $t\in\R$ and $u\in E$ uniformly for $g\in H(f)$.
\end{lem}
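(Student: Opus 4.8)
The plan is to deduce the entire lemma from a single Landau--Kolmogorov type interpolation inequality which upgrades uniform control of a function to uniform control of its $u$-derivative, at the cost of the standing bound on $\partial_{uu}f$. First I would reduce to the case that $E$ is an interval (replace $E$ by its convex hull; by the standing hypotheses $\partial_uf$ and $\partial_{uu}f$ stay bounded on $\R\times\mathrm{conv}(E)$, and almost periodicity uniformly in $u\in\mathrm{conv}(E)$ is a stronger conclusion). Put $C:=\sup_{\R\times E}|\partial_{uu}f|$, so that every time translate $\partial_uf(t+\tau,\cdot)$ is $C$-Lipschitz on $E$ uniformly in $t,\tau\in\R$. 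The inequality I will use is: if $h\in C^1(E)$ and $h'$ is $L$-Lipschitz on $E$, then $\|h'\|_{L^\infty(E)}\le 2\sqrt{L\,\|h\|_{L^\infty(E)}}$ as soon as $\|h\|_{L^\infty(E)}$ is small (with no smallness restriction when $E$ is unbounded); it follows by expanding $h(x\mp\eta)=h(x)\mp\eta\,h'(x)+O(L\eta^2)$ for a step $\eta$ taken into the longer half of $E$ around $x$ and optimizing in $\eta$.

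For the existence and the uniform bounds of $\partial_ug$: fix $g\in H(f)$ and $(\tau_n)\subset\R$ with $f(\cdot+\tau_n,\cdot)\to g$ uniformly in $t\in\R$ and $u\in E$. Applying the interpolation inequality to $h=f(t+\tau_n,\cdot)-f(t+\tau_m,\cdot)$, whose $u$-derivative is $2C$-Lipschitz, yields, for $n,m$ large,
\[
\sup_{t\in\R}\big\|\partial_uf(t+\tau_n,\cdot)-\partial_uf(t+\tau_m,\cdot)\big\|_{L^\infty(E)}\le 2\sqrt{2C\,\sup_{t\in\R}\big\|f(t+\tau_n,\cdot)-f(t+\tau_m,\cdot)\big\|_{L^\infty(E)}}\longrightarrow 0
\]
as $n,m\to\infty$, so $\partial_uf(\cdot+\tau_n,\cdot)$ converges uniformly on $\R\times E$ to some $\psi$. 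Since $f(\cdot+\tau_n,\cdot)\to g$ and $\partial_uf(\cdot+\tau_n,\cdot)\to\psi$ both uniformly, the classical differentiation-of-uniform-limits theorem gives $g(t,\cdot)\in C^1(E)$ with $\partial_ug=\psi$ on $\R\times E$; the bounds $\|g\|_{L^\infty}\le\|f\|_{L^\infty}$ and $\|\partial_ug\|_{L^\infty}\le\|\partial_uf\|_{L^\infty}$ and the $C$-Lipschitz continuity of $\partial_ug(t,\cdot)$ pass to the limit and are independent of $g$ and of $(\tau_n)$, hence hold uniformly over $g\in H(f)$.

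For the almost periodicity: given $g\in H(f)$ — which is itself almost periodic in $t$ uniformly on bounded $u$-sets — I would verify the Bochner criterion (Lemma~\ref{eqdf-almost}(ii)): for any $(\sigma'_m)\subset\R$, extract $(\sigma_m)$ so that $g(\cdot+\sigma_m,\cdot)$ converges uniformly in $t\in\R$ and $u$ in bounded sets, and then apply the interpolation inequality to $h=g(t+\sigma_m,\cdot)-g(t+\sigma_k,\cdot)$ — legitimate because $\partial_ug$ is $C$-Lipschitz in $u$ by the previous step — to get
\[
\sup_{t\in\R}\big\|\partial_ug(t+\sigma_m,\cdot)-\partial_ug(t+\sigma_k,\cdot)\big\|_{L^\infty(E)}\le 2\sqrt{2C\,\sup_{t\in\R}\big\|g(t+\sigma_m,\cdot)-g(t+\sigma_k,\cdot)\big\|_{L^\infty(E)}}\longrightarrow 0
\]
as $m,k\to\infty$; thus $\partial_ug(\cdot+\sigma_m,\cdot)$ is uniformly Cauchy on $\R\times E$, which is exactly the required property.

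The one genuine idea is the interpolation inequality, together with the realization that it is the right tool: the hull topology supplies, for free, uniform convergence of the functions, and this inequality is precisely what converts it into uniform convergence of the $u$-derivatives using only the uniform bound on $\partial_{uu}f$ — in particular one never needs $\partial_uf$ itself to be almost periodic in $t$, which it generally is not. Everything else is routine. The point requiring a little care is the geometry of $E$: the interpolation needs room to take a Taylor step inside $E$, so it forces $E$ to be an interval of positive length, whence the convex-hull reduction at the outset; for a degenerate $E$ the conclusion about $\partial_ug$ is vacuous.
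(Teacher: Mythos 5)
Your proof is correct and its engine is the same as the paper's: both exploit the uniform bound $|\partial_{uu}f|\le C$ to upgrade sup-norm control of $f$ to sup-norm control of $\partial_u f$ at a square-root loss. You package this as a Landau--Kolmogorov inequality applied to differences of translates; the paper packages it as the statement that the difference quotients $(f(t,u+h)-f(t,u))/h$ converge to $\partial_u f(t,u)$ uniformly in $(t,u)$ as $h\to 0$, deduces from this that $\partial_u f$ is a uniform limit of almost periodic functions and hence is itself almost periodic in $t$ uniformly in $u\in E$, and then extracts a common subsequence along which both $f(\cdot+t_n,\cdot)$ and $\partial_u f(\cdot+t_n,\cdot)$ converge. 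These are two outfits on the same estimate: your optimization of $|h'(x)|\le 2\|h\|_\infty/\eta+L\eta/2$ is precisely the Taylor bound on the difference quotient of $h=f(\cdot+\tau,\cdot)-f(\cdot,\cdot)$. The one structural difference is that you carry the Cauchy property from $f(\cdot+\tau_n,\cdot)$ directly to $\partial_u f(\cdot+\tau_n,\cdot)$ along the same sequence, so you never need to assert the almost periodicity of $\partial_u f$ as a separate intermediate fact nor pass to a further subsequence; that is a mild but genuine streamlining.

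One remark in your commentary is incorrect: you say $\partial_u f$ ``generally is not'' almost periodic in $t$. Under the lemma's hypotheses it is, and your own inequality proves it -- if $\tau$ is an $\epsilon$-almost period of $f$ (uniformly in $u\in E$), then it is a $2\sqrt{2C\epsilon}$-almost period of $\partial_u f$, so the two share relatively dense sets of almost periods up to a square-root rescaling, which is exactly Bohr almost periodicity of $\partial_u f$ uniformly in $u$. This is in fact the paper's first step. Your claim would be true only if the bound on $\partial_{uu}f$ were dropped, but then your argument would not go through either.
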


\begin{proof}
Since $\partial_{uu} f(t,u)$ is bounded in $t\in\R$, $u\in E$, it follows that $\partial_{u} f(t,u)$ is uniformly continuous in $t\in\R$, $u\in E$ and that
$$ \lim_{h\to 0} \frac{f(t,u+h)-f(t,u)}{h}=\partial_u f(t,u) \,\hbox{ uniformly in }\, t\in\R,\,u\in E.$$
This together with the almost periodicity of $f$ implies that $\partial_{u} f(t,u)$ is almost periodic in $t$ uniformly with respect to $u\in E$.
Then, for each $g\in H(f)$, by Lemma \ref{eqdf-almost} (ii), there exists a sequence $(t_n)\subset \R$ such that
 $\lim_{n\to\infty} f(t+t_n,u)=g(t,u)$ and $\lim_{n\to\infty} \partial_u f(t+t_n,u)=g_1(t,u)$ uniformly in $t\in\R$ and $u\in E$
for some $g_1\in C(\R\times E)$. This immediately implies that $g_1$ is almost periodic in $t$ uniformly with respect to $u\in E$, and that $g_1=\partial_u g$. Furthermore, since $f$ and $\partial_u f$ are bounded in $t\in\R$ and $u\in E$, it is clear that $g$ and $\partial_u g$ are bounded in $t\in\R$ and $u\in E$ uniformly for $g\in H(f)$.
\end{proof}

\subsection{Normalization of almost periodic traveling waves}

For each    $T>0$, let $u_T(t,x)=U_T(t,x-c_T(t))$ be an almost periodic traveling wave solution of \eqref{eq-main}.
In Theorem \ref{th-known} (iii), we have stated that the wave profile $U_T(t,x)$ is unique up to spatial shifts by $C^1$ time almost periodic functions. The following lemma shows that we can normalize $U_T(t,x)$ for definiteness by requiring that
\begin{equation}\label{normal}
	U_T(t,0)=\frac{1}{2} \, \hbox{ for all } \, t\in\R.
\end{equation}
Clearly, with such a normalization, $U_T$ is uniquely determined.

\begin{lem}\label{lem-normal}
 Let {\rm (A1)-(A3)} hold. For each $T>0$, there exists a unique $C^1$ almost periodic function $t\mapsto z_T(t)$ such that the function $\tilde{U}_T(t,x):=U_T(t,x+z_T(t))$ is a wave profile of the almost periodic traveling wave solution $u_T(t,x)$ satisfying $\tilde{U}_T(t,0)=1/2$.
\end{lem}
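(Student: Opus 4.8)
\textbf{Proof proposal for Lemma \ref{lem-normal}.}

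The plan is to define $z_T(t)$ implicitly through the equation $U_T(t, z_T(t)) = 1/2$ and then verify that this defines a $C^1$ almost periodic function. First I would observe that for each fixed $t\in\R$, the map $x\mapsto U_T(t,x)$ is continuous, strictly decreasing on $\R$ (by Theorem \ref{th-known} (i)), and satisfies $U_T(t,-\infty)=1$, $U_T(t,+\infty)=0$ uniformly in $t$. Hence by the intermediate value theorem there exists a unique $z_T(t)\in\R$ with $U_T(t,z_T(t))=1/2$, and the uniform limits at $\pm\infty$ together with the strict monotonicity give a uniform bound $\sup_{t\in\R}|z_T(t)|<\infty$. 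Setting $\tilde U_T(t,x) := U_T(t,x+z_T(t))$ then automatically yields $\tilde U_T(t,0)=1/2$, and $\tilde U_T$ is a wave profile of $u_T$ since $u_T(t,x) = U_T(t,x-c_T(t)) = \tilde U_T(t, x - c_T(t) + z_T(t))$, i.e.\ it corresponds to the shift function $t\mapsto -z_T(t)$ added to $c_T$; one should check at the end that $t\mapsto z_T(t)$ being $C^1$ almost periodic makes $t\mapsto c_T(t) - z_T(t)$ admissible in the sense of Definition \ref{de-al-tw}.

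Next I would establish the regularity of $z_T$. By parabolic interior estimates applied to \eqref{eq-main}, $U_T(t,x)$ is $C^{1,2}_{t,x}$, and $\partial_x U_T(t,z_T(t)) < 0$ is in fact bounded away from $0$: indeed, since $z_T$ is bounded, $(t,z_T(t))$ ranges in a fixed compact strip, and on such a strip $\partial_x U_T$ is continuous and strictly negative, but to get a uniform bound one uses the Harnack-type / compactness argument via the hull — if $\inf_t |\partial_x U_T(t,z_T(t))|=0$, one extracts times $t_n$ along which $U_T(t+t_n,\cdot)$ converges (using almost periodicity of the profile) to a limiting profile whose derivative vanishes at the level-$1/2$ point, contradicting strict monotonicity of the limit, which itself is a traveling wave for a limiting equation $g\in H(f)$. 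With $\inf_t|\partial_x U_T(t,z_T(t))|>0$ in hand, the implicit function theorem gives $z_T\in C^1(\R)$ with $z_T'(t) = -\,\partial_t U_T(t,z_T(t))\big/\partial_x U_T(t,z_T(t))$, and the right-hand side is bounded since the numerator is bounded on the compact strip (again by parabolic estimates) and the denominator is bounded away from zero.

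Finally I would prove almost periodicity of $z_T$ using the sequential characterization in Lemma \ref{eqdf-almost} (i). Given a sequence $(t_n')$, use the joint almost periodicity of $U_T(t,x)$ (locally uniformly in $x$) together with that of $f$ to pass to a subsequence $(t_n)$ along which $U_T(t+t_n,x)\to U_\infty(t,x)$ locally uniformly and $f(t+t_n,u)\to g(t,u)$; the bounded family $z_T(t+t_n)$ also converges (after a further subsequence, pointwise, then one upgrades) to some $z_\infty(t)$ with $U_\infty(t,z_\infty(t))=1/2$. Because $U_\infty$ is again strictly decreasing with the same uniform limits, $z_\infty$ is uniquely determined by $U_\infty$, so the limit does not depend on the choice of subsequence in the diagonal sense required by Lemma \ref{eqdf-almost} (i); the same diagonal argument applied to a second sequence $(s_m')$ yields the required interchange-of-limits identity. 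Almost periodicity of $z_T$ then follows, and combined with the $C^1$ bound on $z_T'$ (which one checks is itself almost periodic by the analogous argument applied to $z_T'(t) = -\partial_t U_T/\partial_x U_T$ evaluated along the level set), this completes the proof; uniqueness of $z_T$ is immediate from the uniqueness of the level-$1/2$ point for each $t$.

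\textbf{Main obstacle.} The delicate point is the uniform lower bound $\inf_{t\in\R}|\partial_x U_T(t,z_T(t))| > 0$: pointwise strict negativity from Theorem \ref{th-known} (i) is not enough for the $C^1$ and almost-periodicity conclusions, and one genuinely needs a compactness argument through the hull $H(f)$, using that any limiting profile is still a (strictly monotone) bistable/multistable traveling wave. Everything else — existence and uniqueness of $z_T(t)$, the implicit function theorem step, and the sequential almost-periodicity argument — is routine once that uniform gradient bound is secured.
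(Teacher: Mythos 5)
Your proposal follows essentially the same route as the paper's proof: define $z_T(t)$ implicitly via $U_T(t,z_T(t))=1/2$, deduce boundedness of $z_T$ from the uniform limits of $U_T(t,\cdot)$ at $\pm\infty$, obtain $C^1$ regularity from the implicit function theorem (using $\partial_x U_T<0$), and conclude almost periodicity from the almost periodicity of the profile $U_T(t,x)$ in $t$ uniformly in $x$ over the compact strip where $z_T$ lives. The only structural difference is in how almost periodicity is extracted: the paper derives the quantitative estimate $|z_T(t+t'_m)-z_T(t+t'_n)|\le \beta_T^{-1}\,|U_T(t+t'_n,z_T(t+t'_n))-U_T(t+t'_m,z_T(t+t'_n))|$ by the mean value theorem and then applies the Bochner-type criterion in Lemma \ref{eqdf-almost}~(ii) directly, whereas you invoke the double-limit criterion (i) and argue through the strict monotonicity and uniqueness of the limiting profile. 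Both work and rely on the same two ingredients (boundedness of $z_T$ and a uniform negative bound on $\partial_x U_T$ over the strip), so this is a cosmetic difference rather than a different method.

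One place where you are actually more careful than the paper: the paper asserts the uniform bound $\partial_x U_T(t,x)\le -\beta_T$ on $\R\times[-C_T,C_T]$ with only the remark that $U_T(t,\cdot)$ is decreasing, which by itself does not give uniformity over the unbounded $t$-axis. Your observation that this bound genuinely requires a compactness argument through the hull $H(f)$ (translated profiles converge to a strictly monotone traveling wave of a limiting equation, ruling out a vanishing derivative along the level set) is correct and fills in a step the paper leaves implicit. Be aware that the strict monotonicity of the limit profile is itself Theorem \ref{th-known}~(i) applied to the limiting equation, so you should verify that (A1)--(A3) pass to the hull; this is standard but worth saying. Finally, there is a small sign slip in your verification: with $\tilde U_T(t,y)=U_T(t,y+z_T(t))$ one has $u_T(t,x)=\tilde U_T(t,x-c_T(t)-z_T(t))$, i.e.\ $\tilde c_T(t)=c_T(t)+z_T(t)$, not $c_T(t)-z_T(t)$ as your formula suggests. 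This does not affect the argument since in either case the required regularity and almost periodicity of $\tilde c_T'$ follow from those of $z_T$.
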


\begin{proof}
Since $\partial_{x}U_L(t,x)<0$ in $\R^2$ by Theorem \ref{th-known} (i), and since $U_L$ is at least of class $C^1(\R)$, the implicit function theorem yields the existence of a uniquely determined $C^1(\R)$ function $t\in\R\mapsto z_T(t)$  such that
\begin{equation}\label{initial-normal}
U_L(t,z_T(t))=\frac{1}{2}\,  \hbox{  for all } \, t\in\R.
\end{equation}
By definition, we have $U_T(t,x)\to 1$ as $x\to-\infty$ and $U_T(t,x)\to 0$ as $x\to+\infty$ uniformly in $t\in\R$. This implies that the function $z_T(t)$ is bounded in $\R$, i.e., there exists some constant $C_T>0$ such that
$|z_T(t)|\leq C_T$ for all $t\in\R$. Furthermore, by the fact that $U_L(t,x)$ is decreasing in $x\in\R$ again, one finds some constant $\beta_T>0$ such that
\begin{equation}\label{bounded-deri}
	\partial_{x} U_L(t,x)  \leq -\beta_T  \,\hbox{ for all } \, t\in\R,\,-C_T\leq x\leq C_T.
\end{equation}

Next, we prove that the function $z_T(t)$ is almost periodic. For any sequence $\{t_k'\}_{k\in\N}\subset \R$, it follows from \eqref{initial-normal} and \eqref{bounded-deri} that
$$\baa{ll}
&|z_T(t+t'_m)-z_T(t+t'_n)| \vspace{5pt}\\
\leq &  \displaystyle  \frac{1}{\min_{s\in\R,\,x\in [-C_T,C_T]}|\partial_{x}U_T(s,x)|} \Big|\underbrace{U_T(t+t'_m,z_T(t+t'_m))}_{=1/2}-U_T(t+t'_m,z_T(t+t'_n))\Big| \vspace{5pt}\\
\leq & \displaystyle \frac{1}{\beta_T} \Big|\underbrace{U_T(t+t'_n,z_T(t+t'_n))}_{=1/2}-U_T(t+t'_m,z_T(t+t'_n))  \Big|   \eaa$$
for all $n,m\in\N$ and $t\in\R$.  Since the function $U_T(t,x)$ is almost periodic in $t$ uniformly in $x\in [-C_T,C_T]$, by Lemma \ref{eqdf-almost} (ii), there exists a subsequence $\{t_k\} \subset \{t_k'\}$ such that $\lim_{k\to\infty}U_T(t+t_k,x)$ exists uniformly for $t\in\R$ and $x\in [-C_T,C_T]$.
Thus, for any $\epsilon>0$, there exists $N(\epsilon)>0$ such that
$$ \left|U_T(t+t_n,z_T(t+t_n))-U_T(t+t_m,z_T(t+t_n))  \right| \leq \epsilon  $$
for all $n,m\geq N(\epsilon)$ and $t\in\R$, whence we have $|z_T(t+t'_m)-z_T(t+t'_n)| \leq \epsilon/\beta_T$. By Lemma \ref{eqdf-almost} (ii) again, this implies that $z_T(t)$ is almost periodic in $t\in\R$.

Finally, set $\tilde{U}_T(t,x)=U_T(t,x+z_T(t))$. Then we have $u_T(t,x)=\tilde{U}_T(t,x-\tilde{c}_T(t))$, where $\tilde{c}_T(t)=c_T(t)+z_T(t)$.  Clearly, $\tilde{U}_T(t,x)$ is almost periodic in $t$ uniformly with respect to
$u$ in bounded sets, and $\tilde{c}_T'(t)$ is almost periodic in $t$. In other words, $\tilde{U}_T(t,x)$ is a wave profile of the almost periodic traveling wave solution $u_T(t,x)$ with speed $\tilde{c}_T'(t)$. Moreover, it follows directly from \eqref{initial-normal} that $\tilde{U}_T(t,0)=1/2$. The proof of Lemma \ref{lem-normal} is thus complete.
\end{proof}

%%%%%%%%%%%%%%%%%%%%%%%%%%%%%%%%%%%%%%%%
%%%%%%%%%%%%%%%%%%%%%%%%%%%%%%%%%%%%%%%%

\SE{Convergence in rapidly oscillating environments}

This section is devoted to the proof of Theorem \ref{th-rapid} on the limit of average speed $\bar{c}_T$ as $T\to 0^+$. We first prove the convergence and estimate the convergence rate under the additional assumption \eqref{assume-F} in Section 3.1. For clarity, we restate the result of this part as follows. 
\begin{theo}\label{restate-th} 
	Let {\rm (A1)-(A4)} and {\rm \eqref{assume-F}} hold. Then the estimate \eqref{es-con-rapid} holds true.
\end{theo}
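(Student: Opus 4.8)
\textbf{Proof proposal for Theorem \ref{restate-th}.}

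The plan is to combine the global stability of the almost periodic traveling wave (Theorem \ref{th-known} (ii)) with a super/sub-solution argument in which the perturbation of the homogenized wave $\phi_0(x-c_0 t)$ is driven precisely by the function $F$ from \eqref{define-F}. Concretely, for small $T>0$ I would look for sub- and super-solutions of \eqref{eq-main} of the form
\begin{equation*}
	u^{\pm}_T(t,x)=\phi_0\!\big(x-c_0 t \mp \sigma_T(t) \mp q\,\eta(1-\me^{-\beta t})\big)\pm \eta\,\me^{-\beta t} + T\,\theta^{\pm}(t,x),
\end{equation*}
where $\sigma_T(t)$ is a correction to the position that will eventually produce the speed correction, $\theta^{\pm}$ is a bounded profile perturbation of size $O(1)$ in $C^{2}$ (so that $T\theta^{\pm}$ is genuinely $O(T)$, using assumption \eqref{assume-F}), $\beta>0$ is a small relaxation rate coming from the uniform stability constant $\gamma_0$ in (A2), and $\eta>0,\ q>0$ are fixed constants. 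Plugging $u^{\pm}_T$ into $\partial_t u - \partial_{xx} u - f(t/T,u)$, the zeroth-order terms cancel by \eqref{tw-homo}; the term $f(t/T,\phi_0)-\bar f(\phi_0)$, which is the obstruction to $\phi_0(x-c_0t)$ being an exact solution, is exactly $\partial_t F(t/T,\phi_0)\cdot(1/T)\cdot T = \partial_t[\,T\,F(t/T,\phi_0(\cdot))\,]$ in the right scaling; so I would choose $\theta^{\pm}(t,x)=-F(t/T,\phi_0(x-c_0t-\cdots))$ (or a closely related antiderivative expression) so that the leading $O(1)$ discrepancy is absorbed, leaving a remainder that is $O(T)$ uniformly in $t,x$ thanks to the boundedness of $F,\partial_uF,\partial_{uu}F$ guaranteed by \eqref{assume-F}. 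The residual $O(T)$ terms are then dominated by choosing $\sigma_T'(t)$ of size $O(T)$ with the correct sign (using that $\phi_0'<0$ and that near $u=0,1$ one gains from $\partial_u \bar f\le -\gamma_0$, exactly as in the classical Fife--McLeod construction).

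The key steps, in order, are: (1) record the quantitative stability and steepness estimates for $\phi_0$ — namely $\phi_0'<0$, exponential decay of $\phi_0,\ 1-\phi_0$ and $\phi_0'$ at $\pm\infty$, and the linear stability margin $\gamma_0$ — and fix $\eta,\beta,q$ accordingly (this is standard for the homogeneous bistable/multistable equation under (A2), (A4)); (2) verify the super/sub-solution inequalities for $u^{\pm}_T$ for all small $T>0$, where the only $T$-dependence enters through the bounded perturbation $T\theta^{\pm}$ and through $\sigma_T'=O(T)$, so that the threshold $T$ below which the inequalities hold is uniform; (3) check that at some initial time (say $t=0$) the ordering
\begin{equation*}
	u^-_T(0,x)\le U_T(0,x-c_T(0))\le u^+_T(0,x)\quad\text{for all }x\in\R
\end{equation*}
can be arranged after a suitable spatial shift of $U_T$ — here one uses that both $U_T(0,\cdot)$ and $\phi_0$ are monotone and connect $0$ and $1$, plus the $\pm\delta'$ room in \eqref{initial-s}; (4) invoke Theorem \ref{th-known} (ii): the solution starting from $U_T(0,\cdot-c_T(0))$ converges as $t\to+\infty$ to a shift of the traveling wave, while being trapped between $u^-_T$ and $u^+_T$; reading off the positions of the $1/2$-level sets of $u^{\pm}_T$ as $t\to+\infty$ gives
\begin{equation*}
	\big|\,c_T(t)-c_0 t\,\big|\le \sigma_T(t)+\text{const}=O(T)\,t+O(1)\quad\text{as }t\to+\infty
\end{equation*}
(up to a bounded shift), and dividing by $t$ and letting $t\to+\infty$ yields $|\bar c_T-c_0|\le M_1 T$ by the definition \eqref{mean-speed} of the average speed; (5) finally, note $M_1$ depends only on $\phi_0$, $\gamma_0$, and the uniform bounds on $F,\partial_uF,\partial_{uu}F$, hence not on $T$.

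The main obstacle, I expect, is step (2) — making the super/sub-solution computation genuinely uniform in small $T$. The difficulty is that the perturbation $\theta^{\pm}(t,x)=-F(t/T,\cdot)$ depends on $T$ through the fast time $t/T$, so its \emph{time} derivative $\partial_t\theta^{\pm}$ is $O(1/T)$, not $O(1)$; the construction only works because this large term is exactly the one cancelling $f(t/T,\phi_0)-\bar f(\phi_0)$, so one must organize the algebra so that the $O(1/T)$ contributions cancel identically and what remains is controlled by $\|\partial_uF\|_\infty$, $\|\partial_{uu}F\|_\infty$ and $\|\phi_0'\|_\infty$ times $T$. A secondary subtlety is that $\theta^{\pm}$ must be evaluated at the shifted argument $x-c_0t-\sigma_T(t)-\cdots$ rather than at $x-c_0t$, which introduces cross terms between $\sigma_T'$ and $\partial_uF$; these are $O(T^2)$ and harmless, but they have to be bookkept. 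Away from the transition layer (i.e. where $\phi_0$ is close to $0$ or $1$) one closes the inequalities using the stability margin $\gamma_0$ together with the exponential smallness of $\phi_0'$; inside the layer, where $|\phi_0'|$ is bounded below, one closes them using the $\sigma_T'$ term — this is the usual two-regime split and the constants there are exactly the classical ones, now perturbed by $O(T)$.
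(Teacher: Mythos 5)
Your overall strategy is exactly the paper's: perturb the homogenized wave $\phi_0(x-c_0t)$ by $TF(t/T,\cdot)$ with $F$ from \eqref{define-F} so that the large $O(1/T)$ time derivative of this term cancels the discrepancy $f(t/T,\phi_0)-\bar f(\phi_0)$, build super/sub-solutions by adding Fife--McLeod-type corrections, sandwich the Cauchy solution, and invoke the global stability in Theorem \ref{th-known}~(ii) to read off the position of the $1/2$-level set and hence $\bar c_T$. This is precisely what the paper does (Lemmas \ref{sup-rapid}--\ref{sub-rapid} and the proof that follows), so the main idea is right. (Your sign for $\theta^{\pm}$ should be $+F$ rather than $-F$ if you want the $O(1/T)$ contributions of $\partial_t[T\theta^{\pm}]$ to cancel $f(t/T,\phi_0)-\bar f(\phi_0)$ given the orientation of \eqref{tw-homo}, but you hedge on this, and it is a harmless bookkeeping issue.)

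There is, however, one concrete flaw in your ansatz. You take the additive correction to be $\pm\eta\,\me^{-\beta t}$, which decays to \emph{zero}. In the outer region where $\phi_0$ is close to $0$ or $1$, the residual after absorbing the fast oscillation is a \emph{persistent} $O(T)$ term (bounded below by a positive constant times $T$, coming from $\partial_u F$, $\partial_{uu}F$ and the Lipschitz constant of $f$); there, $\phi_0'$ is exponentially small, so the position correction $\sigma_T'(t)$ contributes essentially nothing, and the only stabilizing mechanism is the margin $\partial_u\bar f\le-\gamma_0$ acting on the additive term. Concretely the outer-region inequality reads, schematically, $q'(t)+\gamma_1 q(t)-K_0T\ge 0$, which \emph{fails} once $\eta\me^{-\beta t}$ drops below $K_0T/\gamma_1$. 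The fix — which is exactly what the paper does — is to let the additive correction relax not to $0$ but to a strictly positive $O(T)$ floor: $q_T(t)=\tfrac{K_0T}{\gamma_1}+\bigl(\epsilon_1-\tfrac{K_0T}{\gamma_1}\bigr)\me^{-\gamma_1 t}$, with the companion position shift $\eta_T(t)$ solving a linear first-order ODE driven by $q_T$, so that $\eta_T'(t)\to -\tfrac{(\gamma_1+K_1)K_0}{\beta\gamma_1}T$ as $t\to\infty$; it is this asymptotic slope that produces the $M_1T$ in \eqref{es-con-rapid} after dividing by $t$. With that correction your steps (3)--(5) go through exactly as in the paper.
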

 
In Section 3.2, we generalize our arguments to prove the convergence of $\bar{c}_T$ to $c_0$ without the assumption  \eqref{assume-F}. Both parts rely on the construction of super- and sub-solution for the following Cauchy problem 
\begin{equation}\label{cauchy-rapid}
\left\{\baa{ll}
	\displaystyle \partial_t v_T= \partial_{xx}v_T+ f\left(t/T,v_T\right) & \hbox{for }\,  t>0,\, x\in\R, \vspace{5pt}\\
v_T(0,x)=\phi_0(x) & \hbox{for }\, x\in\R,
\eaa \right.
\end{equation}
where $\phi_0(\cdot)$ is the wave profile of the homogeneous wave \eqref{tw-homo}. Throughout this section, we denote by $v_T(t,x)$ the solution of problem \eqref{cauchy-rapid}, and assume that conditions (A1)-(A4) hold. Since $0<\phi_0<1$ in $\R$, it follows directly from the strong maximum principle of parabolic equations that $0<v_T(t,x)<1$ for all $t\geq 0$, $x\in\R$.

\subsection{Proof of Theorem \ref{restate-th}}

In this part, we assume that \eqref{assume-F} holds, i.e., $F(t,u)$, $\partial_u F(t,u)$ and $\partial_{uu} F(t,u)$ are almost periodic in $t$ uniformly with respect to $u$ in bounded sets. Before presenting the super- and sub-solution, 
let us first introduce some notations, which will be also useful in the proof of Theorem \ref{th-slow}. First of all, by condition (A2) and the boundedness of $\partial_{uu}f(t,u)$ in $(t,u)\in \R\times [-1,2]$, there exist two constants $\delta_1\in (0,\delta_0)$ and
$\gamma_1\in (0,\gamma_0)$ such that
\begin{equation}\label{choose-gamma1}
	\partial_uf(t,u) \leq -\gamma_1 \, \hbox{ for all } \, t\in\R,\,  u\in [-\delta_1,\delta_1]\cup [1-\delta_1,1+\delta_1].
\end{equation}
Since $\partial_uf(t,u)$ is bounded for $(t,u)\in \R\times [-1,2]$, there exists some constant $K_1>0$ such that
\begin{equation}\label{lip-cons}
	|f(t,u_1)-f(t,u_2)|\leq K_1|u_1-u_2|\, \hbox{ for all }\,  t\in\R,\,  u_1,u_2\in [-1,2].
\end{equation}

We first construct the super-solution of \eqref{cauchy-rapid} in the following lemma. 

\begin{lem}\label{sup-rapid}
For any $\epsilon_1\in (0,\delta_1/2)$, there exists $T_1=T_1(\epsilon_1)>0$ sufficiently small such that for any $T\in (0,T_1)$, then function $v^+_T:[0,+\infty)\times\R$ defined by
\begin{equation*}
v^+_T(t,x)=\phi_0\left(x-c_0t+\eta_T(t)\right)+q_T(t)+TF\left(\frac{t}{T}, \phi_0\left(x-c_0t+\eta_T(t)\right)+q_T(t)  \right)
\end{equation*}	
is a super-solution of \eqref{cauchy-rapid} for $t\ge 0$ and $x\in\R$, where $q_T(\cdot)$ and $\eta_T(\cdot)$ are $C^1([0,+\infty))$ functions satisfying
\begin{equation}\label{require-qt}
q_T(0)=\epsilon_1,\quad 0<q_T(t)\leq q_T(0),\quad -A_1\leq q'_T(t)<0  \quad\hbox{ for all }\,\, t\geq 0,	
\end{equation}
and
\begin{equation}\label{require-etat}
	\eta_T(0)=0,\quad  -A_2 \leq \eta_T'(t)<0 \quad\hbox{ for all }\,\, t\geq 0,	
\end{equation}
for some constants $A_1>0$ and $A_2>0$ independent of $T\in (0,T_1)$.
\end{lem}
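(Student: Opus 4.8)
The plan is to verify that $v^+_T$ is a super-solution in the classical sense: that the parabolic operator $\mathcal{N}[v]:=\partial_t v-\partial_{xx}v-f(t/T,v)$ satisfies $\mathcal{N}[v^+_T]\ge 0$ on $(0,\infty)\times\R$, together with the initial comparison $v^+_T(0,\cdot)\ge\phi_0$. The latter is immediate since $\eta_T(0)=0$, $q_T(0)=\epsilon_1>0$ and $F(0,\cdot)\equiv 0$, so $v^+_T(0,x)=\phi_0(x)+\epsilon_1$. Write $\xi=x-c_0t+\eta_T(t)$ and $w=\phi_0(\xi)+q_T(t)$, so that $v^+_T=w+TF(t/T,w)$. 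Two identities drive the computation: the profile equation $\phi_0''(\xi)=-c_0\phi_0'(\xi)-\bar f(\phi_0(\xi))$ from \eqref{tw-homo}, and $\partial_tF(t,u)=f(t,u)-\bar f(u)$ from \eqref{define-F}. The point of the correction $TF(t/T,w)$ is precisely to absorb the $O(1)$ homogenization discrepancy $f(t/T,\cdot)-\bar f(\cdot)$: when $TF(t/T,w)$ is differentiated in $t$, the chain rule kills the prefactor $T$ against the $1/T$ from $\partial_tF(t/T,w)$, producing exactly $f(t/T,w)-\bar f(w)$, which matches, up to $O(T)$ (since $v^+_T-w=TF=O(T)$), the leftover discrepancy $\bar f(\phi_0(\xi))-f(t/T,v^+_T)$ that appears once the profile equation is substituted.

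Carrying out the differentiations and substituting $\phi_0''$, one arrives, after regrouping, at
\[
\mathcal{N}[v^+_T]=\bigl(1+T\,\partial_uF(t/T,w)\bigr)\bigl[\phi_0'(\xi)\,\eta_T'(t)+q_T'(t)\bigr]-\bar f'(\theta_1)\,q_T(t)+R_T(t,x),
\]
where $\theta_1$ is an intermediate value between $\phi_0(\xi)$ and $w$ (mean value theorem applied to $\bar f$), and $R_T$ collects $T\,\partial_uF(t/T,w)\,\bar f(\phi_0(\xi))$, $-\,\partial_uf(t/T,\theta_2)\,TF(t/T,w)$ and $-\,T\,\partial_{uu}F(t/T,w)\,(\phi_0'(\xi))^2$. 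By \eqref{assume-F} together with the boundedness of $\phi_0'$, $\bar f$ and $\partial_uf$ on the relevant compact sets, there is a constant $C$ independent of $T$ and of $(t,x)$ with $|R_T(t,x)|\le CT$; this is the only place the extra hypothesis \eqref{assume-F} is used, and it is exactly what makes the perturbation of order $T$.

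The verification of $\mathcal{N}[v^+_T]\ge 0$ splits according to the value of $\xi$. On the \emph{interior} set $\{\xi:\phi_0(\xi)\in[\delta_1/2,\,1-\delta_1/2]\}$, which is a compact interval because $\phi_0$ is a decreasing bijection of $\R$ onto $(0,1)$, one has $\phi_0'(\xi)\le-\kappa_0<0$; taking $\eta_T'\equiv-A_2$ with $A_2$ a suitably large multiple of $\epsilon_1$ makes the sign-definite term $\phi_0'(\xi)\eta_T'(t)\ge\kappa_0A_2$ dominate $|q_T'|\le A_1$, the bounded contribution $|\bar f'(\theta_1)q_T|\le C_2\epsilon_1$, and $|R_T|\le CT$, so $\mathcal{N}[v^+_T]\ge 0$ once $T$ is small. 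On the \emph{exterior} set, where $\phi_0(\xi)<\delta_1/2$ or $\phi_0(\xi)>1-\delta_1/2$, the bound $q_T(t)\le\epsilon_1<\delta_1/2$ forces $w$, hence $\theta_1$, into $[-\delta_1,\delta_1]\cup[1-\delta_1,1+\delta_1]$, so \eqref{choose-gamma1} and averaging in $t$ give $\bar f'(\theta_1)\le-\gamma_1$; discarding the favorable term $\phi_0'(\xi)\eta_T'(t)\ge 0$, we get $\mathcal{N}[v^+_T]\ge q_T'(t)+\gamma_1q_T(t)-C'T$ for a constant $C'$ independent of $T$. This dictates the choice of $q_T$: let $q_T$ solve the linear ODE $q_T'=-\tfrac{\gamma_1}{2}q_T+C'T$ with $q_T(0)=\epsilon_1$. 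For $T<\gamma_1\epsilon_1/(2C')$ its equilibrium $2C'T/\gamma_1$ lies below $\epsilon_1$, so $q_T$ decreases monotonically from $\epsilon_1$ toward it; this yields $0<q_T(t)\le\epsilon_1$, $-A_1\le q_T'(t)<0$ with $A_1:=\gamma_1\epsilon_1/2$, and $q_T'+\gamma_1q_T-C'T=\tfrac{\gamma_1}{2}q_T\ge 0$, as needed. Setting $\eta_T(t)=-A_2t$, both \eqref{require-qt} and \eqref{require-etat} hold, and $A_1$, $A_2$, and the threshold $T_1=T_1(\epsilon_1)$ (chosen so that $1+T\partial_uF\ge\tfrac12$, the equilibrium of the $q_T$-equation stays below $\epsilon_1$, and the $O(T)$ errors are dominated in both regimes) depend only on $\epsilon_1$ and the fixed data, not on $T\in(0,T_1)$.

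I expect the \textbf{main obstacle} to be the bookkeeping rather than any isolated estimate: one must cleanly separate the contributions to $\mathcal{N}[v^+_T]$ into the leading sign-definite term (of size $\kappa_0A_2$ in the interior regime and $\gamma_1q_T$ in the exterior regime), the $O(\epsilon_1)$ terms, and the $O(T)$ remainder $R_T$ — the last step relying essentially on \eqref{assume-F}, without which $F(t,u)$ may be unbounded in $t$ and the correction $TF$ fails to be small — and then check that the free parameters $A_1$, $A_2$, the coefficient $C'$ in the $q_T$-equation, and $T_1$ can all be fixed consistently so that the two regimes close simultaneously.
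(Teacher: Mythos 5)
Your proposal is correct and follows essentially the same strategy as the paper's proof: compute $\mathcal{N}[v_T^+]$ using the profile equation and the identity $\partial_t F(t,u)=f(t,u)-\bar f(u)$, bound all the $TF$-derived correction terms by $O(T)$ using \eqref{assume-F}, then split according to whether $\phi_0(\xi)$ is near the steady states (where the uniform bistability bound \eqref{choose-gamma1} gives the dissipative term $\gamma_1 q_T$) or in the compact interior region (where $\phi_0'\le-\kappa_0<0$ makes the drift term $\phi_0'\eta_T'$ sign-definite and dominant). The only differences are cosmetic: you take $\eta_T(t)=-A_2t$ linear and let $q_T$ solve an ODE with slack ($q_T'+\gamma_1 q_T - C'T=\gamma_1 q_T/2$), whereas the paper pins down $q_T$ and $\eta_T$ by solving the corresponding ODEs with equality (formulas \eqref{function-qt} and \eqref{function-eta}); and you keep the $(1+T\partial_u F)$ prefactor explicit in the regrouping rather than absorbing it into the $O(T)$ remainder. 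Both yield the required bounds \eqref{require-qt}--\eqref{require-etat} with $T$-independent $A_1,A_2$.
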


\begin{proof}
By the definition of traveling waves of \eqref{mean-eq}, we have $0<\phi_0<1$ in $\R$, and for any $\epsilon_1\in (0,\delta_1/2)$, there exists $C_1>0$ such that
\begin{equation*}
	\left\{\baa{ll}
	\displaystyle 0< \phi_0(\xi)\leq \epsilon_1 & \hbox{ if } \,\xi\geq C_1,\vspace{5pt}\\
	\displaystyle 1-\epsilon_1 \leq \phi_0(\xi)<1 & \hbox{ if } \,\xi\leq -C_1.
	\eaa \right.
\end{equation*}
Since $\phi_0(\xi)$ is decreasing in $\xi\in\R$, there exists $\beta>0$ such that
\begin{equation}\label{de-phi}
	\phi_0(\xi) \leq -\beta \, \hbox{ for all }\,  -C_1\leq \xi\leq  C_1.
\end{equation}
To prove $v^+_T(t,x)$ is a super-solution, it suffices to find suitable $C^1([0,+\infty))$ functions $q_T$ and $\eta_T$  satisfying \eqref{require-qt} and \eqref{require-etat} respectively, such that if $T>0$ is sufficiently small, there holds
\begin{equation*}
	N(t,x):= \partial_t v^+_T(t,x)-\partial_{xx} v^+_T(t,x) -f\left( \frac{t}{T}, v^+_T(t,x) \right) \geq 0
\end{equation*}
for all $(t,x)\in [0,+\infty)\times\R$. Notice that $(\phi_0(\cdot),c_0)$ satisfies \eqref{tw-homo}, and that
$$\partial_t F\left(\frac{t}{T},u\right)=\frac{1}{T} \left(f\left(\frac{t}{T},u\right)-\bar{f}(u)\right) \,\hbox{ for all }\, t\in\R,\, u\in\R.$$
It is straightforward to check that for any $T>0$ and any $C^1([0,+\infty))$ functions $q_T$ and $\eta_T$, one has
$$\baa{ll}
N(t,x)  =\!\!\! & \displaystyle
\phi_0'(\xi)\eta_T'(t)+q_T'(t)+\bar{f}(\phi_0(\xi))-\bar{f}(\phi_0(\xi)+q_T(t)) \vspace{6pt}\\
&  \displaystyle \quad  +\, f\left(\frac{t}{T},\phi_0(\xi)+q_T(t) \right) - f\left(\frac{t}{T},\phi_0(\xi)+q_T(t) +TF\left(\frac{t}{T}, \phi_0(\xi)+q_T(t) \right)   \right)  \vspace{6pt}\\
 & \displaystyle \quad  +\, T \partial_uF\left(\frac{t}{T}, \phi_0(\xi)+q_T(t)\right) \left( \phi_0'(\xi)\eta_T'(t)+q_T'(t)-c_0\phi'(\xi) - \phi_0''(\xi)  \right) \vspace{6pt}\\
 & \displaystyle \quad  - \, T \partial_{uu}F\left(\frac{t}{T}, \phi_0(\xi)+q_T(t)\right) (\phi_0'(\xi))^2\eaa$$
for all $(t,x)\in [0,\infty)\times\R$, where $\xi=x-c_0t+\eta_T(t)$. 
Due to \eqref{lip-cons}, there holds
$$\baa{ll}
& \displaystyle \!\!\! \left| f\left(\frac{t}{T},\phi_0(\xi)+q_T(t) \right) -  f\left(\frac{t}{T},\phi_0(\xi)+q_T(t) +TF\left(\frac{t}{T}, \phi_0(\xi)+q_T(t) \right)\right) \right| \vspace{6pt}\\
\leq &  \displaystyle  \!\!\!   K_1 T \left|F\left(\frac{t}{T}, \phi_0(\xi)+q_T(t) \right) \right|  \eaa $$
for all $(t,x)\in [0,\infty)\times\R$. Clearly, $K_1>0$ is a constant independent of $T$.
Furthermore, since $F(s,u)$, $\partial_u F(s,u)$ and $\partial_{uu} F(s,u)$ are almost periodic in $s\in\R$ uniformly for $u\in [-1,2]$ by \eqref{assume-F}, they are bounded for $(s,u)\in \R\times [-1,2]$ (see e.g., \cite{fink}).
With $q_T$ required to satisfy \eqref{require-qt}, one has $0<\phi_0(\xi)+q_T(t)<1+\epsilon_1<1$ for all $(t,x)\in [0,\infty)\times\R$, whence there exists some $K_2>0$ (independent of $T>0$) such that
$$ \left|F\left(\frac{t}{T}, \phi_0(\xi)+q_T(t) \right) \right|,\,\, \left|\partial_uF\left(\frac{t}{T}, \phi_0(\xi)+q_T(t) \right) \right|,\,\,  \left|\partial_{uu}F\left(\frac{t}{T}, \phi_0(\xi)+q_T(t) \right) \right| \,\leq  K_2. $$
Finally, with $q_T$ and $\eta_T$  required to satisfy \eqref{require-qt} and \eqref{require-etat}, and with the fact that $\phi_0'(\cdot)$ and $\phi_0''(\cdot)$ are bounded in $\R$, one finds some $K_3>0$ (independent of $T>0$) such that
\begin{equation*}
\left|\phi_0'(\xi)\eta_T'(t)\right|+\left|q_T'(t)\right|+\left|c_0\phi'(\xi)\right| +\left|\phi_0''(\xi)  \right|+(\phi_0'(\xi))^2 \leq K_3
\end{equation*}
for all $(t,x)\in [0,\infty)\times\R$.  Combining the above estimates, one obtains that
\begin{equation}\label{sup-rapid-ntx}
N(t,x) \geq \phi_0'(\xi)\eta_T'(t)+q_T'(t)+\bar{f}(\phi_0(\xi))-\bar{f}(\phi_0(\xi)+q_T(t)) - TK_0
\end{equation}
for all $(t,x)\in [0,\infty)\times\R$, where $K_0=K_1K_2+K_2K_3$ is a constant independent of $T>0$.

We complete the proof by considering three cases: (a) $x-c_0t+\eta_T(t)\geq C_1$; (b) $x-c_0t+\eta_T(t)\leq -C_1$; (c) $-C_1<x-c_0t+\eta_T(t)<C_1$. In case (a), since $q_T$ is required to satisfy $0<q_T(t)\leq \epsilon_1$, we have $0<\phi_0(\xi) \leq \epsilon_1 <\delta_1/2$ and $0<\phi_0(\xi)+q_T(t) \leq 2\epsilon_1<\delta_1$, and hence, by \eqref{choose-gamma1}, there holds
$$\bar{f}(\phi_0(\xi))-\bar{f}(\phi_0(\xi)+q_T(t))   \geq \gamma_1q_T(t).$$
Since  $\eta_T(t)$ is required to be decreasing in $t$ and since $\phi_0(\xi)$ is decreasing in $\xi$, it then follows from \eqref{sup-rapid-ntx} that
$$N(t,x) \geq q_T'(t)+\bar{f}(\phi_0(\xi))-\bar{f}(\phi_0(\xi)+q_T(t)) - TK_0 \geq   q_T'(t)+\gamma_1q_T(t)-K_0T.$$
Let us now choose
\begin{equation}\label{choose-T1}
	T_1=\frac{\epsilon_1 \gamma_1}{K_0}>0,
\end{equation}
and the function $q_T(t)$ such that
\begin{equation*}
	q_T'(t)+\gamma_1q_T(t)-TK_0=0 \,\,\hbox{ for }\,\, t> 0,\quad\hbox{and}\quad  q_T(0)=\epsilon_1,
\end{equation*}
namely,
\begin{equation}\label{function-qt}
q_T(t)=\frac{K_0T}{\gamma_1}+\left(\epsilon_1-\frac{K_0T}{\gamma_1}\right) \me^{-\gamma_1t}\, \hbox{ for } \, t\geq 0.
\end{equation}
Clearly, for any $0<T< T_1$, the function $q_T(t)$ satisfies \eqref{require-qt} with $A_1=\epsilon_1\gamma_1$, and $N(t,x)\geq 0$ for all $(t,x)\in [0,\infty)\times\R$ such that  $x-c_0t+\eta_T(t)\geq C_1$, as soon as $\eta_T$ satisfies \eqref{require-etat}.

In case (b), with $q_T$ given by \eqref{function-qt}, one has $1-\delta_1< 1-\epsilon_1\leq \phi_0(\xi) < 1$ and $ 1-\delta_1 <\phi_0(\xi)+q_T(t) <1+\epsilon_1<1+\delta_1$.  Then, proceeding similarly as above, one can conclude that for any $0<T< T_1$, $N(t,x)\geq 0$ for all $(t,x)\in [0,\infty)\times\R$ such that  $x-c_0t+\eta_T(t)\leq -C_1$, provided that $\eta_T$ satisfies \eqref{require-etat}.

It remains to find a suitable function $\eta_T$ such that $N(t,x)\geq 0$ in case (c), i.e., $-C_1<x-c_0t+\eta_T(t)<C_1$. By the definition of $\bar{f}$ (see \eqref{mean-f}), one has
$$\left|\bar{f}(\phi_0(\xi))-\bar{f}(\phi_0(\xi)+q_T(t))\right|\leq K_1q_T(t), $$
where $K_1$ is the constant provided by \eqref{lip-cons}. Then, with $\eta_T(t)$ required to satisfy \eqref{require-etat}, it follows from \eqref{de-phi} and \eqref{sup-rapid-ntx} that
$$N(t,x) \geq -\beta\eta_T'(t)+q_T'(t)-K_1q_T(t) - TK_0.$$
Then, with $q_T(t)$ given by \eqref{function-qt}, by choosing $\eta_T(t)$ such that
$$-\beta\eta_T'(t)+q_T'(t)-K_1q_T(t) - TK_0=0,\quad\hbox{and}\quad \eta_T(0)=0,$$	
one has $N(t,x)\geq 0$ for all  $(t,x)\in [0,\infty)\times\R$ such that $-C_1<x-c_0t+\eta_T(t)<C_1$. It is easy to compute that
\begin{equation}\label{function-eta}
	\eta_T(t)=-\frac{\gamma_1+K_1}{\beta\gamma_1}\left(\epsilon_1-\frac{K_0T}{\gamma_1}\right)\left(1-\me^{-\gamma_1t}\right)-\frac{(\gamma_1+K_1)K_0T}{\beta\gamma_1}t \, \hbox{ for } \, t\geq 0.
\end{equation}
Furthermore, it is straightforward to check that
if $0<T<T_1$, then there holds
$$ -\frac{(\gamma_1+K_1)\epsilon_1}{\beta} \leq \eta'_T(t) \leq -\frac{(\gamma_1+K_1)K_0T}{\beta\gamma_1} <0 \, \hbox{ for all }\,  t\geq 0.$$
Therefore, the function $\eta_T(t)$ given by \eqref{function-eta} satisfies \eqref{require-etat} with $A_2=(\gamma_1+K_1)\epsilon_1/\beta$.

Combining the above, one can conclude that for any $0<T<T_1$, there holds $N(t,x)\geq 0 $ for all $(t,x)\in [0,+\infty)\times\R$. This completes the proof of Lemma \ref{sup-rapid}.
\end{proof}

The following lemma gives the sub-solution of \eqref{cauchy-rapid}.

\begin{lem}\label{sub-rapid}
For any $\epsilon_1\in (0,\delta_1/2)$, let $T_1=T_1(\epsilon_1)>0$ be given by~\eqref{choose-T1} as in Lemma~$\ref{sup-rapid}$. Then, for every $0<T<T_1$, the function $v^-_{T}:[0,+\infty)\times\R\to\R$ defined by
	$$v^-_{T}(t,x)=\phi_0\left(x-c_0t-\eta_T(t)\right)-q_T(t)+TF\left(\frac{t}{T}, \phi_0\left(x-c_0t-\eta_T(t)\right)-q_T(t) \right)$$
	is a sub-solution of~\eqref{cauchy-rapid} for $t\ge0$ and $x\in\R$, where $q_{T}$ and $\eta_{T}$ are $C^1([0,+\infty))$ functions given by~\eqref{function-qt} and~\eqref{function-eta}, respectively.
\end{lem}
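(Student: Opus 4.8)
The plan is to mirror the proof of Lemma~\ref{sup-rapid}, replacing $q_T$ by $-q_T$ and $\eta_T$ by $-\eta_T$, and to check that the very same functions $q_T$ and $\eta_T$ constructed there (given by \eqref{function-qt} and \eqref{function-eta}) make $v^-_T$ a sub-solution. Concretely, writing $\xi=x-c_0t-\eta_T(t)$ and
$$M(t,x):=\partial_t v^-_T(t,x)-\partial_{xx}v^-_T(t,x)-f\left(\frac{t}{T},v^-_T(t,x)\right),$$
it suffices to show $M(t,x)\le 0$ for all $t\ge 0$, $x\in\R$; since $F(0,\cdot)\equiv 0$, $\eta_T(0)=0$ and $q_T(0)=\epsilon_1$, one also has $v^-_T(0,x)=\phi_0(x)-\epsilon_1\le\phi_0(x)$, so $v^-_T$ is a sub-solution of the Cauchy problem \eqref{cauchy-rapid}.

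First I would compute $M(t,x)$ exactly as in Lemma~\ref{sup-rapid}, using the traveling-wave identity $\phi_0''+c_0\phi_0'+\bar f(\phi_0)=0$ and $\partial_t F(t/T,u)=\frac1T(f(t/T,u)-\bar f(u))$. The only changes relative to the super-solution computation are signs: the term $\phi_0'(\xi)\eta_T'(t)$ now enters as $-\phi_0'(\xi)\eta_T'(t)$ (because $\eta_T$ appears in $\xi$ with a minus sign), while $q_T'(t)$ and $\bar f(\phi_0(\xi))-\bar f(\phi_0(\xi)+q_T(t))$ become $-q_T'(t)$ and $\bar f(\phi_0(\xi))-\bar f(\phi_0(\xi)-q_T(t))$. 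Bounding the $O(T)$ remainder terms exactly as before --- via \eqref{lip-cons}, the boundedness of $F,\partial_uF,\partial_{uu}F$ on $\R\times[-1,2]$, the boundedness of $\phi_0'$, $\phi_0''$, and the fact that $w:=\phi_0(\xi)-q_T(t)\in(-\epsilon_1,1)\subset(-1,2)$ (and $w+TF(t/T,w)\in[-1,2]$ for $T$ small) --- I would arrive at
$$M(t,x)\ \le\ \bar f(\phi_0(\xi))-\bar f\big(\phi_0(\xi)-q_T(t)\big)-\phi_0'(\xi)\eta_T'(t)-q_T'(t)+TK_0$$
with the same constant $K_0$ as in \eqref{sup-rapid-ntx}, for all $0<T<T_1$.

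Then I would split into the same three regions. If $\xi\ge C_1$, then $0<\phi_0(\xi)\le\epsilon_1$ and $w\in(-\epsilon_1,\epsilon_1)\subset(-\delta_1,\delta_1)$, so \eqref{choose-gamma1} (applied to $\bar f$) gives $\bar f(\phi_0(\xi))-\bar f(w)\le-\gamma_1q_T(t)$; discarding the nonpositive term $-\phi_0'(\xi)\eta_T'(t)$ (here $\phi_0'<0$ and $\eta_T'<0$) and using the ODE $q_T'+\gamma_1q_T-TK_0=0$ yields $M(t,x)\le-(q_T'(t)+\gamma_1q_T(t)-TK_0)=0$. If $\xi\le-C_1$, then $\phi_0(\xi)\in[1-\epsilon_1,1)$ and $w\in(1-2\epsilon_1,1)\subset(1-\delta_1,1+\delta_1)$ (using $\epsilon_1<\delta_1/2$), and the same argument near $u=1$ gives $M(t,x)\le0$. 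Finally, if $-C_1<\xi<C_1$, then $|\bar f(\phi_0(\xi))-\bar f(w)|\le K_1q_T(t)$ by \eqref{lip-cons} and $\phi_0'(\xi)\le-\beta$ by \eqref{de-phi}, whence $-\phi_0'(\xi)\eta_T'(t)\le\beta\eta_T'(t)$; substituting the ODE defining $\eta_T$, i.e. $-\beta\eta_T'(t)+q_T'(t)-K_1q_T(t)-TK_0=0$, gives $M(t,x)\le K_1q_T(t)+\beta\eta_T'(t)-q_T'(t)+TK_0=0$. This covers all regions.

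There is no genuine difficulty beyond that of Lemma~\ref{sup-rapid}; the only points needing care are the bookkeeping of signs --- checking that $-\phi_0'(\xi)\eta_T'(t)$ has the favorable sign ($\le 0$ in the two tail regions and $\le\beta\eta_T'(t)$ in the middle region, all because $\phi_0'<0$, $\eta_T'<0$ and $\phi_0'\le-\beta$ on $[-C_1,C_1]$) --- and verifying that the shifted argument $w=\phi_0(\xi)-q_T(t)$, which may now dip below $0$ near the state $u=0$, still lies in the interval $[-\delta_1,\delta_1]$ where \eqref{choose-gamma1} applies; this is exactly where the restriction $\epsilon_1<\delta_1/2$ is used, just as in the super-solution case.
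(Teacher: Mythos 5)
Your proof is correct and supplies exactly the computation that the paper leaves implicit by declaring the proof of Lemma~\ref{sub-rapid} analogous to that of Lemma~\ref{sup-rapid}. The sign bookkeeping is the only nontrivial point and you get it right: $-\phi_0'(\xi)\eta_T'(t)\le 0$ can be dropped on the tails and is bounded by $\beta\eta_T'(t)$ on the region $-C_1\le\xi\le C_1$, the shifted state $\phi_0(\xi)-q_T(t)$ remains in $(-\delta_1,\delta_1)$ or $(1-\delta_1,1+\delta_1)$ because $\epsilon_1<\delta_1/2$, and the same ODEs for $q_T$ and $\eta_T$ annihilate the residual, giving $M\le 0$ in all three regions.
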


\begin{proof}
	The proof is analogous to that of Lemma~\ref{sup-rapid}; therefore, we omit the details.
\end{proof}

We are now ready to give the

\begin{proof}[Proof of Theorem $\ref{restate-th}$]
For any $\epsilon_1\in (0,\delta_1/2)$, let $T_1=T_1(\epsilon_1)$ be given by~\eqref{choose-T1}.
Since the function $F(s,u)$ is bounded for $s\in\R$ and $u$ in bounded sets, there exists $T_2=T_2(\epsilon_1)\in (0,T_1]$ such that
\begin{equation}\label{find-T2}
|TF(s,u)|\leq \frac{\epsilon_1}{4}\, \hbox{ for all } \, s\in\R, \, u\in [-\epsilon_1,1+\epsilon_1],\, 0<T<T_2.
\end{equation}
Let $T\in (0,T_2)$ be arbitrary, and let $v_T^+(t,x)$ and $v_T^-(t,x)$ be the super-solution and sub-solution of \eqref{cauchy-rapid} given in Lemmas \ref{sup-rapid}-\ref{sub-rapid}, respectively. Since $q_T(0)=\epsilon_1$, $\eta_T(0)=0$, it is easily seen from \eqref{find-T2} that
$$ v_T^-(0,x)\leq  \phi_0(x)\leq v_T^+(0,x)\, \hbox{ for all } \, x\in\R. $$
Then, by the comparison principle for parabolic equations, we have
\begin{equation}\label{sup-sub-vt}
v_T^-(t,x)\leq  v_T(t,x)\leq v_T^+(t,x) \,\hbox{ for all } \, t\ge 0,\, x\in\R,
\end{equation}
where $v_T(t,x)$ is the solution of the Cauchy problem \eqref{cauchy-rapid}.

On the other hand, recall that $u_T(t,x)=U_T(t,x-c_T(t))$ is an almost periodic traveling wave solution of \eqref{eq-main}. By definition, $\lim_{x\to-\infty}U_T(0,x)=1$ and $\lim_{x\to+\infty}U_T(0,x)=0$, and thanks to Theorem \ref{th-known} (i), $U_T(0,x)$ is decreasing in $x\in\R$. Since $\lim_{x\to-\infty}\phi_0(x)=1$,  $\lim_{x\to+\infty}\phi_0(x)=0$, and $\phi_0(x)$ is decreasing in $x\in\R$, one finds some constant $h_T>0$ such that
$$U_T(0,x+h_T-c_T(0))-\delta' \leq v_T(0,x) \leq U_T(0,x-h_T-c_T(0))+\delta'\,\hbox{ for all }\, x\in\R,$$
where $\delta'\in (0,\delta_0/2)$ is the constant provided by Theorem \ref{th-known} (ii). Then, by the stability of almost periodic traveling wave solution of \eqref{eq-main}, there holds
\begin{equation}\label{stable-vt}
	\sup_{x\in\R} \left|v_T(t,x)-U_T(t,x-c_T(t)-\xi^*_T)\right|\to 0 \, \hbox{ as }\, t\to +\infty,
\end{equation}
for some constant $\xi^*_T\in\R$.

Combining \eqref{sup-sub-vt} and \eqref{stable-vt}, one can find some $t_0=t_0(T)>0$ sufficiently large such that
$$v_T^-(t,x)-\frac{\epsilon_1}{4} \leq v_T(t,x)-\frac{\epsilon_1}{4} \leq U_T(t,x-c_T(t)-\xi^*_T) \leq  v_T(t,x)+\frac{\epsilon_1}{4}\leq v_T^+(t,x)+\frac{\epsilon_1}{4}   $$
for all $t\geq t_0$ and $x\in\R$. Choosing $x:=x(t)=c_T(t)+\xi_T^*$, with the normalization condition \eqref{normal}, one obtains that
$$v_T^-(t,c_T(t)+\xi_T^*)-\frac{\epsilon_1}{4}\leq \frac{1}{2}\leq v_T^+(t,c_T(t)+\xi_T^*)+\frac{\epsilon_1}{4} \, \hbox{ for all }\, t\geq t_0. $$
Recall that $0<q_T(t)\leq q_T(0)=\epsilon_1$ for $t\geq 0$ by \eqref{require-qt}. It then follows from the definitions of $v_T^{\pm}$ and \eqref{find-T2} that
\begin{equation}\label{esti-phi0-vt}
\phi_0\left(c_T(t)+\xi_T^*-c_0t-\eta_T(t)\right)-\frac{3}{2}\epsilon_1 \leq \frac{1}{2} \leq \phi_0\left(c_T(t)+\xi_T^*-c_0t+\eta_T(t)\right)+\frac{3}{2}\epsilon_1
\end{equation}
for all $t\geq t_0$. Since $0<\epsilon_1<\delta_1/2<1/4$ and since $\lim_{x\to+\infty}\phi_0(x)=0$, $\lim_{x\to-\infty}\phi_0(x)=1$, \eqref{esti-phi0-vt} in particular implies that there exists a constant $\xi_0>0$ independent of $T$ such that
$$\left|c_T(t)+\xi_T^*-c_0t\right| \leq |\eta_T(t)|+\xi_0 \, \hbox{ for all } \, t\geq t_0.  $$
Remember that the function $\eta_T(t)$ is given by \eqref{function-eta}.
Multiplying both sides of the above inequality by $1/t$,  one obtains that
$$\baa{ll}
& \displaystyle \!\!\! \left|\frac{1}{t}\int_0^t c'_T(\tau)d\tau-c_0\right| \vspace{6pt}\\
\leq &  \displaystyle  \!\!\!  \frac{|\xi_T^*|+|c_T(0)|+\xi_0}{t}+ \frac{1}{t}\frac{\gamma_1+K_1}{\beta\gamma_1}\left(\epsilon_1-\frac{K_0T}{\gamma_1}\right)\left(1-\me^{-\gamma_1t}\right)
+\frac{(\gamma_1+K_1)K_0}{\beta\gamma_1}T  \eaa $$
for all $t\geq t_0$.
Passing to the limits as $t\to+\infty$ in both sides yields that
$$\left|\bar{c}_T-c_0\right| \leq \frac{(\gamma_1+K_1)K_0}{\beta\gamma_1}T.$$
Finally, we choose $M_1=(\gamma_1+K_1)K_0/(\beta\gamma_1)$. Clearly, this constant is independent of $T$. Since  $0<T<T_2$ is arbitrary, the proof of Theorem \ref{restate-th} is thus complete.
\end{proof}

%%%%%%%%%%%%%%%%%%%%%%%%%%%%%%%%%%%%%%%%

\subsection{Completion of the proof of Theorem \ref{th-rapid}}

In the case where the assumption \eqref{assume-F} does not hold, we use the following function to approximate $F(t,u)$: 
\begin{equation}\label{de-FT}
	F_T(t,u)=\int_{-\infty}^t \me^{-T(t-\tau)}\left(f(\tau,u)-\bar{f}(u)\right)d\tau \, \hbox{ for }\, t\in\R,\, u\in\R.
\end{equation}
Since $f$ is of class $C^2(\R^2)$, it is clear that $F_T$ is of class $C^2(\R^2)$. It is also easily seen that $F_T(t,\cdot)$ satisfies the following differential equation
\begin{equation}\label{eq-FT}
\frac{\partial F_T}{\partial t} (t,\cdot)=-TF_T(t,\cdot)+f(t,\cdot)-\bar{f}(\cdot)\, \hbox{ for }\,  t\in\R.
\end{equation}
Furthermore, we have the following lemma.

\begin{lem}\label{lem-FT}
Let $E$ be a bounded subset of $\R$. The function $F_T(t,u)$ defined by \eqref{de-FT} possesses the following properties: 
\begin{itemize}
\item [{\rm (a)}] There exists a continuous function $\chi:(0,+\infty)\to (0,+\infty)$ with $\lim_{T\to 0^+}\chi(T)=0$ such that $|F_T(t,u)|\leq \chi(T)T^{-1}$ for all $t\in\R$, $u\in E$, $T\in (0,\infty)$;
\item [{\rm (b)}]  $F_T(t,u)$, $\partial_uF_T(t,u)$ and $\partial_{uu}F_T(t,u)$ are almost periodic in $t\in\R$ uniformly with respect to $u\in E$ and $T$ in any compact subset of $(0,\infty)$;
 %and $\mathcal{M}(F_T(\cdot,u))$, $\mathcal{M}(\partial_u F_T(\cdot,u))$, $\mathcal{M}(\partial_{uu} F_T(\cdot,u)) \subset \mathcal{M}(f(\cdot,u))$;
\item [{\rm (c)}] $TF_T(t,u)$,  $T\partial_u F_T(t,u)$ and $T\partial_{uu} F_T(t,u)$ approach zero as $T\to 0^+$ uniformly with respect to $t\in\R$ and $u\in E$.
\end{itemize}
\end{lem}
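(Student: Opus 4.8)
\emph{Proof proposal.} With $E$ as in the statement, set $\psi(\tau,u):=f(\tau,u)-\bar f(u)$; by \eqref{mean-f} and the standing boundedness hypotheses on $f$, the functions $\psi,\partial_u\psi,\partial_{uu}\psi$ are bounded on $\R\times E$, say by $M_E>0$, and by \eqref{define-F} the map $\tau\mapsto F(\tau,u)$ is a primitive of $\psi(\cdot,u)$, which has vanishing mean. The plan is to exploit that, after the substitution $\sigma=t-\tau$, the definition \eqref{de-FT} reads
\[
F_T(t,u)=\int_0^{\infty}\me^{-T\sigma}\psi(t-\sigma,u)\,d\sigma=\bigl(k_T*\psi(\cdot,u)\bigr)(t),\qquad k_T(\sigma):=\me^{-T\sigma}\mathbf{1}_{(0,\infty)}(\sigma),
\]
i.e.\ $F_T(\cdot,u)$ is the convolution in $t$ of $\psi(\cdot,u)$ with a fixed $L^1(\R)$-kernel of norm $\|k_T\|_{L^1}=1/T$; differentiating \eqref{de-FT} in $u$ under the integral sign (legitimate since $\partial_uf,\partial_{uu}f$ are bounded, $\bar f$ being then of class $C^2$) likewise gives $\partial_uF_T(\cdot,u)=k_T*\partial_u\psi(\cdot,u)$ and $\partial_{uu}F_T(\cdot,u)=k_T*\partial_{uu}\psi(\cdot,u)$. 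An integration by parts in $\sigma$ — using $\psi(t-\sigma,u)=-\tfrac{d}{d\sigma}F(t-\sigma,u)$, that $\me^{-T\sigma}F(t-\sigma,u)\to0$ as $\sigma\to+\infty$ (as $F(\cdot,u)$ is sublinear), and $\int_0^\infty\me^{-T\sigma}\,d\sigma=1/T$ — will produce the identity
\[
F_T(t,u)=T\int_0^{\infty}\me^{-T\sigma}\Bigl(\int_{t-\sigma}^{t}\psi(r,u)\,dr\Bigr)d\sigma .
\]

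For (a) I would combine this identity with the uniform sublinearity of primitives of mean-zero uniformly almost periodic functions: since $\psi$ is almost periodic in $t$ uniformly with respect to $u\in E$ and has zero mean, for every $\epsilon>0$ there is $L(\epsilon)>0$ such that $\bigl|\int_a^{a+\sigma}\psi(r,u)\,dr\bigr|\le\epsilon\sigma$ for all $a\in\R$, $\sigma\ge L(\epsilon)$ and $u\in E$ (a standard property of uniformly almost periodic functions, cf.\ \cite{fink}). Splitting the $\sigma$-integral at $L(\epsilon)$, bounding the inner integral by $M_E\sigma$ on $(0,L(\epsilon))$ and by $\epsilon\sigma$ beyond, and using $\int_0^\infty\me^{-T\sigma}\sigma\,d\sigma=T^{-2}$, this gives
\[
|F_T(t,u)|\le\tfrac{1}{2}M_E\,L(\epsilon)^2\,T+\frac{\epsilon}{T}\qquad\text{for all }t\in\R,\ u\in E,\ T>0 ,
\]
hence $T|F_T(t,u)|\le\tfrac{1}{2}M_EL(\epsilon)^2T^2+\epsilon$. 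Since $|F_T|\le M_E/T$ shows $g(T):=\sup_{t\in\R,\,u\in E}T|F_T(t,u)|$ is finite, this bound forces $\limsup_{T\to0^+}g(T)\le\epsilon$ for every $\epsilon>0$, so $g(T)\to0$ as $T\to0^+$; taking for $\chi$ any continuous majorant of $g$ with $\chi(0^+)=0$ proves (a).

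For (b) I would invoke the sequential characterisation of uniform almost periodicity (Lemma \ref{eqdf-almost}(ii)). Given $(t'_n)\subset\R$, extract $(t_n)$ along which $\psi(\cdot+t_n,u)$ converges uniformly for $t\in\R$, $u\in E$, and write $\Delta_{n,m}:=\|\psi(\cdot+t_n,\cdot)-\psi(\cdot+t_m,\cdot)\|_{L^\infty(\R\times E)}\to0$; then for any compact $[T_0,T_1]\subset(0,\infty)$,
\[
\bigl\|F_T(\cdot+t_n,\cdot)-F_T(\cdot+t_m,\cdot)\bigr\|_{L^\infty(\R\times E)}\le\|k_T\|_{L^1}\,\Delta_{n,m}\le\Delta_{n,m}/T_0\qquad(T\ge T_0),
\]
so $F_T(\cdot+t_n,u)$ converges uniformly in $(t,u,T)\in\R\times E\times[T_0,T_1]$, which by Lemma \ref{eqdf-almost}(ii) yields the almost periodicity claimed in (b); the same reasoning applies to $\partial_uF_T$ and $\partial_{uu}F_T$ with $\partial_u\psi,\partial_{uu}\psi$ in place of $\psi$, these being again almost periodic in $t$ uniformly with respect to $u\in E$ by (A1), the boundedness hypotheses and Lemma \ref{almost-partial}. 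Statement (c) is then immediate: for $F_T$ it follows from (a) since $|TF_T|\le\chi(T)\to0$, and for $T\partial_uF_T,T\partial_{uu}F_T$ one repeats the argument of (a) with $\partial_u\psi$, resp.\ $\partial_{uu}\psi$ — bounded, uniformly almost periodic in $t$, and of zero mean — replacing $\psi$, obtaining majorants $\chi_1(T),\chi_2(T)\to0$.

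I expect the main obstacle to be the upgrade from the crude bound $|F_T|\le M_E/T$ to the refined $|F_T|\le\chi(T)/T$ with $\chi(T)\to0$: it hinges entirely on the sublinearity constant $L(\epsilon)$ for the primitive $F(\cdot,u)$ being choosable independently of $u\in E$, which is precisely where the hypothesis that $f$ is almost periodic in $t$ \emph{uniformly} in $u$ enters. A closely related, essentially cosmetic, point is to check that $\partial_u\psi$ and $\partial_{uu}\psi$ inherit from $\psi$ both uniform almost periodicity (via Lemma \ref{almost-partial}, applied once more at second order) and vanishing mean, which is what makes the derivative cases of (b) and (c) go through.
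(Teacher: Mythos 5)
Your proof is correct and, in substance, reproduces the argument that the paper simply delegates to \cite[Lemmas 14.1--14.2]{fink}: the convolution representation $F_T(\cdot,u)=k_T*\psi(\cdot,u)$ with $\|k_T\|_{L^1}=1/T$, the integration by parts producing $F_T(t,u)=T\int_0^\infty \me^{-T\sigma}\bigl(\int_{t-\sigma}^t\psi(r,u)\,dr\bigr)\,d\sigma$, and the split of the $\sigma$-integral at the uniform sublinearity threshold $L(\epsilon)$ are precisely the mechanism behind that cited lemma, and parts (b) and (c) then follow from the kernel estimate and from (a) as you indicate.

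One small point deserves tightening. Treating $\partial_{uu}F_T$ in (b)--(c) requires $\partial_{uu}\psi=\partial_{uu}f-\bar f''$ to be almost periodic in $t$ uniformly for $u\in E$ and of zero mean, but Lemma \ref{almost-partial} as stated only yields uniform almost periodicity of $\partial_u f$ (its proof consumes the boundedness of $\partial_{uu}f$ to control the difference quotient of $\partial_u f$). Your phrase ``applied once more at second order'' would need boundedness of $\partial_{uuu}f$, which is not among the paper's standing hypotheses; alternatively one would have to verify directly that $\partial_{uu}f$ is uniformly continuous on $\R\times E$. This caveat is equally present in the paper's bare citation of Fink, and is immaterial in the application (where $u$ ranges over a fixed compact interval), but it should be flagged or absorbed into an explicit hypothesis.
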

\begin{proof}
This lemma follows directly from the proof of \cite[Lemmas 14.1-14.2]{fink}.
\end{proof}

Based on the above lemma, we are able to construct a pair of super-solution and sub-solution of \eqref{cauchy-rapid}, which are similar to Lemmas \ref{sup-rapid} and \ref{sub-rapid}.

\begin{lem}\label{sup-rapid-ge}
	For any $\epsilon_1\in (0,\delta_1/2)$, there exist $\tilde{T}_1=\tilde{T}_1(\epsilon_1)>0$ sufficiently small such that for any $T\in (0,\tilde{T}_1)$, the function $\tilde{v}^{\pm}_T:[0,+\infty)\times\R$ defined by
	\begin{equation*}
		\tilde{v}^+_T(t,x)=\phi_0\left(x-c_0t+\tilde{\eta}_T(t)\right)+\tilde{q}_T(t)+TF_T\left(\frac{t}{T}, \phi_0\left(x-c_0t+\tilde{\eta}_T(t)\right)+\tilde{q}_T(t)  \right),
	\end{equation*}	
     and
     	$$\tilde{v}^-_{T}(t,x)=\phi_0\left(x-c_0t-\tilde{\eta}_T(t)\right)-\tilde{q}_T(t)+TF_T\left(\frac{t}{T}, \phi_0\left(x-c_0t-\tilde{\eta}_T(t)\right)-\tilde{q}_T(t) \right),$$
	are, respectively, a super-solution and a sub-solution of \eqref{cauchy-rapid} for $t\ge 0$ and $x\in\R$, where $\tilde{q}_T(\cdot)$ and $\tilde{\eta}_T(\cdot)$ are $C^1([0,+\infty))$ functions satisfying
	\begin{equation}\label{choose-qt-1}
		\tilde{q}_T(0)=\epsilon_1,\quad 0<\tilde{q}_T(t)\leq \tilde{q}_T(0),\quad -\tilde{A}_1\leq q'_T(t)<0  \quad\hbox{ for all }\,\, t\geq 0,	
	\end{equation}
	and
	\begin{equation}\label{choose-etat-1}
		\tilde{\eta}_T(0)=0,\quad  -\tilde{A}_2 \leq \tilde{\eta}_T'(t)<0 \quad\hbox{ for all }\,\, t\geq 0,	
	\end{equation}
	for some constants $\tilde{A}_1>0$ and $\tilde{A}_2>0$ independent of $T\in (0,\tilde{T}_1)$.
\end{lem}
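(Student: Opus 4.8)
\textbf{Proof proposal for Lemma \ref{sup-rapid-ge}.}

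The plan is to repeat the structure of the proof of Lemma \ref{sup-rapid}, replacing the perturbation function $F$ by its regularized analogue $F_T$ defined in \eqref{de-FT}, and exploiting the identity \eqref{eq-FT} in place of the relation $\partial_tF(t/T,u)=T^{-1}(f(t/T,u)-\bar f(u))$. Concretely, I would write $\tilde v_T^+(t,x)=\phi_0(\xi)+\tilde q_T(t)+TF_T(t/T,\phi_0(\xi)+\tilde q_T(t))$ with $\xi=x-c_0t+\tilde\eta_T(t)$, plug it into the operator $\partial_t-\partial_{xx}-f(t/T,\cdot)$, and use $(\phi_0,c_0)$ solving \eqref{tw-homo} to cancel the leading terms. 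Because $\partial_t[TF_T(t/T,u)]=-TF_T(t/T,u)+f(t/T,u)-\bar f(u)$ by \eqref{eq-FT}, the $f(t/T,\cdot)$ contributions arrange exactly as in Lemma \ref{sup-rapid}: after using \eqref{lip-cons} to absorb the difference $f(t/T,\phi_0(\xi)+\tilde q_T)-f(t/T,\phi_0(\xi)+\tilde q_T+TF_T(\cdots))$ into a term bounded by $K_1T|F_T(t/T,\phi_0(\xi)+\tilde q_T)|$, one is left with the analogue of \eqref{sup-rapid-ntx}, namely a lower bound of the form
\[
\tilde N(t,x)\ \geq\ \phi_0'(\xi)\tilde\eta_T'(t)+\tilde q_T'(t)+\bar f(\phi_0(\xi))-\bar f(\phi_0(\xi)+\tilde q_T(t))\ -\ R_T,
\]
where $R_T$ collects all the $F_T$-dependent remainder terms (the $K_1T|F_T|$ term, the $T\partial_uF_T(\cdots)(\phi_0'\tilde\eta_T'+\tilde q_T'-c_0\phi_0'-\phi_0'')$ term, and the $T\partial_{uu}F_T(\cdots)(\phi_0')^2$ term), together with the extra $TF_T(t/T,u)$ coming from \eqref{eq-FT}.

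The crucial difference from Lemma \ref{sup-rapid} is the size of $R_T$: in the quasi-periodic case one had $|R_T|\leq K_0 T$ with $K_0$ independent of $T$, whereas here Lemma \ref{lem-FT} only guarantees that $|TF_T(t/T,u)|$, $|T\partial_uF_T(t/T,u)|$ and $|T\partial_{uu}F_T(t/T,u)|$ tend to $0$ as $T\to 0^+$, uniformly in $t\in\R$ and in $u$ over the bounded set $E=[-1,2]$. So I would set $\rho(T):=\sup\{|TF_T(s,u)|+|T\partial_uF_T(s,u)|+|T\partial_{uu}F_T(s,u)|:\,s\in\R,\,u\in[-1,2]\}$, which by Lemma \ref{lem-FT}(c) satisfies $\rho(T)\to 0^+$ as $T\to 0^+$; using that $\phi_0'$, $\phi_0''$ are bounded and (provisionally) that $\tilde q_T$, $\tilde\eta_T$ have derivatives bounded independently of $T$, one gets $|R_T|\leq K\rho(T)$ for a constant $K$ independent of $T$. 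Then I follow the three-case split exactly as before: in cases (a) $\xi\geq C_1$ and (b) $\xi\leq -C_1$ I use \eqref{choose-gamma1} to get $\bar f(\phi_0(\xi))-\bar f(\phi_0(\xi)+\tilde q_T)\geq\gamma_1\tilde q_T(t)$ (plus monotonicity of $\phi_0$ and of $\tilde\eta_T$ to discard the $\phi_0'\tilde\eta_T'$ term), so it suffices that $\tilde q_T'+\gamma_1\tilde q_T-K\rho(T)\geq 0$; I take $\tilde q_T$ solving $\tilde q_T'+\gamma_1\tilde q_T=K\rho(T)$ with $\tilde q_T(0)=\epsilon_1$, i.e. $\tilde q_T(t)=K\rho(T)/\gamma_1+(\epsilon_1-K\rho(T)/\gamma_1)\me^{-\gamma_1 t}$, which obeys \eqref{choose-qt-1} (with $\tilde A_1=\epsilon_1\gamma_1$, say) as soon as $K\rho(T)/\gamma_1<\epsilon_1$, and this fixes $\tilde T_1$ via $\rho(\tilde T_1)=\epsilon_1\gamma_1/K$. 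In case (c) $-C_1<\xi<C_1$ I use $\phi_0'(\xi)\leq-\beta$ from \eqref{de-phi} and the Lipschitz bound $|\bar f(\phi_0(\xi))-\bar f(\phi_0(\xi)+\tilde q_T)|\leq K_1\tilde q_T(t)$, and choose $\tilde\eta_T$ solving $-\beta\tilde\eta_T'+\tilde q_T'-K_1\tilde q_T-K\rho(T)=0$ with $\tilde\eta_T(0)=0$, which after integration yields an explicit $\tilde\eta_T$ whose derivative is squeezed between $-(\gamma_1+K_1)\epsilon_1/\beta$ and a strictly negative quantity, so that \eqref{choose-etat-1} holds with $\tilde A_2=(\gamma_1+K_1)\epsilon_1/\beta$. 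The sub-solution $\tilde v_T^-$ is handled by the symmetric construction with $\tilde\eta_T\mapsto-\tilde\eta_T$, $\tilde q_T\mapsto-\tilde q_T$, as in Lemma \ref{sub-rapid}.

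The main obstacle, and the one point requiring genuine care beyond transcribing the earlier proof, is the logical ordering of the choices: the remainder bound $|R_T|\leq K\rho(T)$ presupposes that $\tilde q_T'$ and $\tilde\eta_T'$ are already bounded independently of $T$, yet $\tilde q_T$ and $\tilde\eta_T$ are themselves defined through $\rho(T)$. In Lemma \ref{sup-rapid} this circularity was harmless because $\rho(T)$ was literally $K_0T$ and all constants were manifestly $T$-independent; here I must first record a-priori that any $C^1$ functions obeying the qualitative bounds $0<\tilde q_T\leq\epsilon_1$, $-\tilde A_1\leq\tilde q_T'<0$, $-\tilde A_2\leq\tilde\eta_T'<0$ yield $|R_T|\leq K\rho(T)$ with $K$ depending only on $\epsilon_1,\tilde A_1,\tilde A_2$ and the sup-norms of $\phi_0',\phi_0''$ and of $c_0$ — and then verify a posteriori that the explicitly constructed $\tilde q_T,\tilde\eta_T$ do satisfy those qualitative bounds with the chosen $\tilde A_1,\tilde A_2$, for all $T<\tilde T_1$. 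One also needs the elementary observation from Lemma \ref{lem-FT}(a) that $F_T(t/T,u)$ is bounded on $\R\times[-1,2]$ for each fixed $T>0$ (so $\tilde v_T^\pm$ take values in a fixed bounded set where \eqref{lip-cons} and the bound on $\partial_{uu}f$ apply), and the continuity in $T$ so that $\tilde T_1$ can be chosen as claimed; both are supplied by Lemma \ref{lem-FT}. Everything else is the routine computation already carried out in Lemma \ref{sup-rapid}, so I would simply say ``proceeding exactly as in the proof of Lemma \ref{sup-rapid}, with $F$ replaced by $F_T$, $K_0T$ replaced by $K\rho(T)$, and using \eqref{eq-FT} in place of $\partial_tF(t/T,u)=T^{-1}(f(t/T,u)-\bar f(u))$'' and spell out only the differences above. \fin
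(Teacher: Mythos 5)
Your proposal is correct and takes essentially the same approach as the paper: replace $F$ by $F_T$, use \eqref{eq-FT} in place of $\partial_t F(t/T,u)=T^{-1}(f(t/T,u)-\bar f(u))$, bound the $F_T$-dependent remainder by a quantity that vanishes as $T\to 0^+$ via Lemma \ref{lem-FT}(c), and then solve the same pair of ODEs for $\tilde q_T,\tilde\eta_T$ with $K_0T$ replaced by that quantity (the paper calls it $\tilde K_T$, you call it $K\rho(T)$). The circularity you flag is resolved in the paper exactly as you describe, by defining $\tilde K_T$ in \eqref{define-kT} using the a-priori constants $A_1=\epsilon_1\gamma_1$, $A_2=(\gamma_1+K_1)\epsilon_1/\beta$ inherited from Lemma \ref{sup-rapid} and then checking a posteriori that $\tilde A_1,\tilde A_2$ come out equal to them.
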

\begin{proof}
The proof is similar to that of Lemmas \ref{sup-rapid}-\ref{sub-rapid}, therefore we only provide the details when modifications are needed. We only show that $\tilde{v}^+_T(t,x)$ is a super-solution, since the sub-solution can be constructed in a similar way.

Let $\gamma_1>0$, $K_1>0$ and $\beta>0$ be the constants provided by \eqref{choose-gamma1}, \eqref{lip-cons} and \eqref{de-phi}, respectively.
Define
\begin{equation*}
	\tilde{N}(t,x):= \partial_t \tilde{v}^+_T(t,x)-\partial_{xx} \tilde{v}^+_T(t,x) -f\left( \frac{t}{T},\tilde{ v}^+_T(t,x) \right) \, \hbox{ for } \,(t,x)\in [0,+\infty)\times\R.
\end{equation*}
It suffices to prove that for small $T>0$, $\tilde{N}(t,x)\geq 0$ for all $(t,x)\in [0,+\infty)\times\R$. Thanks to \eqref{tw-homo} and \eqref{eq-FT}, one has
$$\baa{ll}
\tilde{N}(t,x)  =\!\!\! & \displaystyle
\phi_0'(\xi)\tilde{\eta}_T'(t)+\tilde{q}_T'(t)+\bar{f}(\phi_0(\xi))-\bar{f}(\phi_0(\xi)+\tilde{q}_T(t)) \vspace{6pt}\\
&  \displaystyle \quad  +\, f\left(\frac{t}{T},\phi_0(\xi)+\tilde{q}_T(t) \right) - f\left(\frac{t}{T},\phi_0(\xi)+\tilde{q}_T(t) +TF_T\left(\frac{t}{T}, \phi_0(\xi)+\tilde{q}_T(t) \right)   \right)  \vspace{6pt}\\
& \displaystyle \quad  +\, T \partial_uF_T\left(\frac{t}{T}, \phi_0(\xi)+\tilde{q}_T(t)\right) \left( \phi_0'(\xi)\tilde{\eta}_T'(t)+\tilde{q}_T'(t)-c_0\phi'(\xi) - \phi_0''(\xi)  \right) \vspace{6pt}\\
& \displaystyle \quad  - \, T \partial_{uu}F_T\left(\frac{t}{T}, \phi_0(\xi)+\tilde{q}_T(t)\right) (\phi_0'(\xi))^2
-TF_T\left(\frac{t}{T}, \phi_0(\xi)+\tilde{q}_T(t)\right) \eaa$$
for all $(t,x)\in [0,\infty)\times\R$, where $\xi=x-c_0t+\tilde{\eta}_T(t)$.
With $\tilde{q}_T$ and $\tilde{\eta}_T$ required to satisfy \eqref{choose-qt-1} and \eqref{choose-etat-1},
it follows from \eqref{lip-cons}, the boundedness of $\phi_0'(\cdot)$ and $\phi_0''(\cdot)$ in $\R$, and the boundedness of $F_T(s,u)$, $\partial_uF_T(s,u)$, $\partial_{uu}F_T(s,u)$ for $s\in\R$ and $u$ in bounded sets (due to Lemma \ref{lem-FT} (b)), that for each $T>0$, there holds
\begin{equation*}
	\tilde{N}(t,x) \geq \phi_0'(\xi)\tilde{\eta}_T'(t)+\tilde{q}_T'(t)+\bar{f}(\phi_0(\xi))-\bar{f}(\phi_0(\xi)+\tilde{q}_T(t)) - \tilde{K}_T
\end{equation*}
for all $(t,x)\in [0,\infty)\times\R$,
where $\tilde{K}_T$ is a positive constant given by
\begin{equation}\label{define-kT}
	\baa{ll}
	\tilde{K}_T =\!\!\! &  \sup_{s\in\R,\,\xi\in\R,\, u\in [-\epsilon_1,1+\epsilon_1]} T(K_1+1)  \left|F_T(s,u)\right| +T\left|\partial_{uu}F_T(s,u)  (\phi_0'(\xi))^2 \right| \vspace{6pt}\\
	&   \qquad\qquad\qquad\qquad\qquad +\,T\left|\partial_u F_T(s,u)\right| \left( (A_2+|c_0|) |\phi_0'(\xi)|+ A_1 + |\phi_0''(\xi)|\right).
	\eaa
\end{equation}
By Lemma \ref{lem-FT} (c), one has $\tilde{K}_T\to 0$ as $T\to 0^+$.
This in particular implies that there exists a small $\tilde{T}_1>0$ such that
\begin{equation*}%\label{choose-T1-1}
0<\tilde{K}_T<\epsilon_1\gamma_1 \, \hbox{ for all } \, 0<T<\tilde{T}_1.
\end{equation*}

Now, proceeding similarly as in the proof of Lemma \ref{sup-rapid}, for each $T\in (0,\tilde{T}_1)$, choosing
 $\tilde{q}_T$ and $\tilde{\eta}_T$ such that
\begin{equation*}
	\left\{\baa{ll}
	\displaystyle \tilde{q}_T'(t)+\gamma_1\tilde{q}_T(t)-\tilde{K}_T=0 \, \hbox{ for } \, t> 0 & \hbox{with } \,\, \tilde{q}_T(0)=\epsilon_1, \vspace{5pt}\\
	\displaystyle -\beta\tilde{\eta}_T'(t)+\tilde{q}_T'(t)-K_1\tilde{q}_T(t) - \tilde{K}_T=0  \, \hbox{ for } \, t> 0 & \hbox{with } \,\, \tilde{\eta}_T(0)=0,
	\eaa \right.
\end{equation*}
one can conclude that $\tilde{N}(t,x)\geq 0$ for all $(t,x)\in [0,\infty)\times\R$. It is straightforward to compute that
\begin{equation*}%\label{function-qt-1}
	\tilde{q}_T(t)=\frac{\tilde{K}_T}{\gamma_1}+\left(\epsilon_1-\frac{\tilde{K}_T}{\gamma_1}\right) \me^{-\gamma_1t}\, \hbox{ for } \, t\geq 0,
\end{equation*}
and
\begin{equation*}\label{function-eta-1}
	\tilde{\eta}_T(t)=-\frac{\gamma_1+K_1}{\beta\gamma_1}\left(\epsilon_1-\frac{\tilde{K}_T}{\gamma_1}\right)\left(1-\me^{-\gamma_1t}\right)-\frac{(\gamma_1+K_1)\tilde{K}_T}{\beta\gamma_1}t \, \hbox{ for } \, t\geq 0.
\end{equation*}
Clearly, the above functions satisfy \eqref{choose-qt-1} and \eqref{choose-etat-1} with $\tilde{A}_1=\epsilon_1\gamma_1$ and $\tilde{A}_2=((\gamma_1+K_1)\epsilon_1)/\beta$. This ends the proof of Lemma \ref{sup-rapid-ge}.
\end{proof}

We are now ready to complete
\begin{proof}[Proof of Theorem $\ref{th-rapid}$]
It remains to show \eqref{con-cT-c0} without the condition \eqref{assume-F}. Thanks to the preparations in Lemmas \ref{lem-FT}-\ref{sup-rapid-ge}, the proof is almost identical to that of Theorem \ref{restate-th}. Indeed, following the main lines as those used in the proof of Theorem \ref{restate-th}, with $TF(\cdot,\cdot)$, $q_T(\cdot)$ and $\eta_T(\cdot)$ replaced by $TF_T(\cdot,\cdot)$, $\tilde{q}_T(\cdot)$ and $\tilde{\eta}_T(\cdot)$ (which are provided by Lemma \ref{sup-rapid-ge}), respectively, one can conclude that
\begin{equation}\label{esti-ct-c0}
\left|c_T(t)+\xi_T^*-c_0t\right| \leq |\tilde{\eta}_T(t)|+\xi_0 \, \hbox{ for all large }\, t>0,
\end{equation}
where $\xi_T^*\in\R$ is a constant determined by the stability of $U_T$ (see \eqref{stable-vt}) and $\xi_0\in\R$ is a constant depending only on $\phi_0$ and $\epsilon_1$ (see \eqref{esti-phi0-vt}).
Multiplying both sides of \eqref{esti-ct-c0} by $1/t$, and then passing to the limits as $t\to+\infty$,
implies that
$$\left|\bar{c}_T-c_0\right| \leq \frac{(\gamma_1+K_1)}{\beta\gamma_1}\tilde{K}_T \, \hbox{ for all small }\, T>0,  $$
where $\tilde{K}_T$ is the constant defined in the proof of Lemma \ref{sup-rapid-ge} (see \eqref{define-kT}).
Then, in view of the fact that $\lim_{T\to 0^+}\tilde{K}_T=0$, taking the limits as $T\to 0^+$ in the above inequality, we obtain the convergence of $\bar{c}_T$ to $c_0$. This ends the proof.
\end{proof}	
%%%%%%%%%%%%%%%%%%%%%%%%%%%%%%%%%%%%%%%%
%%%%%%%%%%%%%%%%%%%%%%%%%%%%%%%%%%%%%%%%

\SE{Convergence in slowly oscillating environments}

This section is devoted to the proof of Theorem \ref{th-slow} on the convergence of $\bar{c}_T$ as $T\to+\infty$. 
Recall that for any $g\in H(f)$ and $s\in\R$, the couple $(\phi(\cdot;g\cdot s),c(g\cdot s))$ denotes the wave profile and wave speed for the homogeneous equation \eqref{tw-frozen}.  In Section 4.1, we collect some properties of $(\phi(\cdot;g\cdot s),c(g\cdot s))$ with respect to $g\in H(f)$ and $s\in\R$, which will be used in the construction of super- and sub-solution for equation \eqref{eq-main} when $T$ is large in Section 4.2. It should be pointed that, in the case where $f(s,u)$ is periodic in $s$ and of bistable form in the sense of \eqref{bis-form}, the main results stated in Section 4.1 have been proved in \cite[Proposition 4.1]{dhl}, but extra arguments are required to address the difficulties posed by the almost periodicity of $f$ and the possible existence of multiple zeros of $f(s,\cdot)$.

\subsection{Properties of traveling waves of frozen equations}
Throughout this subsection, let (A1), (A2) and (A5) hold, and we normalize each $\phi(\cdot;g\cdot s)$ uniquely in such a way that
\begin{equation}\label{normal-phi-g}
	\phi(0;g\cdot s)=\frac{1}{2}\, \hbox{ for each }\, g\in H(f),\, s\in\R.
\end{equation}
Notice that $\phi(+\infty;g\cdot s)=0$, $\phi(-\infty;g\cdot s)=1$, and that (A2) remains valid with  $f$ replaced by $g\in H(f)$. Namely, for each $g\in H(f)$ and $s\in\R$, there holds $g(s,0)=g(s,1)=0$, and
\begin{equation}\label{gsn-delta}
	\left\{\baa{ll}
	g(s,u)\le-\gamma_0 u & \hbox{for all } \,  u\in [-\delta_0,\delta_0],\vspace{5pt}\\
	g(t,u)\ge\gamma_0(1-u) & \hbox{for all }\,  u\in [1-\delta_0,1+\delta_0].  \eaa\right.
\end{equation}
Then, by classical theory on traveling wave for bistable or multistable equations, it is known that the wave profile $\phi(\cdot;g\cdot s)$ approaches the steady states $0$ and $1$ exponentially fast and the wave $\phi(x-c(g\cdot s)t;g\cdot s)$ is globally stable. As these properties will be frequently used later, we summarize them in the following lemma.

\begin{lem}\label{basic-phi}{\rm (\cite{fm})}
Let $(\phi(\cdot;g\cdot s),c(g\cdot s))$ be the unique solution of \eqref{tw-frozen} with the normalization condition \eqref{normal-phi-g}. Then the following statements hold true.
\begin{itemize}
	\item [{\rm (i)}] There exist some positive constants $A_1$ and $A_2$ (depending on $g$ and $s$) such that
  \begin{equation*}
  	\left\{\baa{ll}
  \phi(\cdot;g\cdot s) \sim A_1 \me^{-\lambda_1(g\cdot s)\xi}  & \hbox{as } \,\, \xi\to +\infty,\vspace{5pt}\\
  1- \phi(\cdot;g\cdot s) \sim A_2 \me^{\lambda_2(g\cdot s)\xi}  & \hbox{as } \,\, \xi\to -\infty,  \eaa\right.
  \end{equation*}
where $\lambda_1(g\cdot s)$ and $\lambda_2(g\cdot s)$ are positive constants given by
 \begin{equation}\label{decay-rate}
	\left\{\baa{l}
	\displaystyle   \lambda_1(g\cdot s)=\frac{c(g\cdot s)+\sqrt{(c(g\cdot s))^2-4\partial_ug(s,0)}}{2},\vspace{6pt}\\
    \displaystyle	\lambda_2(g\cdot s)=\frac{-c(g\cdot s)+\sqrt{(c(g\cdot s))^2-4\partial_ug(s,1)}}{2}.  \eaa\right.
\end{equation}
	\item [{\rm (ii)}] Let $u(t,x;g\cdot s,u_0)$ be the solution of the Cauchy problem of \eqref{frozen-eq-g} with initial function $u_0\in C(\R)\cap L^{\infty}(\R)$ satisfying
	$$\liminf_{x\to-\infty} u_0(x) \geq 1-\delta_0 \quad \hbox{and}\quad \liminf_{x\to +\infty} u_0(x) \leq \delta_0, $$
	where $\delta_0\in (0,1/2)$ is the small constant provided by {\rm (A2)}. There holds
	\begin{equation*}
		\sup_{x\in\R} |u(t,x;g\cdot s,u_0)-\phi(x-c(g\cdot s)t+\xi_0;g\cdot s)|\to 0 \, \hbox{ as } \,t\to+\infty,
	\end{equation*}
	for some constant $\xi_0\in\R$ (depending on $g$ and $s$).
\end{itemize}
\end{lem}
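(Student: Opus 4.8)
The plan is to run, pointwise in $g\in H(f)$ and $s\in\R$, the classical bistable theory of Fife and McLeod \cite{fm}, recording along the way that the only structure of $g(s,\cdot)$ it needs is $g(s,0)=g(s,1)=0$, the uniform stability \eqref{gsn-delta} (so that $\partial_u g(s,0),\partial_u g(s,1)\le-\gamma_0<0$) and the $C^{2}$ regularity inherited from $f$. I abbreviate $c:=c(g\cdot s)$ and $\phi:=\phi(\cdot\,;g\cdot s)$, and take the wave itself as given by (A5), so that only the exponential asymptotics in (i) and the stability in (ii) need to be established.

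For (i), I would pass to the planar system $\phi'=p$, $p'=-cp-g(s,\phi)$, for which the travelling-wave orbit $\xi\mapsto(\phi(\xi),\phi'(\xi))$ is a heteroclinic connection from $(1,0)$ to $(0,0)$. The linearisation at $(0,0)$ has characteristic polynomial $\mu^{2}+c\mu+\partial_u g(s,0)$; since $\partial_u g(s,0)<0$ this is a hyperbolic saddle, whose one-dimensional stable manifold is tangent to the eigenvector for the negative root $-\lambda_1(g\cdot s)$, with $\lambda_1(g\cdot s)$ the positive number in \eqref{decay-rate}. As $\xi\to+\infty$ the orbit runs on this stable manifold, and since the manifold is one-dimensional a non-equilibrium orbit on it has leading behaviour exactly $A_1\me^{-\lambda_1(g\cdot s)\xi}$ with $A_1\ne0$; the positivity $\phi>0$ forces $A_1>0$. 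The same argument at the saddle $(1,0)$ --- where the connection leaves along the one-dimensional unstable manifold, of eigenvalue the positive root $\lambda_2(g\cdot s)$ of $\mu^{2}+c\mu+\partial_u g(s,1)$ --- yields $1-\phi\sim A_2\me^{\lambda_2(g\cdot s)\xi}$ as $\xi\to-\infty$ with $A_2>0$. The discriminants in \eqref{decay-rate} exceed $c^{2}$ since $\partial_u g(s,0),\partial_u g(s,1)<0$, so there is no degenerate case.

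For (ii), I would use the standard squeezing argument. Using (i), fix $M>0$ so that $\phi(\xi)$ lies, for $|\xi|\ge M$, in the neighbourhoods of $0$ and of $1$ on which $\partial_u g(s,\cdot)\le-\gamma_1<0$ (the analogue of \eqref{choose-gamma1}, which follows from the uniform $C^{2}$ bound), and note that $\phi'$ is bounded on $\R$ and bounded away from $0$ on $[-M,M]$. For small $q>0$, a constant $\beta\in(0,\gamma_1]$, and $\sigma>0$ large (depending on the Lipschitz constant of $g(s,\cdot)$ on $[-1,2]$ and on $\min_{[-M,M]}|\phi'|$), one checks --- by exactly the region-by-region computation carried out in the proof of Lemma~\ref{sup-rapid} --- that
\[
w^{\pm}(t,x)=\phi\big(x-ct\mp\sigma q(1-\me^{-\beta t})\big)\pm q\,\me^{-\beta t}
\]
are a super- and a sub-solution of \eqref{frozen-eq-g} on $[0,\infty)\times\R$: on the two tails the shift term has a favourable sign and the contraction $\partial_u g(s,\cdot)\le-\gamma_1$ closes the estimate once $\beta\le\gamma_1$, while on the bounded middle region the strict monotonicity of $\phi$ lets the term $\sigma q\beta\me^{-\beta t}|\phi'|$ absorb the reaction error once $\sigma$ is large. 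Given $u_0$ with the stated one-sided behaviour at $\pm\infty$, I would argue that at some time $t_1>0$ the solution $u(t_1,\cdot\,;g\cdot s,u_0)$ is caught between two translates $\phi(\cdot+\zeta^{-})-q$ and $\phi(\cdot+\zeta^{+})+q$ with $\zeta^{-}\le\zeta^{+}$: the stability of the steady states $0$ and $1$, applied through the comparison principle and parabolic smoothing, pins $u$ near $1$ on far-left half-lines and near $0$ on far-right half-lines, and on the remaining compact set the $L^{\infty}$ bounds and the exponential tails from (i) produce the sandwich. From $t_1$ on, comparison with the suitably translated barriers gives $w^{-}\le u\le w^{+}$, so $u$ stays within $O(q)$ of a translate of $\phi$; a standard limiting argument --- using the uniqueness of the wave up to translation and, if needed, a zero-number argument --- then upgrades this to convergence to a single translate $\phi(\cdot-ct+\xi_0)$, as claimed.

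The step I expect to be the main obstacle is this last reduction to front-like data, precisely because $g(s,\cdot)$ may be of multistable type: a priori a solution could stagnate on a plateau at another stable zero of $g(s,\cdot)$, so the wave between $0$ and $1$ need not be globally attracting. This is excluded here by assumption (A5), which provides an \emph{honest} travelling wave connecting $0$ and $1$: by the propagating-terrace theory (cf. \cite{po,dll}) the existence of such a wave forces the minimal terrace between $0$ and $1$ to consist of that single layer, and the general terrace stability result then yields (ii), of which the explicit barrier computation above is the one-layer instance. In the genuinely bistable case this complication disappears and (ii) is literally the Fife--McLeod theorem \cite{fm}.
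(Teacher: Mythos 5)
The paper does not prove this lemma; it simply cites Fife--McLeod \cite{fm} and observes that under (A2) the two limiting states $0$ and $1$ are non-degenerate stable zeros, so the cited results apply. Your proposal reconstructs exactly the arguments behind that citation: the phase-plane analysis at the two hyperbolic saddles gives (i) with the advertised decay rates $\lambda_1(g\cdot s)$ and $\lambda_2(g\cdot s)$, and the squeezing argument with the super-/sub-solutions
$w^{\pm}(t,x)=\phi(x-ct\mp\sigma q(1-\me^{-\beta t}))\pm q\,\me^{-\beta t}$
gives (ii). So this is essentially the same approach as the paper, only written out.

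A couple of small remarks. First, your worry in the last paragraph is well placed but slightly overstated: once A5 hands you a decreasing wave connecting $0$ and $1$, the squeezing argument you describe already proves (ii) directly and does not need any terrace-theoretic input; the preliminary "reduction to front-like data" step only uses the linear stability of $0$ and $1$ from (A2), the exponential tails from (i), and the comparison principle, none of which is sensitive to whether $g(s,\cdot)$ has extra zeros in $(0,1)$. The terrace machinery becomes essential for the \emph{existence} question (as in Proposition~\ref{multi-existence}), not for the stability of a wave that is already given. Second, you correctly flag that \cite{fm} is literally a bistable paper while the lemma is stated under the weaker hypothesis (A2); your explicit note that the sub-/super-solution construction uses only $\partial_u g(s,0),\partial_u g(s,1)\le-\gamma_0$, the monotonicity $\phi'<0$, and the $C^2$ bounds makes this extension precise, which is a useful clarification beyond the paper's bare citation.
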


\begin{rem}\label{rem-expon}
We remark that statement {\rm (i)} actually holds in a more general situation. More precisely, for any traveling wave solution $\tilde{\phi}(x-\tilde{c})$ of \eqref{frozen-eq-g} with $\tilde{\phi}(\cdot)$ being decreasing in $\R$ and $\tilde{\phi}(+\infty)=0$ (resp. $\tilde{\phi}(-\infty)=1$), one has $\tilde{\phi}(\cdot) \sim A_1 \me^{-\tilde{\lambda}_1\xi}$ as $\xi\to+\infty$ (resp. $ 1- \tilde{\phi}(\cdot) \sim A_2 \me^{\tilde{\lambda}_2\xi}$), where  $\tilde{\lambda}_1$ and $\tilde{\lambda}_2$ are defined similarly as in \eqref{decay-rate} with $c(g\cdot s)$ changed by $\tilde{c}$ (see {\rm \cite{aw,fm}}).
\end{rem}

Our first main result in this subsection concerns the continuity of the functions $\phi(\cdot;g\cdot s)$ and $c(g\cdot s)$ with respect to $s\in\R$ and $g\in H(f)$. To do so, let us first show the uniform boundedness of the wave speed $c(g\cdot s)$.

\begin{lem}\label{speed-bound}
The set $\{c(g\cdot s): g\in H(f), s\in\R \}$ is bounded.
\end{lem}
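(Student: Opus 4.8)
The plan is to bound $c(g\cdot s)$ by a constant depending only on the structural data $\gamma_0$, $\delta_0$, and the uniform bounds on $f$ (hence on all $g\in H(f)$ by Lemma \ref{almost-partial}), exploiting the classical relationship between the wave speed and the ``potential'' $\int_0^1 g(s,u)\,du$ together with a priori gradient bounds. First I would fix $g\in H(f)$ and $s\in\R$ and abbreviate $c=c(g\cdot s)$, $\phi=\phi(\cdot;g\cdot s)$, so that $\phi''+c\phi'+g(s,\phi)=0$ on $\R$ with $\phi(-\infty)=1$, $\phi(+\infty)=0$, $\phi'<0$. The standard energy identity: multiply the ODE by $\phi'$ and integrate over $\R$, using $\phi(\pm\infty)\in\{0,1\}$ and $\phi'(\pm\infty)=0$ (the latter from Lemma \ref{basic-phi}(i) / Remark \ref{rem-expon}, which give exponential decay of $\phi$ and $\phi'$ at both ends), to obtain
\begin{equation*}
c\int_{\R}(\phi'(\xi))^2\,d\xi=\int_0^1 g(s,u)\,du.
\end{equation*}
Since the right-hand side is bounded uniformly in $g$ and $s$ by $\|f\|_{L^\infty(\R\times[0,1])}$ (using $|g|\le \|f\|_\infty$ on $[0,1]$ uniformly for $g\in H(f)$), it suffices to bound $\int_{\R}(\phi')^2$ from below by a positive constant independent of $g$ and $s$.

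To get that lower bound, the key observation is that $\phi$ must traverse from near $1$ to near $0$, but its slope cannot be too steep because of a uniform gradient estimate. Concretely, from the integral/ODE estimates one has a bound $\|\phi'\|_{L^\infty(\R)}\le \Lambda$ with $\Lambda$ depending only on $\|f\|_\infty$ and, say, an upper bound on $|c|$ — but to avoid circularity I would instead argue via the level sets. Let $\xi_1<\xi_2$ be such that $\phi(\xi_1)=1-\delta_0$ and $\phi(\xi_2)=\delta_0$; by (A2)/\eqref{gsn-delta} and a standard comparison/phase-plane argument for the frozen equation, the ``width'' $\xi_2-\xi_1$ of this transition region is bounded \emph{above} uniformly (one can compare $\phi$ on $[\xi_1,\xi_2]$ against explicit sub/super-solutions built from the linear bounds in \eqref{gsn-delta} and the fact that $g(s,u)$ stays bounded for $u\in[\delta_0,1-\delta_0]$; alternatively invoke a Harnack-type estimate). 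Then
\begin{equation*}
1-2\delta_0=\phi(\xi_1)-\phi(\xi_2)=-\int_{\xi_1}^{\xi_2}\phi'(\xi)\,d\xi\le (\xi_2-\xi_1)^{1/2}\Big(\int_{\R}(\phi')^2\Big)^{1/2},
\end{equation*}
so $\int_\R(\phi')^2\ge (1-2\delta_0)^2/(\xi_2-\xi_1)$, giving the desired uniform lower bound once $\xi_2-\xi_1$ is uniformly bounded above. Feeding this back into the energy identity yields $|c(g\cdot s)|\le C$ with $C$ independent of $g\in H(f)$ and $s\in\R$.

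I expect the main obstacle to be establishing the \emph{uniform upper bound on the transition width} $\xi_2-\xi_1$ (equivalently, a uniform lower bound on the $L^2$ norm of $\phi'$), since everything else is a one-line integration by parts. The difficulty is that $g(s,\cdot)$ can be of multistable type with extra zeros between $\delta_0$ and $1-\delta_0$, so there is no clean monotone ODE structure on that interval; however, $g(s,u)$ and $\partial_u g(s,u)$ are bounded there uniformly in $g,s$ (Lemma \ref{almost-partial}), and on the collar intervals $[1-\delta_0,1)$ and $(0,\delta_0]$ the sign conditions \eqref{gsn-delta} force definite exponential-type approach to the endpoints with rates controlled by $\gamma_0$ and $|c|$ — so a first reduction might be to bound $|c|$ already from the endpoint behavior, then close the argument. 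An alternative route that sidesteps the width estimate entirely: derive an a priori \emph{two-sided} gradient bound $c_1\le |\phi'|_{\max}\le c_2$ purely from parabolic/elliptic estimates applied to the travelling-wave ODE written as a second-order equation with bounded right-hand side, combined with the normalization $\phi(0)=1/2$; then the level-set argument goes through with $\xi_2-\xi_1\le (1-2\delta_0)/c_1$. I would present whichever of these is cleanest, and I anticipate the write-up will hinge on making the ``bounded slope / bounded width'' step rigorous uniformly over the hull.
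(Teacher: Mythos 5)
Your energy identity $c\int_\R(\phi')^2\,d\xi=\int_0^1 g(s,u)\,du$ is correct, and the right-hand side is indeed uniformly bounded, so the whole argument hinges — as you yourself flag — on a \emph{uniform lower bound} on $\int_\R(\phi')^2$, or equivalently a uniform upper bound on the transition width $\xi_2-\xi_1$. That is precisely where the proposal breaks down, and I do not think it can be repaired along the lines you sketch. At the point of maximal slope one has $\phi''=0$, hence $|c|\,\|\phi'\|_\infty=|g(s,\phi(\xi_0))|\le C_0$, so $\|\phi'\|_\infty\le C_0/|c|$; combined with $\int|\phi'|=1$ this gives $\int(\phi')^2\le C_0/|c|$ and then $\xi_2-\xi_1\ge(1-2\delta_0)^2|c|/C_0$. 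In other words, the width is \emph{not} a priori bounded — it must grow at least linearly with $|c|$ — so a ``uniform width bound'' is logically equivalent to the speed bound you are trying to prove, not an independent input. Your fallback routes do not escape this: the elliptic/parabolic gradient estimates for the wave ODE carry the drift coefficient $c$ inside the constants, and the uniform decay of $\phi(\cdot;g\cdot s)$ as $\xi\to\pm\infty$ that would let you bound the width by a Cauchy--Schwarz/tail-cutting argument is Proposition~\ref{uniform-decay}(i), which in the paper is proved \emph{after} and \emph{using} Lemma~\ref{speed-bound} (via Proposition~\ref{continuity}), so invoking it here would be circular. There is also a secondary issue in the multistable setting allowed by (A5): $g(s,\cdot)$ may have nearly degenerate interior zeros in $(\delta_0,1-\delta_0)$, along which $\phi'$ can be made small without any conclusion about $c$, so even a true speed bound would not by itself give the width bound you invoke.

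The paper sidesteps all of this with a comparison argument that never estimates $\phi'$: using Lemma~\ref{almost-partial} and \eqref{gsn-delta} one constructs two fixed bistable nonlinearities $g^\pm$ with $g^-\le g(s,\cdot)\le g^+$ on $[0,1]$ uniformly over $g\in H(f)$ and $s\in\R$, lets $c_0^\pm$ be the corresponding classical bistable wave speeds, and then shows $c_0^-\le c(g\cdot s)\le c_0^+$ by comparing the solution of the $g^-$ (resp.\ $g^+$) Cauchy problem started from $\phi(\cdot;g\cdot s)$ with the wave $\phi(x-c(g\cdot s)t;g\cdot s)$ and invoking the global stability of bistable waves (Lemma~\ref{basic-phi}(ii)) together with the normalization $\phi(0;g\cdot s)=1/2$. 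This is both self-contained (no uniform estimate on the profile is needed) and robust to the possible multistable structure of $g(s,\cdot)$, which your energy-identity route is not. I would recommend abandoning the lower bound on $\int(\phi')^2$ and adopting the sandwich-and-compare argument.
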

\begin{proof}
Since $f$, $\partial_uf$ and $\partial_{uu}f$ are bounded in $s\in\R$ and $u$ in bounded sets, it follows from Lemma \ref{almost-partial} that $g$ and $\partial_u g$ are bounded in $s\in\R$, $u\in [-\delta_0,1+\delta_0]$ uniformly for $g\in H(f)$. This implies that there exists some $\gamma'_0>\gamma_0$ (independent of $g$ and $s$) such that
\begin{equation*}
	\left\{\baa{ll}
	g(s,u)\ge-\gamma'_0 u & \hbox{for all } \,  u\in [-\delta_0,\delta_0],\vspace{5pt}\\
	g(t,u)\le\gamma'_0(1-u) & \hbox{for all }\,  u\in [1-\delta_0,1+\delta_0].  \eaa\right.
\end{equation*}
Combining this with \eqref{gsn-delta}, one can find two smooth functions $g^{\pm}: \R \to \R$ which are of bistable type in the sense that
$$g^{\pm}(0)=g^{\pm}(1)=0,\quad g^{\pm}(u)<0 \, \hbox{ in } \,  (0,\theta^{\pm}),\quad g^{\pm}(u)>0  \,\hbox{ in } \, (\theta^{\pm}, 1),$$
for some $\delta_0< \theta^{+}\leq \theta^{-} < 1-\delta_0$, and satisfy $(g^{\pm})'(0)<0$, $(g^{\pm})'(1)<0$, and
\begin{equation}\label{auto-sub-super}
	g^-(u) \leq g(s,u)  \leq g^+(u)  \, \hbox{ for all }\, u\in [0,1],\,g\in H(f),\, s\in\R.
\end{equation}
By classical theory on traveling wave solutions of bistable equations \cite{fm}, the following two homogeneous equations
\begin{equation}\label{eq-auto-sub}
	v^{\pm}_t=v^{\pm}_{xx}+ g^{\pm}(v^{\pm}) \, \hbox{ for } \, t\in\R,\, x\in\R,
\end{equation}
admit a unique (up to shifts) traveling wave solution $v^{\pm}(t,x)=\phi^{\pm}(x-c_0^{\pm}t)$ satisfying $\phi^{\pm}(-\infty)=1$ and $\phi^{\pm}(+\infty)=0$.

Now, we claim that
\begin{equation*}%\label{claim-bound-c}
c^-_0\leq c(g \cdot s) \leq c^+_0 \, \hbox{ for all } \, g\in H(f),\, s\in\R.
\end{equation*}
We only show the first inequality in the above claim, as the proof of the second one is similar. Indeed, thanks to \eqref{auto-sub-super},
by the comparison principle for parabolic equations, for each $g\in H(f)$ and $s\in\R$, there holds
\begin{equation}\label{comp-v-phin}
v^-(t,x;\phi(\cdot;g \cdot s)) \leq \phi(x-c(g \cdot s)t;g\cdot s) \, \hbox{ for all } \, t\geq 0,\,x\in\R,
\end{equation}
where $v^-(t,x;\phi(\cdot;g\cdot s))$ is the solution of the Cauchy problem of \eqref{eq-auto-sub} with nonlinearity $g^-$ and initial function $v^-(0,x;\phi(\cdot;g \cdot s))=\phi(x;g \cdot s)$. On the other hand,
by the stability of bistable traveling waves (see Lemma \ref{basic-phi} (ii)), there exists some $\xi_*=\xi_*(g\cdot s)$ such that
$$|v^-(t,x;\phi(\cdot;g\cdot s))- \phi^-(x-c^-_0t+\xi_*)|\to 0\, \hbox{ as }\, t\to+\infty \,\hbox{ uniformly in } \,x\in\R. $$
With choosing $x=c(g\cdot s)t$, this together with \eqref{comp-v-phin} and \eqref{normal-phi-g} implies that
\begin{equation}\label{phi--phin}
 \lim_{t\to+\infty} \phi^-((c(g\cdot s)-c^-_0)t+\xi_*) \leq  \phi(0;g\cdot s) =\frac{1}{2} \, \hbox{ for all }\, g\in H(f),\, s\in\R.
\end{equation}
Since $\phi^-(-\infty)=1$, there must hold $c(g\cdot s)\geq c^-_0$. Proceeding similarly as above, one can prove that $c(g\cdot s)\leq c^+_0$. This completes the proof of Lemma \ref{speed-bound}.
\end{proof}

\begin{pro}\label{continuity}
For any sequences $(s_n)_{n\in\N}\subset \R$ and $(g_n)_{n\in\N}\subset H(f)$ such that
\begin{equation}\label{sequence-gn-sn}
g_n(s_n,u) \to g_*(s_*,u)\,\,\hbox{ uniformly in } \,\, u\in [0,1] \,\,\hbox{ as }\,\, n\to\infty
\end{equation}
for some $s_*\in\R$ and $g_*\in H(f)$, there holds
\begin{equation*}
	c(g_n\cdot s_n) \to c(g_*\cdot s_*)\,\,\hbox{ as }\,\, n\to\infty,
\end{equation*}
 and
\begin{equation*}
\phi(\cdot;g_n\cdot s_n) \to \phi(\cdot;g_*\cdot s_*)\,\,\hbox{ in }\,\,C^{2}_{loc}(\R)\,\,\hbox{ as } \,\, n\to\infty.
\end{equation*}
\end{pro}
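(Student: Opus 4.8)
The plan is to argue by compactness, using the uniform exponential decay estimates from Lemma \ref{basic-phi}(i), together with standard parabolic (elliptic) estimates applied to the profile equation \eqref{tw-frozen}, and finally invoking the uniqueness of bistable/multistable traveling waves to identify the limit. First I would establish that the speeds stay in a bounded interval: by Lemma \ref{speed-bound} the sequence $(c(g_n\cdot s_n))_{n\in\N}$ is bounded, so up to a subsequence $c(g_n\cdot s_n)\to c_\infty$ for some $c_\infty\in\R$. Next I would extract a limit profile: each $\phi(\cdot;g_n\cdot s_n)$ solves the second-order ODE in \eqref{tw-frozen} with coefficients $c(g_n\cdot s_n)$ and nonlinearity $g_n(s_n,\cdot)$, both of which converge (the latter uniformly on $[0,1]$ by \eqref{sequence-gn-sn}), and each $\phi(\cdot;g_n\cdot s_n)$ is monotone decreasing with values in $[0,1]$ and normalized by $\phi(0;g_n\cdot s_n)=1/2$. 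Interior elliptic (Schauder) estimates, bootstrapped using the $C^2$ regularity of the nonlinearities, then give a subsequence along which $\phi(\cdot;g_n\cdot s_n)\to\phi_\infty$ in $C^2_{loc}(\R)$, where $\phi_\infty$ is a nonincreasing $[0,1]$-valued solution of $\phi_\infty''+c_\infty\phi_\infty'+g_*(s_*,\phi_\infty)=0$ with $\phi_\infty(0)=1/2$.

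The main obstacle is to rule out loss of mass at infinity, i.e.\ to show $\phi_\infty(-\infty)=1$ and $\phi_\infty(+\infty)=0$ so that $\phi_\infty$ is a genuine traveling wave connecting the two states and not, say, a constant or a wave connecting $0$ (or $1$) to some intermediate zero of $g_*(s_*,\cdot)$. This is where the uniform exponential bounds matter: using \eqref{auto-sub-super}, the homogeneous bistable sub/super-nonlinearities $g^\pm$ of Lemma \ref{speed-bound}, together with Remark \ref{rem-expon}, I would derive bounds on $\phi(\cdot;g_n\cdot s_n)$ that are uniform in $n$. Concretely, one can sandwich each $\phi(\cdot;g_n\cdot s_n)$ between appropriately shifted copies of $\phi^-$ and $\phi^+$ using the comparison/stability argument already carried out in the proof of Lemma \ref{speed-bound} (the inequality \eqref{comp-v-phin} and the line \eqref{phi--phin}); since $\phi^\pm$ decay exponentially with rates controlled by the uniformly bounded speeds $c_0^\pm$, this yields estimates of the form $\phi(\xi;g_n\cdot s_n)\le C\me^{-\lambda\xi}$ for $\xi\ge 0$ and $1-\phi(\xi;g_n\cdot s_n)\le C\me^{\lambda\xi}$ for $\xi\le 0$, with $C,\lambda>0$ independent of $n$. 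Passing to the limit, $\phi_\infty$ inherits the same decay, hence $\phi_\infty(+\infty)=0$ and $\phi_\infty(-\infty)=1$, so $(\phi_\infty,c_\infty)$ solves \eqref{tw-frozen} with $g=g_*$, $s=s_*$, and is normalized.

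Finally, by the uniqueness of the traveling wave for the bistable/multistable equation \eqref{frozen-eq-g} with nonlinearity $g_*(s_*,\cdot)$ (which holds since $0$ and $1$ are linearly stable zeros, see \cite{fm} and the discussion following \eqref{tw-frozen}), the speed $c_\infty$ is unique and the normalized profile $\phi_\infty$ equals $\phi(\cdot;g_*\cdot s_*)$. Since the limit $(c(g_*\cdot s_*),\phi(\cdot;g_*\cdot s_*))$ is the same for every convergent subsequence, the whole sequences converge: $c(g_n\cdot s_n)\to c(g_*\cdot s_*)$ and $\phi(\cdot;g_n\cdot s_n)\to\phi(\cdot;g_*\cdot s_*)$ in $C^2_{loc}(\R)$. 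A subsequence-extraction remark at the end closes the argument. I expect the bookkeeping for the uniform exponential bounds — making the constants $C,\lambda$ genuinely independent of $n$ via the $n$-uniform sub/super-solutions $g^\pm$ — to be the only delicate point; everything else is routine compactness plus the already-available uniqueness theory.
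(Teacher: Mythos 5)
The proposal correctly identifies the crux (ruling out loss of mass at $\pm\infty$), but the route it proposes to handle it does not work and is, in fact, circular. Let me explain where it breaks down.

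You claim that the comparison/stability argument in the proof of Lemma \ref{speed-bound} (the inequalities \eqref{comp-v-phin} and \eqref{phi--phin}) ``sandwiches each $\phi(\cdot;g_n\cdot s_n)$ between appropriately shifted copies of $\phi^-$ and $\phi^+$'' with shifts that are uniform in $n$, hence produces decay bounds $\phi(\xi;g_n\cdot s_n)\le C\me^{-\lambda\xi}$ and $1-\phi(\xi;g_n\cdot s_n)\le C\me^{\lambda\xi}$ with $C,\lambda$ independent of $n$. But that argument does not give a \emph{profile} comparison at all. What \eqref{comp-v-phin} and \eqref{phi--phin} yield is: after evolving the initial datum $\phi(\cdot;g\cdot s)$ under the Cauchy problem with nonlinearity $g^\pm$, the solution approaches a shifted $\phi^\pm(x-c_0^\pm t+\xi_*)$, and comparing moving frames forces the \emph{speed} inequality $c_0^-\le c(g\cdot s)\le c_0^+$. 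When $c(g\cdot s)$ lies strictly between $c_0^-$ and $c_0^+$ (the generic case), the term $\phi^\pm\big((c(g\cdot s)-c_0^\pm)t+\cdot\big)$ degenerates to $0$ or $1$ as $t\to\infty$, so one recovers only the trivial bounds $0\le\phi\le 1$. Moreover, the shift $\xi_*$ in the stability estimate depends on $g\cdot s$ and is not controlled uniformly. There is no uniform sandwiching of the stationary profiles $\phi(\cdot;g_n\cdot s_n)$ by translates of $\phi^\pm$ available from that lemma.

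More structurally, the uniform exponential decay you want is exactly what the paper establishes in Proposition \ref{uniform-decay} — and that proposition is proved \emph{using} Proposition \ref{continuity}. Obtaining a uniform ``cut-off'' $M$ with $\phi(\xi;g_n\cdot s_n)\le\delta_0$ for $\xi\ge M$ (after which \eqref{gsn-delta} and a one-sided comparison give exponential decay) is equivalent to ruling out loss of mass, i.e.\ to the statement that the transition layer of $\phi_n$ does not drift off to $+\infty$. So you are assuming, in a thinly disguised form, the very thing you set out to prove, and no tool available before Proposition \ref{continuity} justifies it — especially since the paper explicitly allows $g_*(s_*,\cdot)$ to be multistable, so a limit connecting $0$ to an intermediate zero is a genuine possibility that must be excluded by a nontrivial argument.

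The paper's actual proof is organized quite differently precisely to avoid this circularity. It re-normalizes by choosing $y_n$ with $\phi(y_n;g_n\cdot s_n)=\delta_0$ and takes limits of $\psi_n(\xi)=\phi(\xi+y_n;g_n\cdot s_n)$; since $g_*(s_*,\cdot)$ has no zeros in $(0,\delta_0)$, the limit $\psi_\infty$ automatically has $\psi_\infty(+\infty)=0$ (and $\psi_\infty(-\infty)\in(\delta_0,1]$), giving a ``half'' wave without any a priori uniform decay. Comparing $\psi_\infty(x-c_\infty t)$ with the globally stable wave $\phi(x-c(g_*\cdot s_*)t;g_*\cdot s_*)$ then forces $c_\infty\le c(g_*\cdot s_*)$; the symmetric normalization $\phi(z_n;g_n\cdot s_n)=1-\delta_0$ forces $c_\infty\ge c(g_*\cdot s_*)$, so the speeds match. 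Only then, knowing that $\psi_\infty$ and $\phi(\cdot;g_*\cdot s_*)$ decay at the \emph{same} sharp exponential rate $\lambda_1(g_*\cdot s_*)$, does the paper run a sliding argument (with the two cases on the leading prefactors, and an ODE asymptotic stability result to handle the equality case) to conclude $\psi_\infty(-\infty)=1$, and hence identify the profile limit via uniqueness. That ``match the speed first, then slide using equal decay rates'' structure is essential and has no counterpart in your proposal; it is not bookkeeping but the core of the argument.
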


\begin{proof}
Let us first  show the convergence of $c(g_n\cdot s_n)$ to $c(g_*\cdot s_*)$  as $n\to \infty$.  By Lemma \ref{speed-bound}, up to extraction of a subsequence, we may assume that $c(g_n\cdot s_n) \to c_{\infty}$ as $n\to \infty$ for some $c_{\infty}\in [c^-_0, c^+_0]$. To complete the proof, it suffices to show that $c_{\infty}= c(g_*\cdot s_*)$.
We first prove that $c_{\infty}\geq c(g_*\cdot s_*)$. For each $n\in\N$, let $y_n$ be the unique real number such that
\begin{equation}\label{normal-phi-new}
\phi(y_n;g_n\cdot s_n)=\delta_0.
\end{equation}
Writing $\psi_{n}(\xi)=\phi(\xi+y_n;g_n\cdot s_n)$, we see that $\psi_{n}(\xi)$ is decreasing in $\xi\in\R$ and
\begin{equation*}
	\left\{\baa{l}
	\psi_n''+c(g_n\cdot s_n)\psi_n'+g_n(s_n, \psi_n)=0   \,\,\hbox{ in }\R,\vspace{5pt}\\
	0< \psi_n < 1\hbox{ in }\R\quad\hbox{and}\quad \psi_n(0)=\delta_0.
	\eaa\right.
\end{equation*}
Notice that by Lemma \ref{almost-partial}, $\partial_ug_n(s,u)$ is bounded in $s\in\R$, $u\in [0,1]$ uniformly for $n\in\N$.
By using \eqref{sequence-gn-sn} and standard elliptic estimates, we find a $C^2(\R)$ nonincreasing function $0\leq \psi_{\infty}\leq 1$ such that, up to extraction of a further subsequence, $\psi_n\to \psi_{\infty}$ in $C^2_{loc}(\R)$ as $n\to+\infty$, and $\psi_{\infty}$ solves
\begin{equation}\label{eq-psi-infty}
\psi_{\infty}''+c_{\infty}\psi_{\infty}'+g_*(s_*,\psi_{\infty})=0\,\,\hbox{ in }\,\,\R,\quad \hbox{and} \quad \psi_{\infty}(0)=\delta_0.
\end{equation}
By the strong maximum principle, we have $0<\psi_{\infty}<1$ in $\R$. Since $\psi_{\infty}$ is nonincreasing, the limits $\psi_{\infty}(\pm\infty):=\lim_{\xi\to\pm\infty}\psi_{\infty}(\xi)$ exist and they are zeros of $g_*(s_*,\cdot)$. Since $g_*(s_*,\cdot)$ has no zeros in $(0,\delta_0)$ (due to \eqref{gsn-delta}) and since $\psi_{\infty}(0)=\delta_0$, we have $\psi_{\infty}(+\infty)=0$ and $\psi_{\infty}(-\infty)\in (\delta_0,1]$. In other words, $\psi_{\infty}(x-c_{\infty}t)$ is a traveling wave solution of the following equation
\begin{equation}\label{cauchy-g*}
	v_t=v_{xx}+g_*(s_*,v)\,\, \hbox{ for }\,  t\in\R, \,x\in\R,
\end{equation}
which connects $0$ and some zero of the function $g_*(s_*,\cdot)$ in $(\delta_0,1]$. Let $v_0\in C(\R)\cap L^{\infty}(\R)$ be a nonincreasing function such that $v_0(-\infty)\in (1,1+\delta_0)$, $v_0(+\infty)\in (0,\delta_0)$ and $v_0(\cdot) \geq \psi_{\infty}(\cdot)$ in $\R$, and let $v(t,x;v_0)$ be the solution of the Cauchy problem of \eqref{cauchy-g*} with initial function $v(t,x;v_0)=v_0(x)$ in $\R$. The comparison principle implies that $\psi_{\infty}(x-c_{\infty}t) \leq v(t,x;v_0)$ for all $t\geq 0$ and $x\in\R$. Then, similar arguments to those used in showing \eqref{phi--phin} (by applying Lemma \ref{basic-phi} (ii) to the wave $\phi(x-c(g_*\cdot s_*)t;g_*\cdot s_*)$) yield that
$$ \delta_0=\psi_{\infty}(0) \leq \lim_{t\to+\infty} \phi((c_{\infty}-c(g_*\cdot s_*))t+\xi_*;g_*\cdot s_*),$$
for some $\xi_*\in\R$. Since $\phi(+\infty;g_*\cdot s_*)=0$, we must have $c_{\infty}\leq c(g_*\cdot s_*)$.

In a similar way, one can prove that  $c_{\infty}\geq c(g_*\cdot s_*)$ by changing the normalization condition \eqref{normal-phi-new} into $\phi(z_n;g_n\cdot s_n)=1-\delta_0$ with $z_n\in\R$. Combining the above, we obtain that $c_{\infty}= c(g_*\cdot s_*)$. This implies that the whole sequence $(c(g_n\cdot s_n))_{n\in\N}$ converges to $c(g_*\cdot s_*)$ as $n\to\infty$.

Let us now turn to the proof of the convergence of $\phi(\cdot;g_n\cdot s_n)$ to $\phi(\cdot;g_*\cdot s_*)$ in $C^{2}_{loc}(\R)$ as $n\to \infty$. Remember that $\psi_{\infty}\in C^2(\R^2)$ is a decreasing solution of \eqref{eq-psi-infty}. We already know that $\psi_{\infty}(+\infty)=0$ and $\psi_{\infty}(-\infty)\in (\delta_0, 1]$. From the strong maximum principle applied to the equation satisfied by $\psi_{\infty}'$, one has $\psi'_{\infty}<0$ in $\R$. Moreover, since $c_{\infty}=c(g_*\cdot s_*)$, it follows from Lemma \ref{basic-phi} (i) and Remark \ref{rem-expon} that $\psi_{\infty}(\xi)$ and $\phi(\xi;g_*\cdot s_*)$ decay to $0$ as $\xi\to+\infty$ exponentially fast with the same decay rate. Namely, there holds
\begin{equation}\label{decay-phi*-psi}
	\phi(\xi;g_*\cdot s_*)\sim A_1 \me^{-\lambda_1(g_*\cdot s_*)\xi} \quad \hbox{and}\quad  \psi_{\infty}(\xi) \sim  \tilde{A}_1 \me^{-\lambda_1(g_*\cdot s_*)\xi} \quad \hbox{as }\, \xi\to+\infty,
\end{equation}
for some constants $A_1>0$ and $\tilde{A}_1>0$, where $\lambda_1(g_*\cdot s_*)>0$ is provided by \eqref{decay-rate}.

Next, we prove that $\psi_{\infty}(-\infty)=1$. Assume by contradiction that $\psi_{\infty}(-\infty)<1$. Then, since  $\phi(-\infty;g_*\cdot s_*)=1$ and since both $\psi_{\infty}(\xi)$ and $\phi(\xi;g_*\cdot s_*)$ are decreasing in $\xi\in\R$, it follows from \eqref {decay-phi*-psi} that $\psi_{\infty}(\xi) \leq \phi(\xi-l;g_*\cdot s_*)$ for all sufficiently large $l>0$. Then we can define
$$l_*:=\inf\{l\in\R: \psi_{\infty}(\xi) \leq \phi(\xi-l;g_*\cdot s_*) \,\hbox{ for all }\, \xi\in\R \}.$$
Clearly, $l_*$ is a real number, and $\psi_{\infty}(\cdot) \leq \phi(\cdot-l_*;g_*\cdot s_*)$ in $\R$. Furthermore, since  $\psi_{\infty}(-\infty)<\phi(-\infty;g_*\cdot s_*)$, it follows from the strong maximum principle for parabolic equations (recall that $\psi_{\infty}(x-c(g_*\cdot s_*)t)$ and $\phi(x-c(g_*\cdot s_*)t-l_*;g_*\cdot s_*)$ are entire solutions of \eqref{cauchy-g*}) that
$$\psi_{\infty}(\cdot) < \phi(\cdot-l_*;g_*\cdot s_*)\,\hbox{ in }\, \R.$$
Then, by \eqref{decay-phi*-psi} again, two possibilities may happen: (a) $\tilde{A}_1< A_1 \me^{\lambda_1(g_*\cdot s_*)l_*}$; (b) $\tilde{A}_1= A_1 \me^{\lambda_1(g_*\cdot s_*)l_*}$. If case (a) happens, then one can find some small $\epsilon>0$ such that $\psi_{\infty}(\cdot) \leq \phi(\cdot-l_*-\epsilon;g_*\cdot s_*)$ in $\R$, which is a contradiction with the definition of $l_*$.

Case (b) would lead to a contradiction as well. As a matter of fact, set $w(\xi):=\phi(\xi-l_*;g_*\cdot s_*)-\psi_{\infty}(\xi)$ for $\xi\in\R$. Clearly, $w(\xi)>0$ for $\xi\in\R$ and
$$w''(\xi)+c(g_*\cdot s_*)w'(\xi)+h(\xi)w(\xi)=0\, \hbox{ for }\,\xi\in\R,  $$
where $h\in L^{\infty}(\R)$ is given by
$$ h(\xi)=\frac{g_*(s_*,\phi(\xi-l_*;g_*\cdot s_*))-g_*(s_*,\psi_{\infty}(\xi))}{\phi(\xi-l_*;g_*\cdot s_*)-\psi_{\infty}(\xi)}. $$
If case (b) happens, then one has
\begin{equation}\label{expon-w}
w(\xi)=o(\me^{-\lambda_1(g_*\cdot s_*)\xi})\,\hbox{ as }\,\xi\to+\infty.
\end{equation}
On the other hand, since the function $\partial_u g_*(s_*,\cdot)$ is locally Lipschitz-continuous, it is easily seen from \eqref{decay-phi*-psi} that there exists some constant $A_2>0$ such that $|h(\xi)-\partial_ug_*(s_*,0)|\leq  A_2 \me^{-\lambda_1(g_*\cdot s_*)\xi}$ for all large $\xi>0$. Then, by the asymptotic stability theory for ordinary differential equations (see e.g., \cite[Chapter 13]{cl}), one can conclude that
$$w(\xi)\sim A_3 \me^{-\lambda_1(g_*\cdot s_*)\xi} \,\hbox{ as } \, \xi\to+\infty $$
for some constant $A_3>0$. This contradicts \eqref{expon-w}, and hence, both case (a) and case (b) cannot happen. Therefore,
we have  $\psi_{\infty}(-\infty)=1$.

As a consequence, $\psi_{\infty}(x-c(g_*\cdot s_*)t)$ is a traveling wave solution of \eqref{cauchy-g*} connecting $0$ and $1$. Then, by the uniqueness of such waves, one has $\psi_{\infty}(\cdot)\equiv \phi(\cdot+y_{\infty};g_*\cdot s_*)$, where $y_{\infty}$ is the unique point such that $\phi(y_{\infty};g_*\cdot s_*)=\delta_0$, and hence, from the above proof, one sees that the whole sequence $(\psi_{n}(\cdot))_{n\in\N}$ converges to $\phi(\cdot+y_{\infty};g_*\cdot s_*)$ in $C^2_{loc}(\R)$ as $n\to\infty$. By the normalization condition \eqref{normal-phi-g}, this implies that $y_n\to y_{\infty}$ and $\phi(\cdot;g_n\cdot s_n)\to \phi(\cdot;g_*\cdot s_*)$ in $C^2_{loc}(\R)$ as $n\to\infty$. The proof of Proposition \ref{continuity} is thus complete.
\end{proof}

By using the above continuity result, we show in the next proposition that, the wave profile $\phi(\xi;g\cdot s)$ approaches its limiting states as $\xi\to\pm\infty$ exponentially fast uniformly in $s\in\R$ and $g\in H(g)$.

\begin{pro}\label{uniform-decay}
	The following statements hold true:
	\begin{itemize}
		\item [{\rm (i)}] $\lim_{\xi\to+\infty} \phi(\xi;g\cdot s) =0$ and $\lim_{\xi\to -\infty} \phi(\xi;g\cdot s) =1$ uniformly in $s\in\R$ and $g\in H(f)$;
		
		\item [{\rm (ii)}] 	For any $\delta\in (0,1/2)$, there exists $\beta=\beta(\delta)>0$ $($independent of $s\in\R$ and  $g\in H(f)$$)$ such that
		\begin{equation*}
			\partial_{\xi} \phi(\xi;g\cdot s) \leq -\beta \,\hbox{ for all }\,\xi\in\R, \, s\in\R,\, g\in H(f)\, \hbox{ such that }\, \delta \leq \phi(\xi;g\cdot s) \leq 1-\delta;
		\end{equation*}
		
		 \item [{\rm (iii)}] There exist positive constants $C_1$, $C_2$ and $M$ $($all are independent of $s\in\R$ and $g\in H(f)$$)$ such that
		 \begin{equation}\label{psi-exp}
		 	\left\{\baa{ll}
		 	0< \phi(\xi;g\cdot s) \leq C_1 \me^{-\mu_1(g\cdot s) \xi}  & \hbox{for all  }\,\xi\geq M,\, s\in\R,\, g\in H(f), \vspace{5pt}\\
		 	0< 1-\phi(\xi; g\cdot s) \leq C_2 \me^{\mu_2 (g\cdot s) \xi}  & \hbox{for all  }\,\xi\leq -M,\, s\in\R,\, g\in H(f),
		 	\eaa\right.
		 \end{equation}
	  where $\mu_1(g\cdot t)$ and $\mu_2(g\cdot t)$ are positive constants given by
	  \begin{equation}\label{decay-rate-mu}
	 	\left\{\baa{l}
	 	\displaystyle   \mu_1(g\cdot s)=\frac{c(g\cdot s)+\sqrt{(c(g\cdot s))^2+4\gamma_0}}{2},\vspace{6pt}\\
	 	\displaystyle	\mu_2(g\cdot s)=\frac{-c(g\cdot s)+\sqrt{(c(g\cdot s))^2+4\gamma_0}}{2}.  \eaa\right.
	 \end{equation}
     with $\gamma_0>0$ provided by {\rm (A2)}.
	\end{itemize}	
\end{pro}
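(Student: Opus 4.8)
The plan is to reduce the whole proposition to a single uniformity statement about the ``width'' of the profiles, after which (i) and (iii) come from a constant--coefficient comparison and (ii) from a compactness argument. The key reduction is that the family of frozen nonlinearities $\{g(s,\cdot):g\in H(f),\ s\in\R\}$ is parametrized, up to the time translation carrying $g$ into $g\cdot s\in H(f)$, by a single point of the \emph{compact} hull $H(f)$; so it suffices to prove all three statements uniformly over $h\in H(f)$, with $\phi(\cdot;h)$ the (strictly decreasing) profile of the wave with nonlinearity $h(0,\cdot)$. Throughout I would use that the speeds $c(h)=c(g\cdot s)$ lie in a fixed bounded interval $[c_0^-,c_0^+]$ (Lemma~\ref{speed-bound}), that \eqref{gsn-delta} holds for every $h\in H(f)$, that $g,\partial_u g$ are uniformly bounded over $H(f)$ (Lemma~\ref{almost-partial}), and --- crucially --- that $h\mapsto\phi(\cdot;h)$ is continuous from $H(f)$ (compact--open topology) into $C^2_{loc}(\R)$, which is precisely Proposition~\ref{continuity} (applied with the time variable frozen at $0$).

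\emph{Step 1 (the main obstacle): a uniform width bound.} For each $h\in H(f)$ let $\mathcal M_+(h)$ and $\mathcal M_-(h)$ be the unique points at which $\phi(\cdot;h)$ takes the values $\delta_0/2$ and $1-\delta_0/2$; by the normalization \eqref{normal-phi-g} and strict monotonicity, $\mathcal M_-(h)<0<\mathcal M_+(h)$. I would show the maps $h\mapsto\mathcal M_\pm(h)$ are continuous on $H(f)$: if $h_n\to h$ in $H(f)$, then $h_n(0,\cdot)\to h(0,\cdot)$ uniformly on $[0,1]$, so $\phi(\cdot;h_n)\to\phi(\cdot;h)$ locally uniformly by Proposition~\ref{continuity}; since each $\phi(\cdot;h_n)$ is strictly decreasing and $\phi(\cdot;h)$ crosses the level $\delta_0/2$ transversally, the intermediate value theorem confines $\mathcal M_+(h_n)$ to any prescribed neighbourhood of $\mathcal M_+(h)$ for large $n$, and similarly for $\mathcal M_-$. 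Compactness of $H(f)$ then yields a constant $M>0$, independent of $h$, with $-M\le\mathcal M_-(h)<0<\mathcal M_+(h)\le M$. This is the one place compactness of the hull is used, and it is the heart of the argument: one must make the reduction $(g,s)\rightsquigarrow g\cdot s\in H(f)$ precise (via translation invariance of $H(f)$) and upgrade the sequential convergence supplied by Proposition~\ref{continuity} to genuine continuity of the level--set maps, which rests on strict monotonicity of each profile.

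\emph{Step 2 (tails: (iii), hence (i)).} Fix $h\in H(f)$ and set $c=c(h)$. For $\xi\ge\mathcal M_+(h)$ one has $0<\phi(\xi;h)\le\delta_0/2<\delta_0$, so \eqref{gsn-delta} gives $h(0,\phi(\xi;h))\le-\gamma_0\,\phi(\xi;h)$, and \eqref{tw-frozen} then shows that $\phi(\cdot;h)$ satisfies $\partial_{\xi\xi}\phi+c\,\partial_\xi\phi-\gamma_0\phi\ge 0$ on $[\mathcal M_+(h),\infty)$, i.e. it is a subsolution of the constant--coefficient operator $L_h:=\partial_{\xi\xi}+c\,\partial_\xi-\gamma_0$ there. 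I would compare it with $\bar\phi(\xi):=\tfrac{\delta_0}{2}\,\me^{-\mu_1(h)(\xi-\mathcal M_+(h))}$, where $\mu_1(h)$ is the positive root of $\mu^2-c\mu-\gamma_0=0$ from \eqref{decay-rate-mu}, so that $L_h\bar\phi=0$, $\bar\phi(\mathcal M_+(h))=\phi(\mathcal M_+(h);h)$ and both functions vanish at $+\infty$; since the zeroth--order coefficient $-\gamma_0$ of $L_h$ is negative, the maximum principle on the half--line gives $\phi(\xi;h)\le\bar\phi(\xi)$ for $\xi\ge\mathcal M_+(h)$. As $\mathcal M_+(h)\in(0,M]$ and $c$ ranges over a bounded set, $\mu_1(h)$ stays in a compact subinterval of $(0,\infty)$, whence $\phi(\xi;h)\le C_1\me^{-\mu_1(h)\xi}$ for all $\xi\ge M$ with $C_1$ independent of $h$. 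The mirror computation applied to $w:=1-\phi$ on $(-\infty,\mathcal M_-(h)]$ --- there $w\le\delta_0/2$, \eqref{gsn-delta} yields $h(0,1-w)\ge\gamma_0 w$, and one compares with $\tfrac{\delta_0}{2}\me^{\mu_2(h)(\xi-\mathcal M_-(h))}$ --- gives the second bound in \eqref{psi-exp} uniformly for $\xi\le-M$. This proves (iii), and (i) is immediate since $\mu_1(h),\mu_2(h)$ are bounded below by a positive constant independent of $h$.

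\emph{Step 3 ((ii)).} Fix $\delta\in(0,1/2)$. By (i) there is $R=R(\delta)>0$ such that $\phi(\xi;h)\notin[\delta,1-\delta]$ whenever $|\xi|>R$, for every $h\in H(f)$. If (ii) failed there would be sequences $h_n\in H(f)$ and $\xi_n\in\R$ with $\delta\le\phi(\xi_n;h_n)\le 1-\delta$ and $\partial_\xi\phi(\xi_n;h_n)\to 0$; the first condition forces $\xi_n\in[-R,R]$, so along a subsequence $\xi_n\to\xi_\infty$ and $h_n\to h_\infty$ in the compact set $H(f)$, whence $\phi(\cdot;h_n)\to\phi(\cdot;h_\infty)$ in $C^2_{loc}(\R)$ by Proposition~\ref{continuity}, and therefore $\partial_\xi\phi(\xi_\infty;h_\infty)=0$ --- contradicting the strict monotonicity of $\phi(\cdot;h_\infty)$. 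Hence (ii). In summary, once the uniform width bound $M$ of Step~1 is available, Steps~2 and~3 are routine: a one--dimensional constant--coefficient comparison and a compactness--contradiction argument, respectively.
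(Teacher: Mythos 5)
Your proposal is correct and follows essentially the same route as the paper: compactness of the hull $H(f)$ together with the $C^2_{loc}$-continuity from Proposition~\ref{continuity} to get the uniform width bound (the paper phrases this as a contradiction argument proving (i) directly, you phrase it as continuity of the level-set maps $h\mapsto\mathcal M_\pm(h)$), then a constant-coefficient maximum-principle comparison on the tails for (iii), and the same compactness-contradiction argument for (ii). The only difference is cosmetic: you derive (i) from (iii) rather than proving (i) first and then (iii) from it, but both directions rest on the identical width-bound fact.
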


\begin{proof}
The proof is similar to that of \cite[Proposition 4.1 (ii)-(iii)]{dhl} in the periodic case. For completeness, we include the details as follows.

As for statement (i), we only show the convergence of $\phi(\xi;g\cdot s)$ as $\xi\to+\infty$, since the proof of the other one is identical. Assume by contradiction that there exist $\epsilon_1\in(0,1)$ and a sequence~$(\xi_n,s_n,g_n)_{n\in\N}\subset \R\times\R\times H(f)$ such that $\xi_n\to+\infty$ as $n\to+\infty$ and
$$\phi(\xi_n;g_n\cdot s_n) \geq \epsilon_1 \,\hbox{ for each } \,n\in\N. $$
Since the set $H(f)$ is compact under the compact open topology (see Section 2.1), up to extraction of some subsequence, one can find some $s_*\in\R$ and $g_*\in H(f)$ such that \eqref{sequence-gn-sn} holds. By Proposition \ref{continuity},
we have $\phi(\xi;g_n\cdot s_n)\to \phi(\xi;g_*\cdot s_*)$ locally uniformly in $\xi\in\R$ as $n\to\infty$.
Let $\xi_{\infty}\in\R$ be the unique point such that $\phi(\xi_{\infty};g_*\cdot s_*)=\epsilon_1/2$. Since $\xi_n\to+\infty$ as $n\to+\infty$, we have $\xi_n>\xi_{\infty}$ for all large $n$, whence by the monotonicity of $\phi(\cdot;g_n\cdot s_n)$, there holds $\phi(\xi_{\infty};g_n\cdot s_n)>\phi(\xi_n;g_n\cdot s_n)\ge \epsilon_1$. Passing to the limit as $n\to+\infty$, we obtain $\phi(\xi_{\infty}; g_*\cdot s_*) \geq \epsilon_1$, which is impossible. Therefore, $\phi(\xi;g\cdot s) \to 0$ as $\xi\to+\infty$ uniformly in $s\in\R$ and $g\in H(f)$.

Next, we show statement (ii). Assume by contradiction that there exist $\tilde{\delta}\in (0,1/2)$ and  a sequence~$(\tilde{\xi}_n,\tilde{s}_n,\tilde{g}_n)_{n\in\N}\subset \R\times\R\times H(f)$ such that
$\tilde{\delta}\leq  \phi(\tilde{\xi}_n;\tilde{g}_n\cdot \tilde{s}_n) \leq 1-\tilde{\delta}$ for each $n\in\N$, and that \begin{equation}\label{partial-phin}
\partial_{\xi}\phi(\tilde{\xi}_n;\tilde{g}_n\cdot \tilde{s}_n) \to 0\,  \hbox{ as }\,n\to\infty.
\end{equation}
By (i), the sequence $(\tilde{\xi}_n)$ is bounded, and hence, up to extraction of some subsequence, $\tilde{\xi}_n\to \tilde{\xi}_* \in \R$ as $n\to\infty$. Furthermore, it follows from Proposition \ref{continuity} that, up to extraction of a further subsequence, $\partial_{\xi} \phi(\tilde{\xi}_n;\tilde{g}_n\cdot \tilde{s}_n) \to \partial_{\xi}\phi(\tilde{\xi}_*;\tilde{g}_*\cdot \tilde{s}_*)$ as $n\to\infty$ for some $\tilde{g}_*\in H(f)$ and $\tilde{s}_*\in\R$. Since $\phi(\cdot;\tilde{g}_*\cdot \tilde{s}_*)$ is decreasing, this contradicts with \eqref{partial-phin}. Therefore, statement (ii) is proved.

It remains to show statement (iii). Let $g\in H(f)$ and $s\in\R$ be arbitrary. By (i), there exists $M>0$ (independent of $g$ and $s$) such that
\begin{equation*}
	\left\{\baa{ll}
	0<\phi(\xi;g\cdot s)\leq \delta_0 & \hbox{for all } \,  \xi\geq M,\vspace{5pt}\\
	1-\delta_0 \leq  \phi(\xi;g\cdot s) <1 & \hbox{for all }\, \xi\leq -M,  \eaa\right.
\end{equation*}
where $\delta_0\in (0,1/2)$ is the constant provided by (A2). Then, by the first line of \eqref{gsn-delta},  we have
$$\partial_{\xi\xi}\phi(\xi;g\cdot s) +c(g\cdot s) \partial_{\xi}\phi(\xi;g\cdot s)-\gamma_0 \phi(\xi;g\cdot s)\geq 0  \,\hbox{ for all }\, \xi\geq M. $$
On the other hand, thanks to Lemma \ref{speed-bound}, one can choose a real number $\mu_1^* = \sup_{g\in H(f),\,s\in\R}\mu_1(g\cdot s)>0$.
Then, we define $v_1(\xi)=C_1\me^{-\mu_1(g\cdot s)\xi}$ with $C_1=\delta_0 \me^{\mu_1^* M}$. It is straightforward to check that for all $s\in\R$ and $g\in H(f)$, there holds $\phi(M;g\cdot s)\leq v_1(M)$ and
$$v_1''(\xi)+c(g\cdot s) v_1'(\xi)-\gamma_0 v_1(\xi)\leq 0\, \hbox{ for all } \, \xi\geq M.$$
Consequently, the first line of \eqref{psi-exp} follows directly from the elliptic weak maximum principle. Similarly, by comparing $1-\phi(\xi;g\cdot s)$ and $v_2(\xi):=C_2 \me^{\mu_2\xi}$ over the interval $\xi\in (-\infty,-M]$, where $C_2=\delta_0 \me^{-\mu_2^* M}$ with $\mu_2^* = \sup_{g\in H(f),\,s\in\R}\mu_2(g\cdot s)>0$,
one can prove the second line of \eqref{psi-exp}.
\end{proof}

In the remaining part of this subsection, we fix $g\in H(f)$, and show the almost periodicity, as well as the $C^1$- smoothness, of the homogeneous wave $(\phi(\cdot;g\cdot s),c(g\cdot s))$ with respect to $s$. Without loss of generality, we prove these results for the case $g=f$. Clearly, $(\phi(\xi;f\cdot s), c(f\cdot s))$ satisfies
\begin{equation}\label{eq-fs}\left\{\baa{l}
	\partial_{\xi\xi} \phi(\xi;f\cdot s)+c(f\cdot s)\partial_{\xi} \phi(\xi;f\cdot s)+f(s, \phi(\xi;f\cdot s))=0
	\quad\hbox{for }\,\,\xi\in\R, \vspace{5pt}\\
	\phi(-\infty;f\cdot s)=1,\quad \phi(+\infty;f\cdot s)=0. \eaa\right.
\end{equation}

\begin{pro}\label{uniform-almost}
$c(f\cdot s)$ is almost periodic in $s$, and the functions $s\mapsto \phi(\xi;f\cdot s)$, $s\mapsto \partial_{\xi}\phi(\xi;f\cdot s)$ and $s\mapsto \partial_{\xi\xi}\phi(\xi;f\cdot s)$ are almost periodic uniformly with respect to $\xi\in\R$.
\end{pro}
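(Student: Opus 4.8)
\noindent\emph{Plan of proof.}
The idea is to realize $c(f\cdot s)$ and $\phi(\cdot;f\cdot s)$ (together with its first two $\xi$-derivatives) as the composition of the map $s\mapsto f\cdot s\in H(f)$ with a continuous map defined on the compact metric space $H(f)$, and then to use the elementary fact that composing an almost periodic function valued in a compact metric space with a continuous map again produces an almost periodic function. Here $f\cdot s$ denotes the time-translate $f(\cdot+s,\cdot)\in H(f)$, and since $(f\cdot s)(0,\cdot)=f(s,\cdot)$, the quantities $c(f\cdot s)$ and $\phi(\cdot;f\cdot s)$ are exactly the speed and normalized profile of the frozen homogeneous wave attached to the nonlinearity $(f\cdot s)(0,\cdot)$.

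\emph{Step 1 (from $C^2_{\mathrm{loc}}$ to uniform convergence).}
The first thing I would do is upgrade Proposition~\ref{uniform-decay} so that the uniform exponential tails also hold for the derivatives of the profile. Differentiating \eqref{tw-frozen}, the function $w:=\partial_\xi\phi(\cdot;g\cdot s)$ solves $w''+c(g\cdot s)w'+\partial_u g(s,\phi(\cdot;g\cdot s))\,w=0$; beyond a threshold that can be chosen independently of $g\in H(f)$ and $s\in\R$ (Proposition~\ref{uniform-decay}(i)) the zero-order coefficient is bounded above by a negative constant, uniformly in $g$ and $s$, because $\phi(\cdot;g\cdot s)$ is uniformly small there while $\partial_u g$, $\partial_{uu}g$ are bounded uniformly over $H(f)$ by Lemma~\ref{almost-partial}. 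Combined with a uniform $C^{2,\alpha}$ bound for $\phi(\cdot;g\cdot s)$ from interior elliptic estimates (using also Lemma~\ref{speed-bound}), a comparison with an exponential barrier, whose rate may be taken uniform by Lemma~\ref{speed-bound}, gives $\partial_\xi\phi(\xi;g\cdot s)\to0$ as $\xi\to+\infty$ uniformly in $g$ and $s$; the identity $\partial_{\xi\xi}\phi=-c(g\cdot s)\partial_\xi\phi-g(s,\phi)$ then carries the same uniform decay over to $\partial_{\xi\xi}\phi$, and the situation near $-\infty$ is handled symmetrically with $\phi$ replaced by $1-\phi$. It follows that the map $\Pi:g\mapsto\bigl(c(g\cdot 0),\phi(\cdot;g\cdot 0),\partial_\xi\phi(\cdot;g\cdot 0),\partial_{\xi\xi}\phi(\cdot;g\cdot 0)\bigr)$ from $H(f)$ into $\R\times\bigl(BUC(\R)\bigr)^3$, where $BUC(\R)$ is the space of bounded uniformly continuous functions on $\R$ with the sup norm, is well defined and continuous: if $g_n\to g_*$ in $H(f)$ then $g_n(0,\cdot)\to g_*(0,\cdot)$ uniformly on $[0,1]$, so Proposition~\ref{continuity} gives convergence in $C^2_{\mathrm{loc}}(\R)$, while on $\{|\xi|\ge M_\epsilon\}$ both profiles lie within $\epsilon$ of $0$ or of $1$ and their first two derivatives within $\epsilon$ of $0$, which promotes the convergence to the sup norm on all of $\R$. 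Since $H(f)$ is compact, $\Pi$ is uniformly continuous.

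\emph{Step 2 (composition).}
Given a sequence $(s_n')\subset\R$, the almost periodicity of $f$ (Lemma~\ref{eqdf-almost}(ii)) yields a subsequence $(s_n)$ and $g\in H(f)$ with $f(t+s_n,u)\to g(t,u)$ uniformly for $t\in\R$ and $u$ in bounded sets. Because $\sup_{t\in\R}|f(t+s+s_n,u)-g(t+s,u)|$ is independent of $s$ (replace $t$ by $t+s$), and because on the hull of an almost periodic function the compact–open topology coincides with uniform convergence on $\R$, we obtain $f\cdot(s+s_n)\to g\cdot s$ in $H(f)$ uniformly in $s\in\R$. Applying the uniform continuity of $\Pi$, it follows that $c(f\cdot(s+s_n))\to c(g\cdot s)$ uniformly in $s\in\R$, and that $\phi(\xi;f\cdot(s+s_n))$, $\partial_\xi\phi(\xi;f\cdot(s+s_n))$ and $\partial_{\xi\xi}\phi(\xi;f\cdot(s+s_n))$ converge to $\phi(\xi;g\cdot s)$, $\partial_\xi\phi(\xi;g\cdot s)$ and $\partial_{\xi\xi}\phi(\xi;g\cdot s)$ respectively, uniformly in $s\in\R$ and $\xi\in\R$ (convergence in $BUC(\R)$ uniformly in $s$ being the same as uniform convergence in the pair $(s,\xi)$). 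By Lemma~\ref{eqdf-almost}(ii) this is precisely the statement of the proposition.

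\emph{Main obstacle.}
The substantive work sits in Step~1: Proposition~\ref{uniform-decay} delivers the uniform exponential tails of $\phi$ itself but not of $\partial_\xi\phi$ and $\partial_{\xi\xi}\phi$, and it is exactly these derivative tails, together with the automatic coincidence of the compact–open and uniform topologies on $H(f)$, that turn the locally uniform continuity of Proposition~\ref{continuity} into continuity into $\bigl(BUC(\R)\bigr)^3$. Once that is in place, Step~2 is a routine application of the sequential characterization of almost periodicity.
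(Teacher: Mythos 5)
Your proof is correct and rests on the same two pillars as the paper's: the locally uniform continuity of $(\phi,c)$ over $H(f)\times\R$ from Proposition~\ref{continuity}, and the uniform-in-$(g,s)$ tails from Proposition~\ref{uniform-decay}. The main organizational difference is as follows. The paper works directly with the Bochner double-limit criterion (Lemma~\ref{eqdf-almost}(i)): it first gets almost periodicity of $c(f\cdot s)$ and of $\phi(\xi;f\cdot s)$ locally uniformly in $\xi$ straight from Proposition~\ref{continuity}, upgrades to uniformity over $\xi\in\R$ using the uniform tails of $\phi$ alone (Proposition~\ref{uniform-decay}(i)), and then handles $\partial_\xi\phi$ and $\partial_{\xi\xi}\phi$ by the difference-quotient argument of Lemma~\ref{almost-partial}, using uniform boundedness of $\partial_{\xi\xi\xi}\phi$ from elliptic estimates. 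You instead establish uniform exponential tails for $\partial_\xi\phi$ and $\partial_{\xi\xi}\phi$ first (essentially the content of the paper's Remark~\ref{rem-bound-l2}, which the paper records only \emph{after} the proposition), then assemble the map $\Pi\colon H(f)\to\R\times\bigl(BUC(\R)\bigr)^3$ and apply the composition principle for almost periodic $H(f)$-valued flows together with automatic uniform continuity of $\Pi$ on the compact $H(f)$. Your route is cleaner as a ``general principle'' statement and makes the role of compactness of $H(f)$ explicit; the paper's avoids the $BUC$-valued framework and trades the derivative tail estimates for a difference-quotient trick. One small remark: your claim that the compact-open and uniform topologies agree on $H(f)$, which you invoke in Step~2, is indeed correct (continuous bijection from a compact space to a Hausdorff space is a homeomorphism, applied to the identity from $(H(f),\text{uniform})$ to $(H(f),\text{compact-open})$), and it is worth flagging that this is precisely where Bochner's characterization of almost periodicity enters.
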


\begin{proof}
Since the function $s\mapsto f(s,u)$ is almost periodic uniformly in $u\in [0,1]$, by Lemma \ref{eqdf-almost} (i),
for any sequences $(s'_n)\subset \R$, $(t'_n)\subset \R$,
there exist subsequences  $(s_n)\subset (s'_n)$, $(t_n)\subset (t'_n)$ such that for each $s\in\R$ and $u\in [0,1]$,
$$\lim_{m\to\infty} \lim_{n\to\infty} f(s+t_n+s_m,u) = \lim_{n\to\infty} f(s+t_n+s_n,u).$$
It further follows from  Lemma \ref{eqdf-almost} (ii) that, the above convergences indeed hold uniformly in $s\in\R$, $u\in [0,1]$. This particularly implies that for each $s\in\R$,
$$f(s+t_{n}+s_{n},u) \to f_*(s+s_*,u) \,\hbox{ as }\, n\to\infty \,\hbox{ uniformly in }\,u\in [0,1], $$
and
$$f(s+t_{n}+s_{m},u) \to f_*(s+s_*,u)\,\hbox{ as }\, n,m\to\infty \,\hbox{ uniformly in }\,u\in [0,1],$$
for some $f_*\in H(f)$ and $s_*\in\R$.
It then follows from Proposition \ref{continuity} that for each $s\in\R$,
$$\lim_{n\to\infty} c(f\cdot (s+t_{n}+s_{n}))=\lim_{m\to\infty}\lim_{n\to\infty} c(f\cdot (s+t_{n}+s_{m}))= c(f_*\cdot (s+s_*)),$$
and
$$\lim_{n\to\infty} \phi(\xi;f\cdot (s+t_{n}+s_{n}))=\lim_{m\to\infty}\lim_{n\to\infty} \phi(\xi;f\cdot (s+t_{n}+s_{m}))= \phi(\xi;f_*\cdot (s+s_*))$$
locally uniformly in $\xi\in\R$. By Lemma \ref{eqdf-almost} (i)-(ii), this implies that  $c(f\cdot s)$ is almost periodic in $s$, and $\phi(\xi;f\cdot s)$ is almost periodic in $s$ uniformly for $\xi$ in bounded sets. Furthermore,
since $\phi(\xi;f\cdot s)$ approaches its limiting states as $\xi\to\pm\infty$ uniformly in $s\in\R$ (see Proposition \ref{uniform-decay} (i)), by using Lemma \ref{eqdf-almost} (ii) again, one obtains that  $\phi(\xi;f\cdot s)$ is almost periodic in $s$ uniformly with respect to $\xi\in \R$.

Finally, thanks to Lemma \ref{speed-bound}, applying standard elliptic estimates to equation \eqref{eq-fs}, it follows that $\partial_{\xi}\phi(\xi;f\cdot s)$, $\partial_{\xi\xi}\phi(\xi;f\cdot s)$ and $\partial_{\xi\xi\xi}\phi(\xi;f\cdot s)$ are bounded for $\xi\in\R$ and $s\in\R$. Since $\phi(\xi;f\cdot s)$ is almost periodic in $s$ uniformly for $\xi\in \R$, similarly to the proof of
Lemma \ref{almost-partial}, one can conclude that $\partial_{\xi}\phi(\xi;f\cdot s)$ and $\partial_{\xi\xi}\phi(\xi;f\cdot s)$ are almost periodic in $s$ uniformly with respect to $\xi\in \R$. This ends the proof of Proposition \ref{uniform-almost}.
\end{proof}

\begin{rem}\label{rem-bound-l2}
By {\rm Proposition \ref{uniform-decay} (iii)} and standard elliptic estimates applied to \eqref{eq-fs}, one has
\begin{equation}\label{psi-exp-s}
	\left\{\baa{ll}
|\partial_{\xi}\phi(\xi;f\cdot s)|+|\partial_{\xi\xi}\phi(\xi;f\cdot s)| \leq  \tilde{C}_1 \me^{-\mu_1(f\cdot s) \xi}  & \hbox{for all }\,\xi\geq M,\,s\in\R, \vspace{5pt}\\
|\partial_{\xi}\phi(\xi;f\cdot s)|+|\partial_{\xi\xi}\phi(\xi;f\cdot s)| \leq \tilde{C}_2 \me^{\mu_2(f\cdot s) \xi}  & \hbox{for all }\,\xi\leq -M,\,s\in\R,
	\eaa\right.
\end{equation}
where $M>0$, $\tilde{C}_1>0$ and $\tilde{C}_2$ are positive constants independent of $s\in\R$, and $\mu_1(f\cdot s)$, $\mu_2(f\cdot s)$ are defined as in \eqref{decay-rate-mu}. Since $\mu_1(f\cdot s)$ and $\mu_2(f\cdot s)$ are bounded from below by positive constants independent of $s\in\R$ due to {\rm Lemma \ref{speed-bound}}, it then follows that there exists $\tilde{C}_3>0$ (independent of $s\in\R$) such that
\begin{equation}\label{phi-bound-1}
	\|1-\phi(\cdot;f\cdot s)\|_{L^2((-\infty,0])}+\|\phi(\cdot;f\cdot s)\|_{L^2([0,\infty))}+\|\partial_\xi \phi(\cdot;f\cdot s) \|_{H^2(\R)} \leq \tilde{C}_3,
\end{equation}
for all $s\in\R$. Furthermore, by \eqref{psi-exp-s}, {\rm Proposition \ref{uniform-decay} (ii)} and {\rm Lemma  \ref{speed-bound}} again, one can find some constants $\tilde{C}_4>0$ and $\tilde{C}_5>0$ (both are independent of $s\in\R$) such that
\begin{equation}\label{esti-phi*}
\tilde{C}_4 \leq 	\int_{\R} \me^{2c(f\cdot s) \xi } | \partial_{\xi}\phi(\xi;f\cdot s) |^2d\xi + \int_{\R} 	\me^{c(f\cdot s) \xi}  |\partial_{\xi}\phi(\xi;f\cdot s)|^2 d\xi \leq \tilde{C}_5
\end{equation}
for all $s\in\R$.
\end{rem}

Our last result of this subsection is stated as follows.

\begin{pro}\label{smooth-s}
The function $s\mapsto c(f\cdot s)$ is of class $C^1(\R)$ and $\sup_{s\in\R} |c'(f\cdot s)|<+\infty$.
The function $(\xi,s) \mapsto \phi(\xi;f\cdot s)$ is of class $C^{2;1}_{\xi;s}(\R^2)$, and satisfies
\begin{equation}\label{esti-p-phi}
		\sup_{\xi\in\R,\,s\in\R} \left| \partial_s \phi(\xi;f\cdot s)\right| <+\infty.
\end{equation}
\end{pro}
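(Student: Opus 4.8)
The plan is to obtain the $C^1$ dependence of $(c(f\cdot s),\phi(\cdot;f\cdot s))$ on $s$ from the implicit function theorem applied to the traveling wave problem \eqref{eq-fs}, and then to upgrade local differentiability to the uniform bounds of the statement by inserting the $s$-independent estimates of Lemma \ref{speed-bound}, Proposition \ref{uniform-decay} and Remark \ref{rem-bound-l2}. First I would recast \eqref{eq-fs} as a zero-finding problem in a fixed Banach space. Fix $\chi\in C^\infty(\R)$ with $\chi\equiv 1$ on $(-\infty,-1]$, $\chi\equiv 0$ on $[1,+\infty)$ and $\chi(0)=0$; since $\phi(\cdot;f\cdot s)$, $\partial_\xi\phi(\cdot;f\cdot s)$, $\partial_{\xi\xi}\phi(\cdot;f\cdot s)$ converge exponentially to their limits as $\xi\to\pm\infty$ uniformly in $s$ (Remark \ref{rem-bound-l2}), we have $\phi(\cdot;f\cdot s)-\chi\in H^2(\R)$. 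Define, for $\psi\in H^2(\R)$ and $c,s\in\R$,
$$\mathcal{G}(\psi,c,s):=\Bigl((\chi+\psi)''+c(\chi+\psi)'+f(s,\chi+\psi),\ \psi(0)-\tfrac12\Bigr)\in L^2(\R)\times\R,$$
so that $\mathcal{G}(\phi(\cdot;f\cdot s)-\chi,\,c(f\cdot s),\,s)=0$ for every $s$, by \eqref{eq-fs} and \eqref{normal-phi-g}. Since $f\in C^2(\R^2)$ and $f(s,0)=f(s,1)=0$ for all $s$, the substitution $\psi\mapsto f(s,\chi+\psi)$ maps $H^2(\R)$ into $L^2(\R)$ (write $f(s,\chi+\psi)-f(s,\chi)=\psi\int_0^1\partial_uf(s,\chi+t\psi)\,dt$, use $H^2(\R)\hookrightarrow L^\infty(\R)$ and that $f(s,\chi)$ has compact support), and on any compact interval of values of $s$ it is jointly $C^1$ in $(\psi,s)$; hence $\mathcal{G}$ is $C^1$ there.

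The crucial point is that, at a solution, the partial differential
$$D_{(\psi,c)}\mathcal{G}(\psi_s,c_s,s)\colon H^2(\R)\times\R\to L^2(\R)\times\R,\qquad (w,d)\mapsto\bigl(L_sw+d\,\partial_\xi\phi,\ w(0)\bigr),$$
is an isomorphism, where $\phi=\phi(\cdot;f\cdot s)$, $c_s=c(f\cdot s)$ and $L_sw:=w''+c_sw'+\partial_uf(s,\phi)\,w$. Because $\partial_uf(s,0)\le-\gamma_0$ and $\partial_uf(s,1)\le-\gamma_0$ by (A2), the limiting constant-coefficient operators of $L_s$ at $\pm\infty$ are hyperbolic (one growing and one decaying mode at each end), so the point $0$ lies outside the essential spectrum of $L_s$ and $L_s:H^2(\R)\to L^2(\R)$ is Fredholm of index $0$; any $H^2$ solution of $L_sw=0$ decays at both ends, which forces the kernel to be one-dimensional and hence equal to $\R\,\partial_\xi\phi$ (note $\partial_\xi\phi\in H^2(\R)$ by Remark \ref{rem-bound-l2}), while the cokernel is spanned by $\psi^\ast:=\me^{c_s\xi}\partial_\xi\phi\in L^2(\R)$ (indeed $L_s^\ast(\me^{c_s\xi}v)=\me^{c_s\xi}L_sv$, and $\me^{c_s\xi}\partial_\xi\phi\in L^2$ by the exponential tails in Remark \ref{rem-bound-l2}). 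Injectivity and surjectivity of $D_{(\psi,c)}\mathcal{G}$ then follow by the standard projection argument from the two nondegeneracies $\langle\partial_\xi\phi,\psi^\ast\rangle=\int_\R\me^{c_s\xi}(\partial_\xi\phi)^2\,d\xi\in(0,+\infty)$, with uniform-in-$s$ two-sided control (compare \eqref{esti-phi*}), and $\partial_\xi\phi(0)<0$ (Proposition \ref{uniform-decay}(ii), since $\phi(0)=1/2$): solve for $d$ so that $h-d\,\partial_\xi\phi\perp\psi^\ast$, invert $L_s$ on its range, then adjust the free multiple of $\partial_\xi\phi$ to match the value at $\xi=0$.

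Applying the implicit function theorem near each $(\psi_{s_0},c_{s_0},s_0)$, and using the uniqueness of the traveling wave together with the $H^2(\R)$-continuity of $s\mapsto\phi(\cdot;f\cdot s)$ (Proposition \ref{continuity} and the uniform tails), I obtain that $s\mapsto(c(f\cdot s),\phi(\cdot;f\cdot s))$ is of class $C^1$ from $\R$ into $\R\times H^2(\R)$; differentiating \eqref{eq-fs} and bootstrapping with elliptic estimates gives $\phi\in C^{2;1}_{\xi;s}(\R^2)$, with $P_s:=\partial_s\phi(\cdot;f\cdot s)\in H^2(\R)$ and $d(s):=c'(f\cdot s)$ solving the linearized problem $L_sP_s+d(s)\,\partial_\xi\phi+\partial_tf(s,\phi)=0$, $P_s(0)=0$, where $\partial_tf$ denotes the partial derivative of $f$ in its first argument.

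For the uniform bounds I would use that all constants entering the isomorphism argument are controlled uniformly in $s$: the two-sided bound on $\int_\R\me^{c_s\xi}(\partial_\xi\phi)^2$ from \eqref{esti-phi*}, the lower bound $|\partial_\xi\phi(0)|\ge\beta(1/4)>0$ from Proposition \ref{uniform-decay}(ii), the uniform exponential decay of $\phi,1-\phi,\partial_\xi\phi,\partial_{\xi\xi}\phi$ from Proposition \ref{uniform-decay}(iii) and Remark \ref{rem-bound-l2}, and the uniform bound on $c_s$ from Lemma \ref{speed-bound}; these give a bound on the operator norm of $D_{(\psi,c)}\mathcal{G}^{-1}$ that does not depend on $s$. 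Applying this inverse to the linearized problem for $(P_s,d(s))$, and using that $\|\partial_\xi\phi\|_{L^2(\R)}$ is uniformly bounded (Remark \ref{rem-bound-l2}) and that $\xi\mapsto\partial_tf(s,\phi(\xi;f\cdot s))$ has a uniformly bounded $L^2$-norm — here one exploits $\partial_tf(s,0)=\partial_tf(s,1)=0$ together with the uniform exponential tails of $\phi$ and $1-\phi$ — yields $\sup_s|c'(f\cdot s)|<+\infty$ and $\sup_s\|\partial_s\phi(\cdot;f\cdot s)\|_{H^2(\R)}<+\infty$, whence \eqref{esti-p-phi} follows from $H^2(\R)\hookrightarrow L^\infty(\R)$. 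The main obstacle I anticipate is precisely this last uniformity step: showing that the partial inverse of $L_s$ on its range is bounded uniformly in $s$ (equivalently, a uniform spectral gap below the simple, sign-definite ground state $\me^{c_s\xi/2}\partial_\xi\phi$ of the self-adjoint operator $\me^{c_s\xi/2}L_s\me^{-c_s\xi/2}$), and controlling $\partial_tf(s,\phi)$ in $L^2$ uniformly in $s$ from the structural and almost-periodicity hypotheses on $f$.
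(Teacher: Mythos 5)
Your proposal is correct and lands at the same conclusion, but it follows a genuinely different route from the paper's argument. The paper also uses the implicit function theorem, but sets the problem up differently: it fixes a base point $s_0$, considers $D(s)=(\phi(\cdot;f\cdot s)-\phi(\cdot;f\cdot s_0),\ c(f\cdot s))$, and writes the zero set as $G(v,c,s)=\bigl(v+M_{c,s}^{-1}(K(v,c,s)),\ v(0)\bigr)=0$, where $M_{c,s}(v)=v''+cv'-\beta v$ is a \emph{preconditioner}: since $M_{c,s}^{-1}:L^2(\R)\to H^2(\R)$, the map $G_1$ takes values in $H^2(\R)$ and its linearization in $v$ is an identity-plus-compact operator, so its invertibility and a uniform bound on its inverse are taken from \cite[Lemmas 5.3--5.4]{dhl}. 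Your formulation instead maps directly into $L^2(\R)\times\R$ with a \emph{fixed} cutoff $\chi$ as reference (rather than $\phi(\cdot;f\cdot s_0)$), and checks invertibility of the linearization $(w,d)\mapsto(L_sw+d\,\partial_\xi\phi,\,w(0))$ by hand via Fredholm theory, exponential dichotomy at $\pm\infty$, and the explicit adjoint solution $\me^{c(f\cdot s)\xi}\partial_\xi\phi$. Both roads are sound. What the paper's preconditioning buys is that the quantitative invertibility is imported from \cite{dhl} without a fresh spectral analysis; what your direct approach buys is self-containedness and transparency about where the $s$-uniformity comes from (the two-sided control of $\int_\R\me^{c(f\cdot s)\xi}(\partial_\xi\phi)^2$, the uniform lower bound $|\partial_\xi\phi(0)|\geq\beta$, the uniform exponential tails, and the uniform bound on $c(f\cdot s)$), at the cost of the extra step you yourself flag, namely proving the uniform spectral gap or, equivalently, a uniform bound on the partial inverse of $L_s$ on the orthogonal complement of its cokernel — this can be obtained by a compactness argument in $H(f)\times\R$ using Proposition \ref{continuity}, exactly the device the paper leans on indirectly through \cite[Lemma 5.4]{dhl}. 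Two small bookkeeping remarks: your requirement $\chi(0)=0$ together with $\chi\equiv1$ on $(-\infty,-1]$ forces $\chi$ to decrease to $0$ already on $[-1,0]$; taking instead the normalization $\psi(0)=1/2-\chi(0)$ removes that constraint with no loss. Also note that, like the paper, you need $\partial_tf(s,u)$ and $\partial_{tu}f(s,u)$ to be bounded for $u\in[0,1]$ uniformly in $s$ (to control $\|\partial_tf(s,\phi(\cdot;f\cdot s))\|_{L^2}$ uniformly via $\partial_tf(s,0)=\partial_tf(s,1)=0$); this is used implicitly in the paper's modification of $f$ outside $[0,1]$ as well and appears to be a standing tacit assumption rather than a gap specific to your argument.
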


\begin{proof}
Based on the above preparations, this proposition can be proved by using the implicit function theorem, where the arguments are essentially the same as those of \cite[Proposition 4.1 (iv)]{dhl} for the periodic case.
Therefore, we only outline the proof here and omit the details.

First of all, for mathematical convenience, we modify $f$ in $\R\times (\R\setminus [0,1])$ as follows:
\begin{equation*}%\label{extend-f}
	\left\{\baa{ll}
	f(t,u)=\partial_u f(t,0)u & \hbox{for all }(t,u)\in\R\times (-\infty,0),\vspace{5pt}\\
	f(t,u)=\partial_u f(t,1)(u-1) & \hbox{for all }(t,u)\in\R\times (1,\infty).\eaa\right.
\end{equation*}
Clearly, this modification does not affect the properties of $(\phi(\xi;f\cdot s), c(f\cdot s))$, as $\phi(\cdot;f\cdot s)$ ranges in $(0,1)$. It is also easily seen that with this modification, $f$ is of class $C^1$ in $\R^2$ ($f$ may be no longer of class $C^2$), and $f(t,u)$, $\partial_t f(t,u)$ $\partial_uf(t,u)$ are globally Lipschitz-continuous in $u\in\R$ uniformly in $t\in\R$.
This together with \eqref{phi-bound-1} and the assumption that $f(t,0)=f(t,1)=0$ for all $t\in\R$ implies that
for all $t\in\R$, $s\in\R$
\begin{equation}\label{f-bound-p}
\| f(t,\phi(\cdot;f\cdot s)) \|_{L^{2}(\R)} +\| \partial_t f(t,\phi(\cdot;f\cdot s)) \|_{L^{2}(\R)}
+ \| \partial_u f(t, \phi(\cdot;f\cdot s)) \|_{L^{2}(\R)} \leq C_1 ,
\end{equation}
where $C_1>0$ is a constant independent of $t$ and $s$.

Remember that $c(f\cdot s)$ is bounded in $s\in\R$ by Lemma \ref{speed-bound}. In the sequel, we use $c_0>0$ to denote a bound, that is, $|c(f\cdot s)| < c_0$ for all $s\in\R$.  Next, we fix a real number $\beta>0$, and for any $c\in \R$ and $s\in\R$, we define a liner operator:
$$M_{c,s}(v)=v''+cv'-\beta v \, \hbox{ for }\, v\in H^2(\R). $$
It follows from \cite[Lemma 5.3]{dhl} that the operator $M_{c,s}: H^2(\R) \to L^2(\R)$ is invertible, and that
\begin{equation}\label{inver-M}
\|M_{c,s}^{-1}(g)\|_{H^2(\R)} \leq C_2 \|g\|_{L^2(\R)}\,\hbox{ for }\, g\in L^2(\R),
\end{equation}
where $C_2>0$ is independent of $s\in\R$ and $c \in [-c_0,c_0]$.
Furthermore,  for any sequences  $(g_n)_{n\in\N} \subset L^2(\R)$, $(c_n)_{n\in\N}\subset \R$, $(s_n)_{n\in\R}\subset \R$ such that $\|g_n- g\|_{L^2(\R)} \to 0$, $c_n\to c$ and $s_n\to s$ as $n\to+\infty$, there holds
\begin{equation}\label{M-contin}
M_{c_n,s_n}^{-1}(g_n) \to  M_{c,s}^{-1}(g)  \, \hbox{ in }\, H^2(\R)\, \hbox{ as }\, n\to\infty,
\end{equation}
and the above convergence holds uniformly with respect to $s\in\R$, $c\in [-c_0,c_0]$ and $g$ in bounded sets of $H^2(\R)$.

In addition to $\beta>0$, we also choose an arbitrary $s_0\in\R$. It is easily seen from \eqref{phi-bound-1} that for any $s\in\R$, $\phi(\cdot;f\cdot s)-\phi(\cdot;f\cdot s_0) \in H^2(\R)$. We will show below that the function $D:\R\to H^2(\R)\times \R$ defined by
$$D(s)=(\phi(\cdot;f\cdot s)-\phi(\cdot;f\cdot s_0),c(f\cdot s)) \,\hbox{ for }\,s\in\R, $$
is continuously Fr\'echet differential at $s=s_0$. To do so, for any $(v,c,s)\in L^2(\R)\times \R\times \R$, we define $G(v,c,s)=(G_1,G_2)(v,c,s)$ with
$$G_1(v,c,s)=v+M_{c,s}^{-1}(K(v,c,s))\quad\hbox{and}\quad G_2(v,c,s)=v(0),$$
where
$$K(v,c,s):\xi\mapsto K(v,c,y)(\xi):=\partial_{\xi\xi} \phi(\xi;f\cdot s_0)+c\partial_{\xi}\phi(\xi;f\cdot s_0)+\beta v(\xi)+ f(s,v(\xi)+\phi(\xi;f\cdot s_0)).$$
It is clear that
$$G(D(s),s)=G(\phi(\cdot;f\cdot s)-\phi(\cdot;f\cdot s_0),c(f\cdot s),s)=(0,0)\,\hbox{ for all }\, s\in\R.$$
Moreover, by using \eqref{f-bound-p}, \eqref{inver-M} and \eqref{M-contin}, one can check that the function $G$ maps $H^2(\R)\times\R\times\R$ into $H^2(\R)\times\R$, and it is continuously Fr\'echet differentiable. In particular, there holds
\begin{equation}\label{fre-partial-s}
	\left\{\baa{l}
	\partial_{s}G_1(v,c,s)(\tilde{s})=-\tilde{s}\,M_{c,s}^{-1}\Big\{ \partial_{\xi\xi} \big[M_{c,s}^{-1}(K(v,c,s))-\phi(\cdot;f\cdot s_0)\big]-\partial_tf(s,v+\phi(\cdot;f\cdot s_0))\Big\},\vspace{5pt}\\
\partial_{s}G_2(v,c,s)(\tilde{s})=0,  \eaa\right.
\end{equation}
for all $\tilde{s}\in\R$.
Furthermore, with some obvious modifications of the proof of \cite[Lemma 5.4]{dhl} (the estimates in \eqref{esti-phi*} are useful in this step), one can prove that at $s=s_0$, the operator $Q_{s_0}= \partial_{(v,c)}G(0,c(f\cdot s_0),s_0): H^2(\R)\times \R \to H^2(\R)\times\R$ is invertible, and there exists a constant $C_3>0$ independent of $s_0$ such that
\begin{equation}\label{esti-inverse}
	\|Q_{s_0}^{-1}(\tilde{g},\tilde{d})\|_{H^2(\R)\times\R} \leq C_3 \|(\tilde{g},\tilde{d}) \|_{H^2(\R)\times\R} \,\hbox{ for all }\, (\tilde{g},\tilde{d})\in H^2(\R)\times\R,
\end{equation}
where the space $H^2(\R)\times\R$ is endowed with norm $ \|(\tilde{g},\tilde{d}) \|_{H^2(\R)\times\R}=\|\tilde{g}\|_{H^2(\R)}+|\tilde{d}|$.

Finally, we are able to apply the implicit function theorem to the function $G: H^2(\R)\times\R\times\R \to H^2(\R)\times\R$, and conclude that the function $D:\R\to H^2(\R)\times \R$ is continuously Fr\'echet differential in a neighborhood of $s=s_0$. Denote by $A_{s_0} \in \mathcal{L}(\R,H^2(\R)\times \R)$ the derivative operator of
$D(s)$ at $s=s_0$. By \eqref{fre-partial-s}, one checks that
$$A_{s_0}(\tilde{s})=-Q^{-1}_{s_0}\left(\partial_s G(0,c(f\cdot s_0),s_0)(\tilde{s})\right)=-\tilde{s}\,Q^{-1}_{s_0}\left(M^{-1}_{c(f\cdot s_0),s_0}\left(\partial_{\xi\xi}\phi(\cdot;s_0)+\partial_tf(s_0,\phi(\cdot;s_0))\right),0\right)$$
for all $\tilde{s}\in\R$. By a slight abuse of notation, we identify $A_{s_0}$ to an element in $H^2(\R)\times \R$, and then, it follows from \eqref{phi-bound-1}, \eqref{inver-M} and \eqref{esti-inverse} that
$$	\|A_{s_0}\|_{H^2(\R)\times\R} \leq C_4 $$
for some $C_4>0$ independent of $s_0\in\R$. Writing $A_{s_0}=(A_{s_0}^1,A_{s_0}^2) \in H^2(\R)\times\R$ for each $s_0\in\R$, one sees that $\partial_{s} \phi(\cdot;f\cdot s_0)=A_{s_0}^1$ and $c'(s_0)=A_{s_0}^2$, and hence,   $|c'(s_0)|\leq C_5$ and $\|\partial_{s} \phi(\cdot;f\cdot s_0)\|_{H^2(\R)}\leq C_5$. Since $s_0\in\R$ is arbitrary, the former immediately implies the $C^1$-smoothness of $s\mapsto c(f\cdot s)$; by the Sobolev inequality, the latter gives \eqref{esti-p-phi}. Furthermore, using standard elliptic estimates to the equation of $\partial_{s} \phi(\cdot;f\cdot s)$, one obtains that $(\xi,s) \mapsto \phi(\xi;f\cdot s)$ is of class $C^{2;1}_{\xi;s}(\R^2)$. The proof of Proposition \ref{smooth-s} is thus complete.
\end{proof}

%%%%%%%%%%%%%%%%%%%%%%%%%%%%%%%%%%%%%%%%
\subsection{Proof of Theorem \ref{th-slow}}

In addition to (A1), (A2), (A5), we assume that (A3) holds in this subsection. We will use the family of homogeneous waves $(\phi(\xi;f\cdot s),c(f\cdot s))_{s\in\R}$ to construct a pair of super-solution and sub-solution for
the following Cauchy problem 
\begin{equation}\label{cauchy-slow}
	\left\{\baa{ll}
	\displaystyle \partial_t w_T= \partial_{xx}w_T+ f\left(\frac{t}{T},w_T\right) & \hbox{for }\,\,t>0, \, x\in\R, \vspace{5pt}\\
	w_T(0,x)=\phi(x; f\cdot 0) & \hbox{for } \, x\in\R,
	\eaa \right.
\end{equation}
when $T$ is sufficiently large. As the arguments in this subsection does not involve with functions in $H(f)$, for convenience, we use $(\phi(\xi;s),c(s))$ to denote $(\phi(\xi;f\cdot s),c(f\cdot s))$ in the sequel. 

For each $T>0$, we define
\begin{equation*}%\label{de-XT}
	X_T(t)=\int_0^t c\left(\frac{\tau}{T}\right)d\tau\,\hbox{ for all }\, t\in\R.
\end{equation*}
Clearly, $X_T(0)=0$, and since $c(\cdot)$ is of class $C^1(\R)$ by Proposition \ref{smooth-s},  the function $t\mapsto X_T(t)$ is at least of class $C^2(\R)$. Furthermore, since the function $s\mapsto c(s)$ is almost periodic by Proposition \ref{uniform-almost}, it is easily seen that 
\begin{equation}\label{xT-t-lim}
\lim_{t\to \infty} \frac{X_T(t) }{t} =\lim_{t\to\infty} \frac{T}{t} \int_0^{t/T} c(s) ds =c_*.
\end{equation}

Let $\delta_1\in (0,\delta_0)$, $\gamma_1\in (0,\gamma_0)$ and $K_1>0$ be the constants provided by
\eqref{choose-gamma1} and \eqref{lip-cons}. The following lemma gives a super-solution of problem \eqref{cauchy-slow}. 

\begin{lem}\label{sup-slow}
	There exist $\epsilon_2\in (0,\delta_1/2)$ and $T_2=T_2(\epsilon_2)>0$ $($sufficiently large$)$ such that for any $T\in (T_2,+\infty)$, the function $w^+_T:[0,+\infty)\times\R$ defined by
	\begin{equation*}
		w^+_T(t,x)=\phi\left(x-X_T(t)+\kappa_T(t);\frac{t}{T}\right)+p_T(t)
	\end{equation*}	
	is a super-solution of \eqref{cauchy-slow} for $t\ge 0$ and $x\in\R$, where $p_T(\cdot)$ and $\kappa_T(\cdot)$ are $C^1([0,+\infty))$ functions satisfying
	\begin{equation}\label{require-pt}
		p_T(0)=\epsilon_2,\quad p_T'(t)<0< p_T(t) \, \hbox{ for all }\, t\geq 0,	
	\end{equation}
	and
	\begin{equation}\label{require-kappat}
		\kappa_T(0)=0,\quad  \kappa_T'(t)<0 \, \hbox{ for all }\, t\geq 0.	
	\end{equation}
\end{lem}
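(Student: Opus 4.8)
The plan is to verify directly that
$$\mathcal{N}(t,x):=\partial_t w^+_T(t,x)-\partial_{xx}w^+_T(t,x)-f\!\left(\tfrac{t}{T},w^+_T(t,x)\right)\ge 0\quad\hbox{for all }t\ge 0,\ x\in\R,$$
once $p_T$ and $\kappa_T$ are taken to be solutions of suitable linear ODEs. Write $s=t/T$ and $\xi=x-X_T(t)+\kappa_T(t)$, and recall that $X_T'(t)=c(t/T)=c(s)$ and that $(\phi(\cdot;s),c(s))$ solves $\partial_{\xi\xi}\phi(\xi;s)+c(s)\partial_{\xi}\phi(\xi;s)+f(s,\phi(\xi;s))=0$. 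A direct computation, using $-\partial_{\xi\xi}\phi(\xi;s)=c(s)\partial_{\xi}\phi(\xi;s)+f(s,\phi(\xi;s))$ to cancel the two terms carrying $c(s)$, yields
$$\mathcal{N}(t,x)=\kappa_T'(t)\,\partial_{\xi}\phi(\xi;s)+\frac{1}{T}\,\partial_s\phi(\xi;s)+p_T'(t)+f(s,\phi(\xi;s))-f\big(s,\phi(\xi;s)+p_T(t)\big).$$
The structural fact that makes the argument work is Proposition \ref{smooth-s}: there is a constant $C_0:=\sup_{\xi\in\R,\,s\in\R}|\partial_s\phi(\xi;s)|<+\infty$, independent of $T$, so the ``fast-time'' error term $\tfrac{1}{T}\partial_s\phi$ is bounded by $C_0/T$ uniformly and can be absorbed once $T$ is large.

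Next I would split into three regimes according to the location of $\xi$: (a) $\phi(\xi;s)\le\delta_1/2$, (b) $\phi(\xi;s)\ge1-\delta_1/2$, and (c) $\delta_1/2<\phi(\xi;s)<1-\delta_1/2$. In regimes (a) and (b), requiring $0<p_T(t)\le\epsilon_2<\delta_1/2$ places both $\phi(\xi;s)$ and $\phi(\xi;s)+p_T(t)$ in the ranges $[-\delta_1,\delta_1]$ or $[1-\delta_1,1+\delta_1]$ where \eqref{choose-gamma1} applies, whence $f(s,\phi)-f(s,\phi+p_T)\ge\gamma_1 p_T(t)$; since $\partial_{\xi}\phi<0$ and $\kappa_T'<0$ the term $\kappa_T'\partial_{\xi}\phi$ is nonnegative and can be dropped, giving $\mathcal{N}\ge p_T'(t)+\gamma_1 p_T(t)-C_0/T$. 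In regime (c), the Lipschitz estimate \eqref{lip-cons} gives $|f(s,\phi)-f(s,\phi+p_T)|\le K_1 p_T(t)$, while Proposition \ref{uniform-decay}(ii) with $\delta=\delta_1/2$ furnishes a constant $\beta=\beta(\delta_1/2)>0$, independent of $s$, with $\partial_{\xi}\phi(\xi;s)\le-\beta$; since $\kappa_T'<0$ this yields $\kappa_T'(t)\partial_{\xi}\phi(\xi;s)\ge-\beta\kappa_T'(t)$, hence $\mathcal{N}\ge-\beta\kappa_T'(t)+p_T'(t)-K_1 p_T(t)-C_0/T$.

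These two inequalities dictate the choices: fix any $\epsilon_2\in(0,\delta_1/2)$, let $p_T$ solve $p_T'(t)+\gamma_1 p_T(t)=C_0/T$ with $p_T(0)=\epsilon_2$, that is
$$p_T(t)=\frac{C_0}{\gamma_1 T}+\left(\epsilon_2-\frac{C_0}{\gamma_1 T}\right)\me^{-\gamma_1 t}\quad\hbox{for }t\ge 0,$$
and let $\kappa_T$ solve $-\beta\kappa_T'(t)+p_T'(t)-K_1 p_T(t)-C_0/T=0$ with $\kappa_T(0)=0$, i.e. $\kappa_T'(t)=\beta^{-1}\big(p_T'(t)-K_1 p_T(t)-C_0/T\big)$. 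Set $T_2:=C_0/(\gamma_1\epsilon_2)$. For $T>T_2$ one checks that $0<C_0/(\gamma_1 T)\le p_T(t)\le\epsilon_2<\delta_1/2$ and $p_T'(t)=-\gamma_1(\epsilon_2-C_0/(\gamma_1 T))\me^{-\gamma_1 t}<0$, so \eqref{require-pt} holds; and since $p_T'<0$, $p_T>0$ and $C_0/T>0$ force $p_T'-K_1 p_T-C_0/T<0$, we get $\kappa_T'<0$, so \eqref{require-kappat} holds (and $\kappa_T\in C^1$, indeed $C^\infty$). With these choices $\mathcal{N}\ge 0$ holds in all three regimes, and the proof would be complete; the symmetric construction of the sub-solution is analogous and I would omit the details.

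\textbf{Main obstacle.} The only genuine difficulty is the uniform-in-$(s,T)$ control of the correction terms, and this has already been isolated in Section 4.1: the bound $\sup_{\xi,s}|\partial_s\phi(\xi;s)|<+\infty$ from Proposition \ref{smooth-s} (so that the $1/T$ term is harmless) and the lower bound $|\partial_{\xi}\phi(\xi;s)|\ge\beta$ on the intermediate zone, uniform in $s$, from Proposition \ref{uniform-decay}(ii) (so that $\kappa_T$ can steer $\mathcal{N}$ back to nonnegativity there). Granting these, everything reduces to the elementary ODE bookkeeping above; one should only double-check that the three regimes are mutually compatible on their overlaps, which is immediate since in regimes (a) and (b) the term $\kappa_T'\partial_{\xi}\phi$ is nonnegative and simply discarded.
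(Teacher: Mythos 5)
Your proof is correct and follows essentially the same strategy as the paper: cancel the $c(s)$ terms using the profile equation, bound the $T^{-1}\partial_s\phi$ remainder via Proposition \ref{smooth-s}, and then split into tail regions (handled by the linear stability bound \eqref{choose-gamma1}) and a middle region (handled by the Lipschitz estimate \eqref{lip-cons} together with the uniform slope bound from Proposition \ref{uniform-decay}(ii)), closing with the same linear ODEs for $p_T$ and $\kappa_T$. The only difference is cosmetic: you partition by the value of $\phi(\xi;s)$ rather than by $\xi$ relative to a threshold $C_1$, which lets you fix $\epsilon_2$ arbitrarily in $(0,\delta_1/2)$ instead of tying it to a uniform lower bound for $\phi$ on $[-C_1,C_1]\times\R$ as the paper does.
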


\begin{proof}
	By Proposition \ref{uniform-decay} (i), there exists $C_1>0$ such that for all $(\xi,s)\in \R\times\R$,
	\begin{equation}\label{slow-C1}
		\left\{\baa{ll}
		\displaystyle 0< \phi(\xi;s)\leq \delta_1/2 & \hbox{ if } \,\,\xi\geq C_1,\vspace{5pt}\\
		\displaystyle 1-\delta_1/2 \leq \phi(\xi;s)<1 & \hbox{ if }\,\,\xi\leq -C_1.
		\eaa \right.
	\end{equation}
    Since the function $\phi:\R^2\to (0,1)$ is continuous (see Proposition \ref{continuity}), and decreasing in the first variable and almost periodic with respect to the second variable (see Proposition \ref{uniform-almost}), it further follows that there exists $\epsilon_2\in (0,\delta_1/2] \subset (0,1/4)$ such that
    \begin{equation*}
     2\epsilon_2 \leq  \phi(\xi;s) \leq 1-2\epsilon_2\,\hbox{ for all }  -C_1\leq \xi \leq C_1,\, s\in\R.
    \end{equation*}
    Similarly to the strategy of the proof of Lemma \ref{sup-rapid}, to prove $w^+_T(t,x)$ is a super-solution,
    we will find $C^1([0,+\infty))$ functions $p_T$ and $\kappa_T$  satisfying \eqref{require-pt} and \eqref{require-kappat} respectively, such that if $T>0$ is sufficiently large, there holds
	\begin{equation*}
		N(t,x):= \partial_t w^+_T(t,x)-\partial_{xx} w^+_T(t,x) -f\left( \frac{t}{T}, w^+_T(t,x) \right) \geq 0
	\end{equation*}
	for all $(t,x)\in [0,+\infty)\times\R$. Since for each $s\in\R$, $(\phi(\xi;s), c(s))$ is a solution of
	\eqref{eq-fs}, it is straightforward to check that for any $T>0$ and any $C^1([0,+\infty))$ functions $q_T$ and $\eta_T$, one has
	$$	N(t,x)=
	\kappa_T'(t) \partial_{\xi}\phi\left(\xi;\frac{t}{T}\right)+p_T'(t)+\frac{1}{T}\partial_{s}\phi\left(\xi;\frac{t}{T}\right)+f\left(\frac{t}{T},\phi\right)-f\left(\frac{t}{T},\phi+p_T\right) $$
	for all $(t,x)\in [0,\infty)\times\R$, where $\xi=x-X_T(t)+\kappa_T(t)$, and $\partial_{\xi} \phi$, $\partial_{s} \phi$  stand for the partial derivatives of the function $\phi$ with respect to the first variable and the second variable, respectively. Thanks to Proposition \ref{smooth-s}, there is a constant $C_2>0$ such that for all $T>0$ and $(t,x)\in [0,+\infty)$,
	\begin{equation}\label{slow-ntx}	
		N(t,x)\geq   -\frac{C_2}{T}+
		\kappa_T'(t) \partial_{\xi}\phi\left(\xi;\frac{t}{T}\right)+p_T'(t)+f\left(\frac{t}{T},\phi\right)-f\left(\frac{t}{T},\phi+p_T\right).
	\end{equation}

    Let us first find a suitable function $p_T$ satisfying \eqref{require-pt} such that $N(t,x)\geq 0$ for $(t,x)\in [0,+\infty)$ such that $x-X_T(t)+\kappa_T(t)\geq C_1$. Indeed, notice that $p_T$ is required to satisfy $0<p_T(t)\leq \epsilon_2$ for all $t\geq 0$. It follows from  \eqref{slow-C1} that $0<\phi(\xi;t/T) \leq \delta_1/2$ and $0<\phi(\xi;t/T)+q_T(t) \leq \epsilon_2+\delta_1/2\leq \delta_1$ for $(t,x)\in [0,+\infty)$ with $\xi=x-X_T(t)+\kappa_T(t)\geq C_1$, whence by \eqref{choose-gamma1}, there holds
	$$f\left(\frac{t}{T},\phi\right)-f\left(\frac{t}{T},\phi+p_T\right)   \geq \gamma_1p_T(t).$$
	Since  $\kappa_T(t)$ is required to be decreasing in $t$ and since $\phi$ is decreasing in its first variable, it then follows from \eqref{slow-ntx} that
	$$N(t,x) \geq  -\frac{C_2}{T}+  p_T'(t)+\gamma_1p_T(t).$$
    We choose
    \begin{equation}\label{function-pt}
    	p_T(t)=\frac{C_2}{T\gamma_1}+\left(\epsilon_2-\frac{C_2}{T\gamma_1}\right) \me^{-\gamma_1t}\, \hbox{ for } \, t\geq 0.
    \end{equation}
    It is clear that
    \begin{equation*}
    	p_T'(t)+\gamma_1p_T(t)-\frac{C_2}{T}=0 \,\hbox{ for }\, t> 0,\quad\hbox{and}\quad  p_T(0)=\epsilon_2,
    \end{equation*}
    and that for each $T>T_2$, $p_T(t)$ satisfies \eqref{require-pt},  where $T_2>0$ is given by
	\begin{equation*}%\label{choose-T2}
		T_2=\frac{C_2 }{\epsilon_2 \gamma_1}.
	\end{equation*}
    Therefore, we obtain that for any $T>T_2$,  $N(t,x)\geq 0$ for all $[t,x)\in (0,\infty)\times\R$ with $x-X_T(t)+\kappa_T(t)\geq C_1$, provided that $\kappa_T$ satisfies \eqref{require-kappat}.
	
	In a similar way, with $p_T(t)$ given by \eqref{function-pt} and $\kappa_T$ required to satisfy \eqref{require-kappat}, one can prove that  for any $T>T_2$,  $N(t,x)\geq 0$ for all $[t,x)\in (0,\infty)\times\R$ with $x-X_T(t)+\kappa_T(t)\leq -C_1$.

	Finally, we find a suitable function $\kappa_T$ such that $N(t,x)\geq 0$ in the remaining region $(t,x)\in [0,\infty)\times\R$ such that $-C_1\leq x-X_T(t)+\kappa_T(t)\leq C_1$. In this case, by \eqref{slow-C1}, we have $2\epsilon_2 \leq  \phi(\xi;t/T) \leq 1-2\epsilon_2$ and $2\epsilon_2 \leq  \phi(\xi;t/T)+p_T(t)\leq 1-\epsilon_2$. It then follows from Proposition \ref{uniform-decay} (ii) that there exists $\beta_1>0$ (independent of $T$, $t$ and $x$) such that $\partial_{\xi} \phi\left(\xi; t/T \right) \leq -\beta_1$.
	Moreover, due to \eqref{lip-cons}, one has
	$$\left|f\left(\frac{t}{T},\phi\right)-f\left(\frac{t}{T},\phi+p_T\right)\right| \leq K_1p_T(t). $$
	Therefore, with $\kappa_T$ required to satisfy \eqref{require-kappat}, it follows from \eqref{slow-ntx} that
	$$N(t,x)\geq  -\frac{C_2}{T}-\beta_1\kappa_T'(t)+p_T'(t)-K_1p_T(t).   $$	
	With $p_T(t)$ given by \eqref{function-qt}, we then choose $\kappa_T(t)$ such that
	$$ -\frac{C_2}{T}-\beta_1\kappa_T'(t)+p_T'(t)-K_1p_T(t) =0,\quad\hbox{and}\quad \eta_T(0)=0.$$	
	Namely, we have
	\begin{equation}\label{function-kappa}
		\kappa_T(t)=-\frac{\gamma_1+K_1}{\beta_1\gamma_1}\left(\epsilon_2-\frac{C_2}{\gamma_1T}\right)\left(1-\me^{-\gamma_1t}\right)-\frac{(\gamma_1+K_1)C_2}{\beta_1\gamma_1T}t \, \hbox{ for }\,\, t\geq 0.
	\end{equation}
    It is then easily checked that for any $T>T_2$, $\kappa_T(t)$ satisfies \eqref{require-kappat} and that $N(t,x)\geq 0$ for all  $(t,x)\in [0,\infty)\times\R$ such that $-C_1\leq x-X_T(t)+\kappa_T(t)\leq C_1$.
	
	Combining the above, one can conclude that for any $T>T_2$, $w^+_T(t,x)$ is a super-solution of \eqref{cauchy-slow} for $t\ge 0$ and $x\in\R$. This completes the proof of Lemma \ref{sup-slow}.
\end{proof}

Similarly to  Lemma \ref{sup-slow}, we have the following sub-solution for problem \eqref{cauchy-slow}.

\begin{lem}\label{sub-slow}
	Let $\epsilon_2\in (0,\delta_1/2)$ and $T_2=T_2(\epsilon_2)>0$ be provided by Lemma~$\ref{sup-rapid}$. Then, for every $T>T_2$, the function $w^-_{T}:[0,+\infty)\times\R\to\R$ defined by
	$$w^-_{T}(t,x)=\phi\left(x-X_T(t)-\kappa_T(t);\frac{t}{T}\right)-p_T(t)$$
	is a sub-solution of \eqref{cauchy-slow} for $t\ge 0$ and $x\in\R$, where $p_{T}$ and $\kappa_{T}$ are $C^1([0,+\infty))$ functions given by \eqref{function-pt} and \eqref{function-kappa}, respectively.
\end{lem}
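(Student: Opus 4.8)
\noindent\textbf{Proof proposal for Lemma~\ref{sub-slow}.} The plan is to verify, following the proof of Lemma~\ref{sup-slow} line by line but with all inequalities reversed, that
$$N(t,x):=\partial_t w^-_T(t,x)-\partial_{xx}w^-_T(t,x)-f\!\left(\tfrac tT,w^-_T(t,x)\right)\le 0\qquad\hbox{for all }(t,x)\in[0,+\infty)\times\R.$$
First I would compute $N$. Writing $\xi=x-X_T(t)-\kappa_T(t)$ and using $X_T'(t)=c(t/T)$ together with the fact that $(\phi(\cdot;t/T),c(t/T))$ solves \eqref{eq-fs} (so that $\partial_{\xi\xi}\phi$ and $c(t/T)\partial_\xi\phi$ cancel against $f(t/T,\phi)$), one gets
$$N(t,x)=-\kappa_T'(t)\,\partial_\xi\phi\!\left(\xi;\tfrac tT\right)+\frac1T\,\partial_s\phi\!\left(\xi;\tfrac tT\right)-p_T'(t)+f\!\left(\tfrac tT,\phi\right)-f\!\left(\tfrac tT,\phi-p_T\right).$$
By Proposition~\ref{smooth-s} the term $\frac1T\partial_s\phi$ is bounded above by $C_2/T$, with the same constant $C_2$ as in the proof of Lemma~\ref{sup-slow}, so $N(t,x)\le \frac{C_2}{T}-\kappa_T'(t)\partial_\xi\phi-p_T'(t)+f(t/T,\phi)-f(t/T,\phi-p_T)$. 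As in Lemma~\ref{sup-slow}, the restriction $T>T_2$ ensures that the explicit functions $p_T$ and $\kappa_T$ from \eqref{function-pt} and \eqref{function-kappa} satisfy \eqref{require-pt} and \eqref{require-kappat}; in particular $0<p_T(t)\le\epsilon_2\le\delta_1/2$, $p_T'(t)<0$ and $\kappa_T'(t)<0$ for all $t\ge0$.

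Next I would split $[0,+\infty)\times\R$ into the three regions $\xi\ge C_1$, $\xi\le-C_1$ and $-C_1\le\xi\le C_1$ used in Lemma~\ref{sup-slow}. In the first region $0<\phi(\xi;t/T)\le\delta_1/2$, and since $0<p_T(t)\le\delta_1/2$ both $\phi$ and $\phi-p_T$ lie in $[-\delta_1,\delta_1]$; by the mean value theorem and \eqref{choose-gamma1} this yields $f(t/T,\phi)-f(t/T,\phi-p_T)\le-\gamma_1 p_T(t)$. Because $\kappa_T'(t)<0$ and $\partial_\xi\phi<0$, the term $-\kappa_T'(t)\partial_\xi\phi$ is negative, so $N(t,x)\le\frac{C_2}{T}-p_T'(t)-\gamma_1 p_T(t)=0$ by the ODE $p_T'+\gamma_1 p_T=C_2/T$ satisfied by \eqref{function-pt}. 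The region $\xi\le-C_1$ is handled identically, now using $1-\delta_1\le\phi-p_T<1+\delta_1$ together with the second line of \eqref{choose-gamma1}. In the middle region one has $2\epsilon_2\le\phi(\xi;t/T)\le1-2\epsilon_2$, hence $\epsilon_2\le\phi-p_T\le1-2\epsilon_2$, and Proposition~\ref{uniform-decay}(ii) provides $\beta_1>0$, independent of $T$, $t$, $x$, with $\partial_\xi\phi(\xi;t/T)\le-\beta_1$; multiplying by $-\kappa_T'(t)>0$ gives $-\kappa_T'(t)\partial_\xi\phi\le\beta_1\kappa_T'(t)$. Combined with $|f(t/T,\phi)-f(t/T,\phi-p_T)|\le K_1 p_T(t)$ from \eqref{lip-cons}, this gives $N(t,x)\le\frac{C_2}{T}+\beta_1\kappa_T'(t)-p_T'(t)+K_1 p_T(t)=0$, where the last equality is exactly the defining relation $-\frac{C_2}{T}-\beta_1\kappa_T'(t)+p_T'(t)-K_1 p_T(t)=0$ of \eqref{function-kappa}. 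Combining the three cases shows $N\le0$ on $[0,+\infty)\times\R$, so $w^-_T$ is a sub-solution of \eqref{cauchy-slow}.

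Since the argument is the mirror image of the proof of Lemma~\ref{sup-slow}, I do not expect any genuine obstacle. The only point requiring care is the bookkeeping of signs — the sub-solution needs $N\le0$ where the super-solution needed $N\ge0$, and $\kappa_T$ enters $w^-_T$ with the opposite sign — together with the elementary verification that $\phi-p_T$ never leaves the intervals $[-\delta_1,\delta_1]$ and $[1-\delta_1,1+\delta_1]$ on which \eqref{choose-gamma1} is available; this is guaranteed precisely by $0<p_T\le\epsilon_2\le\delta_1/2$. In the write-up I would therefore state this verification briefly and refer to the proof of Lemma~\ref{sup-slow} for the routine computations, in the same style already used for Lemma~\ref{sub-rapid}.
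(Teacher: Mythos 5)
Your proof is correct and is exactly the mirror-image argument the paper has in mind (the paper omits it, referring to Lemma~\ref{sup-slow} in the same way Lemma~\ref{sub-rapid} refers to Lemma~\ref{sup-rapid}). The computation of $N$, the bound $\frac{1}{T}\partial_s\phi\le C_2/T$ from Proposition~\ref{smooth-s}, the three-region split, the sign bookkeeping for $-\kappa_T'(t)\,\partial_\xi\phi$, and the verification that $\phi-p_T$ stays in $[-\delta_1,\delta_1]$ (resp.\ $[1-\delta_1,1+\delta_1]$, resp.\ $[-1,2]$ for \eqref{lip-cons}) all check out, and the two ODEs defining $p_T$ and $\kappa_T$ close the inequalities exactly as intended.
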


We are now ready to give the

\begin{proof}[Proof of Theorem $\ref{th-slow}$]
The proof follows the main lines as those of Theorem \ref{restate-th}. For clarity, we give its outline here.
Let $\epsilon_2\in (0,\delta_1/2)$ and $T_2=T_2(\epsilon_2)>0$ be as in Lemmas \ref{sup-slow}-\ref{sub-slow}, and let $T>T_2$ be arbitrary. By the comparison principle for parabolic equations, we have
\begin{equation*}%\label{sup-sub-wt}
	w_T^-(t,x)\leq  w_T(t,x)\leq w_T^+(t,x)\,\hbox{ for all }\, t\ge 0,\, x\in\R,
\end{equation*}
where $w_T(t,x)$ is the solution of the Cauchy problem \eqref{cauchy-slow}, and $w_T^-(t,x)$, $w_T^+(t,x)$ are, respectively, the sub- and super-solution provided by Lemmas \ref{sup-slow}-\ref{sub-slow}. On the other hand,
since the almost periodic traveling wave solution $U_T(t,x-c_T(t))$ is globally stable (see Theorem \ref{th-known} (ii)), it follows that
\begin{equation*}
	\sup_{x\in\R} \left|w_T(t,x)-U_T(t,x-c_T(t)-x^*_T)\right|\to 0 \, \hbox{ as }\, t\to+\infty,
\end{equation*}
for some constant $x^*_T\in\R$. Combining the above and setting $x:=x(t)=c_T(t)+x_T^*$, one finds some large $\tau_0>0$ such that
$$w_T^-(t,c_T(t)+x_T^*)-\frac{\epsilon_2}{4}\leq U_T(t,0) \leq w_T^+(t,c_T(t)+x_T^*)+\frac{\epsilon_2}{4} \, \hbox{ for all }\, t\geq \tau_0. $$
Then, by the normalization condition \eqref{normal}, and the fact $0<p_T(t)\leq p_T(0)=\epsilon_2$ for $t\geq 0$ (due to \eqref{require-pt}), this implies that
\begin{equation*}
	\phi\left(c_T(t)+x_T^*-X_T(t)-\kappa_T(t);\frac{t}{T}\right)-\frac{5}{4}\epsilon_2 \leq \frac{1}{2} \leq \phi\left(c_T(t)+x_T^*-X_T(t)+\kappa_T(t);\frac{t}{T}\right)+\frac{5}{4}\epsilon_2
\end{equation*}
for all $t\geq \tau_0$. Since $0<\epsilon_2<1/4$ and since $\lim_{x\to+\infty}\phi(x;s)=0$ and $\lim_{x\to-\infty}\phi(x;s)=1$ uniformly in $s\in\R$ by Proposition \ref{uniform-decay}, it follows that there exists a constant $z_0>0$ independent of $T$ such that
$$\left|c_T(t)+x_T^*-X_T(t)\right| \leq |\kappa_T(t)|+z_0 \,\hbox{ for all }\,\, t\geq \tau_0.  $$
Multiplying both sides of the above inequality by $1/t$ yields that
\begin{equation}\label{slow-ct-cs}
	\left|\frac{1}{t}\int_0^t c'_T(\tau)d\tau-\frac{T}{t}\int_0^{t/T} c(\tau)d\tau \right| \leq \frac{|x_T^*|+|c_T(0)|+z_0}{t}+\frac{|\kappa_T(t)|}{t}\,\hbox{ for all }\, t\geq \tau_0.
\end{equation}
Furthermore, it is easily seen from \eqref{function-kappa} that
$$\lim_{t\to+\infty}\frac{|\kappa_T(t)|}{t}= \frac{(\gamma_1+K_1)C_2}{\beta_1\gamma_1T}.$$
Therefore, setting $M_2=(\gamma_1+K_1)C_2/(\beta_1\gamma_1) $ and passing to the limits as $t\to+\infty$ in both sides of \eqref{slow-ct-cs}, one obtains from \eqref{xT-t-lim} that $|\bar{c}_T-c_*|\leq M_2/T$. Since $T>T_2$ is arbitrary, this completes the proof of Theorem \ref{th-slow}.
\end{proof}

\noindent{\bf Acknowledgements} This work has received funding from NSFC and the Basic and Applied Basic Research Foundation of Guangdong Province.

%\section*{Declarations}

%{\bf Conflict of interest} The author states that there is no conflict of interest.

%\vskip 0.2cm 

%\noindent{\bf Data availability} Data sharing is not applicable to this article as no datasets were generated or analyzed during the current study.

%%%%%%%%%%%%%%%%%%%%%%%%%%%%%%%%%%%%%%%%
%%%%%%%%%%%%%%%%%%%%%%%%%%%%%%%%%%%%%%%%

\end{document}